\newtheorem{prop}{Proposition}
\newtheorem{lemma}{Lemma}
\newtheorem{definition}{Definition}
\newtheorem{thm}{Theorem}
\newcommand{\eproof}{\begin{flushright} $\square$ \end{flushright}}
\let\eproof\endproof 
\newcommand{\tr}{\mathop{\fam0 Tr}\nolimits}
\newcommand{\ch}{\mathop{\fam0 ch}\nolimits}
\newcommand{\Lie}[1]{\mbox{$\mathfrak #1$}}
\newcommand{\Td}{\mathop{\fam0 Td}\nolimits}
\newcommand{\C}{C}
\newcommand{\bX}{{\mathbf X}}
\newcommand{\Z}{{\mathbb Z}}
\newcommand{\ra}{\mathop{\fam0 \rightarrow}\nolimits}
\renewcommand{\epsilon}{\varepsilon}
\newcommand{\frakg}{\mathfrak{g}}
\newcommand{\bbC}{\mathbb{C}}
\newcommand{\ind}{\text{ind}}
\newcommand{\calE}{\mathcal{E}}
\DeclareMathOperator{\Tr}{Tr}
\DeclareMathOperator{\rk}{rk}
\def\XXint#1#2#3{{\setbox0=\hbox{$#1{#2#3}{\int}$}
\vcenter{\hbox{$#2#3$}}\kern-.5\wd0}}
\newcommand{\Sigmao}{\Sigma^\circ}
\def\tilde{\widetilde}
\def\bar{\overline}
\def\be{\begin{eqnarray}}
\def\ee{\end{eqnarray}}
\def\CN{{\mathcal N}}
\def\CL{{\mathcal L}}
\def\tr{{\mathrm{tr}\,}}
\def\ch{{\mathrm{ch}}}
\def\Td{{\mathrm{Td}}}
\renewcommand{\C}{{\mathbb C}}
\def\ind{\mathrm{Ind}}
\def\^{{\wedge}}
\newcommand{\cp}{{\mathbb{P}}}
\begin{document}

\title[The Verlinde formula for Higgs bundles]{The Verlinde formula for\\ Higgs bundles}

\date{\today}
\author{Jørgen Ellegaard Andersen}
\address{Center for Quantum Geometry of Moduli
  Spaces\\ 
  Department of Mathematics\\
  University of Aarhus\\
  DK-8000, Denmark}
	\email{andersen@qgm.au.dk}
  \author{Sergei Gukov}
\address{Walter Burke Institute for Theoretical Physics\\
 California Institute of Technology\\
Pasadena, CA 91125, USA\\
\newline \phantom{bb}
Max-Planck-Institut f\"ur Mathematik, \\ Vivatsgasse 7, D-53111 Bonn, Germany
}
\email{gukov@theory.caltech.edu}
\author{Du Pei}
\address{Walter Burke Institute for Theoretical Physics\\
 California Institute of Technology\\
Pasadena, CA 91125, USA\\ \newline \phantom{bb}
Center for Quantum Geometry of Moduli
  Spaces\\ 
  Deparment of Mathematics\\
  University of Aarhus\\
  DK-8000, Denmark}
\email{pei@caltech.edu}

\begin{abstract}
We propose and prove the Verlinde formula for the quantization of the Higgs bundle moduli spaces and stacks for any simple and simply-connected reductive group $G$. This generalizes the equivariant Verlinde formula for the case of $G={\rm SL}(n,{\mathbb C})$ proposed in \cite{GP}. We further establish a Verlinde formula for the quantization of parabolic Higgs bundle moduli spaces and stacks. Finally, we prove that these dimensions form a one-parameter family of $1+1$-dimensional TQFT, uniquely classified by the complex Verlinde algebra, which is a one-parameter family of Frobenius algebras. We construct this one-parameter family of Frobenius algebras as a deformation of the classical Verlinde algebra for $G$.
\end{abstract}

\thanks{Supported in part by the center of excellence grant ``Center for Quantum Geometry of Moduli Space" from the Danish National Research Foundation (DNRF95), by the Walter Burke Institute for Theoretical Physics, and by the U.S.~Department of Energy, Office of Science, Office of High Energy Physics, under Award Number DE-SC0011632.}

\maketitle

\section{Introduction}

Let $G$ be a simple and simply-connected reductive group over the complex numbers and $K$ its compact form.
Let $M$ be the moduli space of semi-stable holomorphic $G$-bundles on $\Sigma$, where $\Sigma$ is a smooth algebraic curve over the complex numbers. Let $L$ be the determinant bundle over $M$.
In order to recall the Verlinde formula for the dimension of the space of holomorphic sections of $L^k$ over $M$ for any non-negative integer level $k$, we specify some notations.

Let $T\subset G$ be the maximal torus and $W$ the Weyl group. 
We now consider the killing form $\langle\cdot,\cdot\rangle$ on ${\mathfrak g} = \mathrm{Lie}(G)$, normalized so that the longest root has norm squared equal to $2$. Let ${\mathfrak t} = \mathrm{Lie}(T)$. Pick a level $k\in {\mathbb Z}_+$ and consider the map
$$i_k : {\mathfrak t} \ra {\mathfrak t}^*,$$
given by
$$ i_k(\xi)(\eta) = (k+h)\langle\xi,\eta\rangle,$$
where $h$ is the dual Coxeter number. This descends to a map 
$$ \chi : T \ra T^{*}.$$
Let 
$$ F = \ker \chi.$$
Let $ \rho = \frac12 \sum_{\alpha\in \frak{R}_+} \alpha$, where $\frak{R}_+$ is the set of positive roots of $\frakg$.
Put
$$ F_\rho = \chi^{-1}(e^{2\pi i \rho}).$$ As usual we denote by
$$\Delta = \prod_{\alpha\in \frak{R}_+} 2 \sin\left(\frac{i\alpha}{2}\right)$$
the Weyl denominator. Define the following function
$$ \theta(f) = \frac{\Delta(f)^2}{|F|}$$
for $f\in T$. 

\begin{thm}[Szenes, Donaldson, Bertram-Szenes, Daskalopoulos-Wentworth,\\ Thaddeus, Beauville-Laszlo, Kumar-Narasimhan-Ramanathan, Pauly,  Faltings, \\ Teleman, Teleman-Woodward]\label{thm:CVF} 
\mbox{ }\\ For all non-negative integers $k$ we have that
$$\dim H^0(M, L^k) = \sum_{f\in F^{\rm reg}_{\rho} / W} \theta(f)^{1-g}.$$
\end{thm}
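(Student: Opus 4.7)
The plan is to reduce the theorem to the Verlinde formula for spaces of conformal blocks and then extract the closed-form expression from the resulting $1+1$-dimensional topological quantum field theory; this is the route pursued by a large portion of the cited literature, while the remaining cited proofs proceed by residue or localization techniques and would serve as alternatives. The first step is to identify $H^0(M, L^k)$ with the space $V^k_\Sigma$ of conformal blocks of the WZW model for $G$ at level $k$ on $\Sigma$, via the theorems of Beauville--Laszlo, Faltings, Kumar--Narasimhan--Ramanathan and Pauly. Concretely, one realizes $M$ as a double quotient of a loop-group homogeneous space and pulls $L^k$ back to a $\hat{\frakg}_k$-equivariant line bundle whose global sections are dual to a vacuum representation, converting the problem into a purely representation-theoretic question about integrable representations of the affine Lie algebra $\hat{\frakg}_k$.

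The second step is to upgrade $\Sigma \mapsto V^k_\Sigma$ to a modular functor by invoking the Tsuchiya--Ueno--Yamada sheaf of conformal blocks over $\overline{\mathcal{M}}_{g,n}$ together with the factorization isomorphism at a node. The assignment $g \mapsto \dim V^k_{\Sigma_g}$ then becomes the partition function of a $1+1$-dimensional TQFT whose underlying commutative Frobenius algebra $R_k(G)$ has a basis indexed by level-$k$ integrable dominant weights $P_+^k$, multiplication given by the WZW fusion product, and Frobenius trace $\tau(\lambda) = \delta_{\lambda,0}$. For any commutative semisimple Frobenius algebra over $\bC$ with orthogonal idempotents $\{e_\alpha\}$ and eigenvalues $\lambda_\alpha = \tau(e_\alpha)$, a general 2D TQFT identity gives $Z(\Sigma_g) = \sum_\alpha \lambda_\alpha^{1-g}$, so the remaining task is to identify the idempotents and eigenvalues.

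The third step is to diagonalize $R_k(G) \otimes \bC$. Evaluation of Weyl characters at the Verlinde points $f_\mu = \exp\bigl(2\pi i (\mu+\rho)/(k+h)\bigr)$ yields a ring isomorphism $R_k(G) \otimes \bC \cong \bC^{F^{\rm reg}_\rho/W}$ sending delta functions to the orthogonal idempotents, and the Kac--Peterson formula for the modular $S$-matrix identifies the Frobenius eigenvalue at $f$ with $\theta(f) = \Delta(f)^2/|F|$. Substituting into the TQFT identity above gives the asserted formula. The main obstacle is the second step: establishing factorization of conformal blocks along nodes and verifying that it realizes the representation-theoretic fusion product on $R_k(G)$, which requires controlling conformal blocks near the boundary strata of $\overline{\mathcal{M}}_{g,n}$ together with flatness of the TUY connection. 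This is precisely where the bulk of the cited work is concentrated; once factorization is in hand, the remainder of the argument reduces to the Kac--Weyl character formula.
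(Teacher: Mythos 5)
This theorem is quoted in the paper as a known result: the paper supplies no proof of its own, only a survey of the literature, so the comparison is against the routes it cites. Your outline is a correct reconstruction of the conformal-block proof that the paper singles out as the main general-$G$ argument: the identification $H^0(M,L^k)\cong V^k_\Sigma$ via loop-group uniformization \cite{BL,KNR} (pushed to the generality of simply-connected $G$ in \cite{Te1/2} and \cite{F}), the factorization theorem of \cite{TUY} which turns $\dim V^k_\Sigma$ into the partition function of a $1+1$-dimensional TQFT governed by the fusion ring, and the diagonalization of that ring at the points of $F^{\rm reg}_\rho/W$ with Frobenius eigenvalues $\theta(f)=\Delta(f)^2/|F|$ via the Kac--Peterson $S$-matrix; you also correctly identify factorization and flatness of the TUY connection as the locus of the real work. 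The one route you mention only in passing, and which is the one this paper actually builds on for its new results, is the Teleman--Woodward index computation \cite{TW}: there one works directly on the moduli stack $\frak{M}$, evaluates the index of $L^k$ by the fixed-point formula of Theorem \ref{TWInd} at $\vec s=0$ (where $\det H^\dagger_{\vec 0}=1$ in the normalized metric, so $\theta_{\vec 0}=\theta$), and combines this with higher-cohomology vanishing and the identification of stack sections with moduli-space sections. Both are complete proofs; the conformal-block route yields the modular functor structure, while the index route is the one that deforms to the Higgs-bundle setting of Theorem \ref{thm:Main1}.
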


The earliest proof of this theorem are in the special cases of $G=\rm{SL}(2,{\mathbb C})$ \cite{Sz1, D, Sz2, BS, Th, DW1,DW2}, where \cite{DW1,DW2} also coveres the parabolic case. A different kind of proof is found in \cite{BL, KNR}, which established an isomorphism  between this space of holomorphic sections and the space of conformal blocks, whose dimension had already been proven to be given by the Verlinde formula in \cite{TUY}. This work was then extended to the parabolic case in \cite{CP} covering $G= \rm{SL}(n,{\mathbb C})$ case and in \cite{Te1/2} covering the generality stated here and further in \cite{F} to an even more general setting. Finally the index theorem of Teleman and Woodward \cite{TW} gives a direct index computation proof.

In order to generalize this formula to the case of the Higgs bundle moduli spaces, we consider the ${\mathbb C}^*$-action this moduli space has, which provides a grading on the infinite-dimensional quantization of this moduli space.

Let $M_H$ be the moduli space of semi-stable $G$-Higgs bundles on $\Sigma$.  By abuse of notation, we also let $L$ be the determinant line bundle over $M_H$.
 The ${\mathbb C}^*$-action on $M_H$ naturally lifts to a ${\mathbb C}^*$-action on $L$. We therefore get a decomposition
$$ H^0(M_H, L^k) \cong \bigoplus_{n=0}^\infty  H_n^0(M_H, L^k),$$
into finite-dimensional subspaces, where ${\mathbb C}^*$ acts on the $n$'th one by the character of the $n$ tensor power of the identity representation of ${\mathbb C}^*$ on ${\mathbb C}$.
We can therefore define the formal sum in the variable $t$ by
$$ \dim_t H^0(M_H, L^k) = \sum_{n=0}^\infty \dim H_n^0(M_H, L^k) t^n$$
and we denote it the {\em Hitchin character}.

Following now \cite{TW}, consider the following deformation 
$$\chi_t = \chi \prod_{\alpha\in \frak{R}_+} \left(\frac{1-te^\alpha}{1-te^{-\alpha}}\right)^\alpha : T \ra T^*.$$
Let
$$F_{\rho,t} = \chi_t^{-1} (e^{2\pi i \rho})\subset T.$$
Further consider the following function defined on $T$
$$\theta_t = (1-t)^{\mathrm{rk} G}\frac{1}{|F|} \frac{ \prod_{\alpha}(1-e^\alpha)(1-te^\alpha)}{\det H_t^\dagger},$$
where the product is taken over all of the roots of $\frak{g}$ and $H_t^\dagger$ is the endomorphism of the Cartan subalgebra $\Lie{t}$ of $\frak{g}$ corresponding to the Hessian $H_t$ of the function 
\be\label{FunD}
D_t(\xi) = \frac12(k+h)\langle\xi,\xi\rangle - \tr_\frakg ({\rm Li}_2(te^\xi))+\langle\rho,\xi\rangle.
\ee

In this paper we prove the following {\em Verlinde formula for Higgs bundles}.

\begin{thm}\label{thm:Main1}
For all $g>1$ and all non-negative integers $k$ we have that
\be \label{CVF}
\dim _t H^0(M_H, L^k) = \sum_{f\in F^{\rm reg}_{\rho,t} / W} \theta_t(f)^{1-g}.
\ee

\end{thm}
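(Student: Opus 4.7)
My plan is to compute $\dim_t H^0(M_H, L^k)$ as a $\bbC^*$-equivariant holomorphic Euler characteristic and then apply equivariant Riemann--Roch, generalizing the Teleman--Woodward approach to Theorem~\ref{thm:CVF}. Two preliminary reductions are needed. First, the Hitchin character is well defined: the Hitchin map $h : M_H \to B$ is proper and $\bbC^*$-equivariant with strictly positive weights on the affine base $B$, so $h_* L^k$ is a $\bbC^*$-equivariant coherent sheaf on $B$, and its space of global sections splits into finite-dimensional $\bbC^*$-weight spaces. Second, a vanishing argument (using the $\bbC^*$-retraction of $M_H$ onto the moduli of bundles $M$, where classical vanishing for $L^k$ holds, together with flatness of $h$) shows that $\dim_t H^0(M_H, L^k) = \chi_t(M_H, L^k)$.

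The heart of the argument is then an equivariant Grothendieck--Riemann--Roch computation on the moduli stack of semistable Higgs bundles. A Higgs bundle $(E,\phi)$ has $\phi \in H^0(\Sigma, \mathrm{ad}\,E \otimes K_\Sigma)$ on which $\bbC^*$ acts with weight one. Applied to the stack, the K-theoretic pushforward contributes an extra symmetric-algebra factor $\Sym^\bullet\!\bigl(H^0(\Sigma, \mathrm{ad}\,E \otimes K_\Sigma)^\vee \otimes t\bigr)$, producing at each maximal-torus weight $\alpha$ a factor of the form $(1 - t e^\alpha)^{-1}$. Combining this insertion with the GRR/Teleman computation on the moduli of bundles twists the integrand by exactly the factor needed to replace $\chi$ by $\chi_t$ and the classical phase $\tfrac{1}{2}(k+h)\langle\xi,\xi\rangle + \langle\rho,\xi\rangle$ by the deformed potential $D_t$ of \eqref{FunD}, since the identity $\partial_\xi \mathrm{Li}_2(t e^\xi) = -\log(1 - t e^\xi)$ assembles the weight-by-weight logarithmic derivatives into a single trace over $\frakg$.

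Stationary phase at the critical points of $D_t$ cuts out $f = e^\xi$ satisfying $\chi_t(f) = e^{2\pi i \rho}$, i.e.\ $f \in F_{\rho,t}$; the Weyl group accounts for orbits and the regularity condition excludes degenerate strata. The Hessian produces $\det H_t^\dagger$, and combining it with the Chern/Todd contributions and the Weyl denominator yields exactly the prefactor defining $\theta_t$. The genus dependence $\theta_t(f)^{1-g}$ is then the standard outcome of integrating against the Todd class of $\Sigma$, identical in structure to the classical case.

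The main obstacle is controlling the non-compactness of $M_H$ in the equivariant Riemann--Roch step. Since stationary phase and the symmetric-algebra traces are a priori formal, a natural remedy is to work in the regime $|t|<1$, where $\mathrm{Li}_2(t e^\xi)$ and all relevant character series converge absolutely, establish the identity there, and then match coefficients of $t^n$ on both sides using the finite-dimensionality of each $H^0_n(M_H,L^k)$ produced by the Hitchin fibration. A secondary technical point is the passage through the semistable locus: one must either show that strictly semistable strata contribute trivially, or execute the computation on the full stack in the spirit of \cite{TW}. These analytic and stack-theoretic issues form the core of the proof; the algebraic identification of $(\chi_t, D_t, \theta_t)$ with the critical-point residue sum is, by contrast, comparatively direct once the characters of the cotangent directions have been packaged into $\mathrm{Li}_2$.
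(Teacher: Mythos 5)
Your overall architecture matches the paper's: reduce to an index computation over the moduli stack of bundles twisted by the symmetric algebra of the conormal directions, prove that higher cohomology vanishes so the index computes $\dim_t H^0$, and pass between stack and moduli space by a codimension argument. The index computation itself is handled in the paper not by a from-scratch stationary-phase/GRR argument but by writing $N^{(R)*}_{\frak{M}}[-1]=R(\pi_{\frak{M}})_*(E(\frak{g})\otimes K_\Sigma^{1-R/2})$ in terms of the Atiyah--Bott generators $E_\Sigma\frak{g}$ and $E_x\frak{g}$ via the family index theorem, and then feeding $\lambda_{-t}$ of that class (expanded through Adams operations) into the already-established Teleman--Woodward index formula; your $\mathrm{Li}_2$/Hessian bookkeeping is the correct output of that substitution, so this part is essentially sound once the K-theory identity is written out.

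The genuine gap is the vanishing step. You dispose of $H^{>0}$ in one sentence via ``the $\bbC^*$-retraction of $M_H$ onto $M$ \dots\ together with flatness of $h$,'' but neither ingredient delivers the conclusion: the retraction does not relate $H^i(M_H,L^k)$ to $H^i(M,L^k)$ --- the higher cohomology lives along the noncompact directions collapsed by the retraction, which is precisely where the difficulty sits --- and positivity of $L^k$ along the Hitchin map only kills $R^{i>0}h_*L^k$ over the locus where $h$ is a smooth fibration. The paper explicitly records that extending this Hitchin-map argument to the singular and possibly non-reduced fibers is an open problem and abandons that route. The actual proof of Theorem \ref{Vanish} is the technical core of the paper: it presents $\frak{M}$ as $\frak{G}_0\backslash\bX$ with $\bX$ the thick flag variety, resolves $S^rT$ by the Koszul-type complex \eqref{ess}, and runs spectral sequences whose degeneration rests on Teleman's Borel--Weil--Bott theory on the moduli stack, the Fishel--Grojnowski--Teleman strong Macdonald results (Theorem E of \cite{FGT}), and a nodal-degeneration argument reducing to the once-punctured $\bbP^1$. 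None of this is supplied by your sketch; as written, your argument establishes at best the index identity of Theorem \ref{thm:Main5}, not that the index equals $\dim_t H^0(M_H,L^k)$.
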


We want to stress that our proof relies strongly on the index formula of Teleman and Woodward \cite{TW} and on a series of beautiful results by Teleman that provide deep understanding of natural sheaf cohomology groups over the moduli spaces and stacks of $G$-bundles on curves \cite{Te0, Te1, Te3,Te}, as well as further developments by him with Fishel and Grojnowski \cite{FGT} and with Frenkel \cite{FT}. 

When $g>1$ a codimension argument shows that
\be\label{SpaceStack}
H^0(M_H, L^k)  = H^0({\mathfrak M}_H, {\mathfrak L}^k),\ee
where ${\mathfrak M}_H$ is the stack of $G$-Higgs bundles and we denoted the determinant line bundle over the stack by ${\mathfrak L}$. In fact, it is for the stack that the Verlinde formula (\ref{CVF}) is deduced and then the above identification of vector spaces gives the theorem. We remark that the right hand side of \eqref{CVF} is, in fact,  for all genus $g$, equal to the index of ${\mathfrak L}^k$ over the stack ${\mathfrak M}_H$ (see Theorem \ref{thm:Main5} below), but for genus zero and one, $\frak{M}_H$ is a derived stack and \eqref{SpaceStack} is no longer valid. The proof of Theorem \ref{thm:Main1} is presented in Section \ref{Proof}, where we also establish the Verlinde formula for the $R$-twisted Higgs bundles. Please see Section \ref{Proof} for definitions and details.

Let us now consider the parabolic generalization. Let $x_1,\ldots, x_m$ be distinct points on $\Sigma$ and $D=\sum_i x_i$ be a divisor on $\Sigma$. 
Let $\lambda$ be a level-$k$ integrable dominant weight for $G$, {\it e.g.}
$$\langle \lambda , \vartheta \rangle \leq k,$$
where $\vartheta$ is the highest root of $G$. We denote by $\Lambda_k$ the set of these level-$k$ integrable dominant weights.

Let $P$ be the moduli space of parabolic $G$-bundles for a choice of parabolic structures, determined by a parabolic sub-group $P_i\subset G$ at $x_i$, together with a choice of a weights $\lambda_i\in \Lambda_k$, $i=1,\ldots,m$. We let $P_H$ be the corresponding moduli space of parabolic $G$-Higgs bundles  with respect to the same weight vectors.
Let $L_k$ be the level-$k$ determinant line bundle over $P$ and $P_H$ determined by the weights $\vec \lambda = (\lambda_1, \ldots, \lambda_n)$ assigned to each of the marked point $(x_1, \ldots, x_n)$ respectively. The compatibility condition between $\lambda_i$ and $P_i$ we must require, is that $P_i$ leaves $\lambda_i$ invariant, for the line bundle $L_k$ to exist.\footnote{Our convention will be the following. Choose a Borel subgroup $B$ of $G$, whose Lie algebra is $\frak{b}$. Then $\frak{g}/\frak{b}$ will be spanned by the \emph{positive} roots, and a weight $\lambda$ annihilated by $B$ under the right action will live in the dominant Weyl chamber. As $P_i\supset B$, its Lie algebra $\frak{p}_i=\mathrm{Lie}(P_i)$ will also contain all negative roots.} Again, the natural ${\mathbb C}^*$-action on $P_H$ then lifts to $L_{k}$ and we have that
$$ H^0(P_H, L_{k}) \cong \bigoplus_{n=0}^\infty  H_n^0(P_H, L_{k})$$
and we can again define the dimension, or the {\em parabolic Hitchin character}, to be the formal generating series, in a variable $t$ say, for the dimensions of these finite-dimensional ${\mathbb C}^*$-weight spaces as in the non-parabolic case. We obtain the following {\em Verlinde formula for parabolic Higgs bundles}.

\begin{thm}\label{thm:Main2} 
For $g>1$ we have that
\be\label{CVFP}
\dim_t H^0(P_H, L_{k})  =\sum_{f\in F^{\rm reg}_{\rho,t} / W} \theta_t(f)^{1-g} \prod_i\Theta_{\lambda_i,P_i,t}(f), 
\ee
where $\Theta_{\lambda,P',t}$ is defined for a pair $(\lambda,P')$ of a level-$k$ highest weight $\lambda$ of $G$ and a compatible parabolic subgroup $P'$ as follows
$$
\Theta_{\lambda, P',t} = \sum_{w\in W} \frac{(-1)^{l(w)} e^{w(\lambda+\rho)}}{\Delta \prod_{\alpha\in \frak{R}(\frak{g}/\frak{p}')}(1-te^{w(\alpha)})}
$$
and $\frak{R}(\frak{g}/\frak{p}')$ is the subset of the positive roots of $\frak{g}$ invariant under $P'$.
\end{thm}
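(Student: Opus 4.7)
The plan is to parallel the proof of Theorem \ref{thm:Main1}, extending each step to accommodate the parabolic data at the marked points. Crucially, the Teleman--Woodward index formalism used for the proof of Theorem \ref{thm:Main1} is compatible with parabolic modifications, because these are local changes at the punctures; the determinant line bundle $\mathfrak{L}_k$ on the parabolic Higgs stack $\mathfrak{P}_H$ differs from its non-parabolic analog by a twist by $\lambda_i$ at each $x_i$, and the $\mathbb{C}^*$-action scaling the Higgs field restricts to a weight-$t$ action on the residue directions at every marked point, which live in $\mathfrak{g}/\mathfrak{p}_i$.

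The first step is the stacky identification $H^0(P_H,L_k)\cong H^0(\mathfrak{P}_H,\mathfrak{L}_k)$ as graded $\mathbb{C}^*$-modules for $g>1$, by the same codimension argument used around \eqref{SpaceStack}. The second step is to extend the index formula of Teleman--Woodward to the stack $\mathfrak{P}_H$. Because the parabolic structure is encoded by a choice of point in the flag variety $G/P_i$ at each marked point, and $\mathfrak{L}_k$ restricts on each such $G/P_i$-fiber to the Borel--Weil line bundle $L_{\lambda_i}$, the index formula acquires a multiplicative contribution at each marked point coming from the equivariant Borel--Weil--Bott data of the pair $(\lambda_i, P_i)$.

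The third step is to perform the residue evaluation that in the non-parabolic case produces a sum over $F^{\rm reg}_{\rho,t}/W$ with bulk weight $\theta_t(f)^{1-g}$. The bulk of this evaluation is unchanged in the parabolic setting, since it depends only on global data on $\Sigma$; what changes is that at each marked point the residue picks up an additional local factor, which by Atiyah--Bott equivariant localization on the flag variety $G/P_i$ evaluates to the Weyl-antisymmetrized sum $\Theta_{\lambda_i,P_i,t}(f)$. The denominator $\prod_{\alpha\in\mathfrak{R}(\mathfrak{g}/\mathfrak{p}_i)}(1-te^{w(\alpha)})$ arises as the $\mathbb{C}^*$-equivariant Euler class of the cotangent directions to $G/P_i$ at the $T$-fixed point labeled by $w\in W$, with $\mathbb{C}^*$-weight $t$ reflecting the scaling of the Higgs-field residue at $x_i$. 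The numerator $(-1)^{l(w)} e^{w(\lambda_i+\rho)}/\Delta$ is the standard Weyl character formula input, using $P_i$-invariance of $\lambda_i$ to ensure that $L_{\lambda_i}$ descends to $G/P_i$ so that Borel--Weil--Bott applies. Assembling the contributions at each puncture with the bulk $\theta_t(f)^{1-g}$ yields \eqref{CVFP}.

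The main obstacle is establishing the parabolic analog of the Teleman--Woodward index formula and the associated vanishing of higher cohomology of $\mathfrak{L}_k$ on $\mathfrak{P}_H$, which requires extending the technology of \cite{Te0,Te1,Te3,Te,FGT,FT} to the parabolic moduli stacks; this should be feasible because the parabolic modifications are local and controlled by the dominant integral nature of the weights $\lambda_i$. Granted this machinery, the combinatorial identification of the puncture contribution with $\Theta_{\lambda_i,P_i,t}$ is a direct equivariant Borel--Weil--Bott computation on the finite-dimensional flag variety $G/P_i$.
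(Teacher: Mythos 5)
Your proposal follows essentially the same route as the paper: reduce to the stack, identify $H^0(\mathfrak{P}_H,\mathfrak{L}_k)$ with sections of $S^*(N^*_{\mathfrak{P}})\otimes\mathfrak{L}_k$ over $\mathfrak{P}$, exhibit $\mathfrak{P}$ as a $G/P_1\times\cdots\times G/P_n$-fibration over $\mathfrak{M}$, compute the puncture contributions by $T$-equivariant localization on the flag varieties (yielding exactly the deformed characters $\Theta_{\lambda_i,P_i,t}$), and invoke a Teleman-style vanishing theorem for the higher cohomology. The only organizational difference is that the paper does not literally extend the Teleman--Woodward formula to a parabolic stack; it pushes forward along $\pi:\mathfrak{P}\to\mathfrak{M}$ so that the flag-variety factors become evaluation classes $E_{x_i}\Theta_{\lambda_i,P_i}$ already covered by the original index theorem on $\mathfrak{M}$ --- which is the same computation you describe.
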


The same remarks concerning the proof and its $R$-twisted generalizations of Theorem \ref{thm:Main1} also applies to the proof of this Theorem and its $R$-twisted generalizations. Please see Section \ref{ParaProof}.

We further can show that these dimension formulae have the following $1+1$-dimensional TQFT properties (in fact, it is the indices over the \emph{stacks} that truly has the TQFT behaviour, reaffirming the advantage of favouring the stack over the moduli space). First we introduce the TQFT.

\begin{definition}
For any $g,n,k$ non-negative integers and any $n$-tuple of level-$k$ integrable dominant weights, we define $d_{n,g}(\vec \lambda)(t)$ for sufficient small $t$ by
$$ d_{g,n}(\vec \lambda)(t) = \sum_{f\in F^{\rm reg}_{\rho,t} / W} \theta_t(f)^{1-g} \prod_i\Theta_{\lambda_i,P_i,t}(f).$$
\end{definition}

Thus $d_{g,n}(\vec \lambda) $ is defined to be the right-hand side of (\ref{CVFP}), which is also equal to the index of the $k$-th power of the determinant bundle over the moduli stack of $G$-Higgs bundles as we will see in Section \ref{tqft}, where we will also argue that
$$
d_\lambda := d_{0,2}^{-1}(\lambda, \lambda^*),
$$
takes the simple form of $d_\lambda(t) = P_{t^{1/2}}(BH_\lambda)$ with $P_{\bullet}$ denoting the Poincare polynomial and $H_\lambda$ being the stabilizer of $e^{\lambda^\vee/k}$ in $K$. We further observe that for small enough $t$ (uniform in $g,n, \vec \lambda$) the series $d_{g,n}(\vec \lambda)(t)$ are actually a holomorphic functions of $t$.
\begin{thm}\label{thm:Main3}
Assume that all $P_i$'s used are the Borel subgroup $B$ of $G$. For any $g, g_1, g_2$ and $n,n_1,n_2$ non-negative integers and all vectors $\vec \lambda, \vec \lambda_1, \vec\lambda_2$ of level-$k$ integrable dominant weights, we have that 
$$
d_{g+1, n-2}(\vec \lambda) = \sum_{\lambda \in \Lambda_k} d_{g,n}(\vec \lambda, \lambda, \lambda^*) d_\lambda$$
and
$$d_{g_1+g_2,n_1+n_2}(\vec \lambda_1, \vec \lambda_2) = \sum_{\lambda \in \Lambda_k} d_{g_1,n_1+1}(\vec \lambda_1, \lambda) d_{g_2,n_2+1}(\vec \lambda_2, \lambda^*) d_\lambda.$$
\end{thm}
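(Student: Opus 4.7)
The plan is to recognize both identities in Theorem~\ref{thm:Main3} as the standard gluing axioms of a $1+1$-dimensional TQFT, and to deduce them from a single orthogonality/completeness relation between two natural bases of a deformed Verlinde-Frobenius algebra. Concretely, I would organize the numbers $d_{g,n}(\vec\lambda)(t)$ as the correlators of a closed TQFT with state space $V_t = \bigoplus_{\lambda \in \Lambda_k}\bbC[[t]]\cdot v_\lambda$, equipped with the pairing $\eta(v_\lambda, v_\mu) = d_{0,2}(\lambda,\mu)$ whose inverse is $\eta^{-1}(v_\lambda, v_\mu) = d_\lambda\,\delta_{\mu,\lambda^*}$. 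Because the two gluing moves (closing a handle; cutting along a separating curve) together generate all degenerations, both identities of the theorem would then follow from a single algebraic input.

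That input is the completeness identity
\be\label{eq:orth-plan}
\sum_{\lambda \in \Lambda_k} d_\lambda\,\Theta_{\lambda,B,t}(f)\,\Theta_{\lambda^*,B,t}(f') \;=\; \frac{\delta_{[f],[f']}}{\theta_t(f)},
\qquad f,f'\in F^{\rm reg}_{\rho,t}/W,
\ee
together with its dual $\sum_{f} \theta_t(f)\,\Theta_{\lambda,B,t}(f)\,\Theta_{\mu,B,t}(f) = \delta_{\mu,\lambda^*}/d_\lambda$. Granted \eqref{eq:orth-plan}, each axiom becomes a routine manipulation: in the right-hand side substitute the defining sum for $d_{g,n}$, interchange the $\lambda$-sum with the sums over $f_1,f_2\in F^{\rm reg}_{\rho,t}/W$, apply \eqref{eq:orth-plan} to collapse $f_1=f_2=:f$, and observe that the resulting factor $1/\theta_t(f)$ adjusts the exponent of $\theta_t(f)$ from $(1-g_1)+(1-g_2)$ down to $1-(g_1+g_2)$ in the separating-curve case, and from $1-g$ down to $-g = 1-(g+1)$ in the handle case. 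The remaining parabolic factors $\prod \Theta_{\lambda_i,B,t}(f)$ multiply out correctly because the TQFT gluing identifies the weights at the glued punctures. This reduction uses only that $P_i=B$, matching the hypothesis.

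The heart of the argument is therefore \eqref{eq:orth-plan}. At $t=0$ one has $\Theta_{\lambda,B,0}= \sum_w (-1)^{l(w)} e^{w(\lambda+\rho)}/\Delta$, the ordinary Weyl character, and $\theta_0=\Delta^2/|F|$, so \eqref{eq:orth-plan} specializes to the classical unitarity of the Kac-Peterson $S$-matrix for $\hat{\frak g}_k$, which underlies Theorem~\ref{thm:CVF}. To extend this to small $t$ I would use the critical-point interpretation of $F^{\rm reg}_{\rho,t}$ as $\{\mathrm{d}D_t=0\}$ implicit in \eqref{FunD}: Teleman-Woodward's index formula \cite{TW} realizes the RHS of \eqref{CVFP} as a Jeffrey-Kirwan-type residue sum in which $\det H_t^\dagger$, hence $\theta_t$, arises as exactly the measure dual to the Weyl-antisymmetric numerator of $\Theta_{\lambda,B,t}$. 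Both sides of \eqref{eq:orth-plan} are holomorphic in $t$ on a common neighborhood of $0$, the cardinalities $|\Lambda_k|$ and $|F^{\rm reg}_{\rho,t}/W|$ coincide (the latter is stable under small generic deformation), and the classical equality at $t=0$ propagates by analyticity to a neighborhood; since the $d_{g,n}$ in the theorem are likewise defined for small $t$, the identities follow on the common domain and then globally by the rationality in $t$ of an equivariant index.

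The main obstacle I expect is the verification that the polylogarithmic deformation from $(\chi,\theta)$ to $(\chi_t,\theta_t)$ preserves the duality between the bases indexed by $\Lambda_k$ and by $F^{\rm reg}_{\rho,t}/W$: both the characters and the measure move in a nontrivial, entangled way, and a naive term-by-term comparison fails. I would attack this either conceptually, by identifying both sides of \eqref{eq:orth-plan} as Duistermaat-Heckman outputs of one and the same ${\mathbb C}^*$-equivariant localization on $\mathfrak{M}_H$, or combinatorially, by expanding in $t$ and checking that the Weyl antisymmetry of $\Theta_{\lambda,B,t}$ together with the constraint $\chi_t(f)=e^{2\pi i\rho}$ forces the $t=0$ cancellations to persist order by order. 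Either route also yields the expression $d_\lambda(t) = P_{t^{1/2}}(BH_\lambda)$ claimed in the paragraph preceding the theorem, by specializing \eqref{eq:orth-plan} to $f=f'$ and identifying the resulting local factor with the $t$-Poincar\'e polynomial of the stabilizer.
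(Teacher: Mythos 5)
Your reduction of both gluing rules to a completeness relation for the deformed characters is exactly the route the paper takes: Theorem~\ref{thm:Main3} is deduced in Section~\ref{tqft} from the second orthogonality relation of Theorem~\ref{DC}, and the formal manipulation you describe (substitute the defining sums, interchange the sum over $\Lambda_k$ with the sums over $F^{\rm reg}_{\rho,t}/W$, collapse the two fixed-point sums to one, and absorb the resulting factor $1/\theta_t(f)$ into the exponent of $\theta_t$) is the same. Your passage from the ``first'' orthogonality $\sum_{f}\theta_t(f)\Theta_{\lambda,B,t}(f)\Theta_{\mu,B,t}(f)=\delta_{\mu,\lambda^*}d_\lambda^{-1}$ to the completeness relation via the fact that the $\Theta_{\lambda,B,t}$ remain a basis of functions on $F^{\rm reg}_{\rho,t}/W$ for small $t$ also mirrors the paper.

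The genuine gap is that you never actually prove the orthogonality for $t\neq 0$, and the one concrete mechanism you offer --- that ``the classical equality at $t=0$ propagates by analyticity to a neighborhood'' --- is not a valid argument: two holomorphic families that agree at a single value of $t$ need not agree nearby, and here the characters $\Theta_{\lambda,B,t}$, the evaluation set $F^{\rm reg}_{\rho,t}$, and the measure $\theta_t$ all deform nontrivially and in an entangled way. You acknowledge this obstacle yourself and sketch two possible attacks (equivariant localization on $\frak{M}_H$, or an order-by-order expansion in $t$), but neither is carried out, and this is precisely where the substance of the proof lies. The paper closes the gap by computing the cylinder index $d_{0,2}(\lambda_1,\lambda_2)$ a second, independent way: it evaluates the index over the Shatz--Atiyah--Bott stratification of $\frak{M}(\mathbb{P}^1)$ with two parabolic points. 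The semistable stratum $BG$ contributes $\delta_{\lambda_1,\lambda_2^*}(d^{\rm ss}_{\lambda_1})^{-1}$ via the orthogonality of Hall--Littlewood polynomials (Proposition~\ref{tqftpss}); a vanishing theorem (Theorem~\ref{HigherVanish}) shows that the unstable strata contribute only when $\lambda_1=\lambda_2^*$ lies on the affine wall, in which case the stratum labelled by $\xi_\vartheta$ contributes exactly the correction that upgrades the stabilizer sum from $W_\lambda$ to $W_\lambda^{\rm aff}$, yielding $d_\lambda^{-1}$ (Theorem~\ref{tqftp}). Comparing with the general parabolic index formula of Theorem~\ref{thm:ParaVerlinde} then gives the first orthogonality, and completeness gives the second. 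Note in particular that the affine-wall correction is invisible at $t=0$, so no argument that only uses the classical ($t=0$) unitarity of the $S$-matrix as input can see it; without a substitute for this computation your proof is incomplete.
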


In Section \ref{tqft} we provide the proof of this Theorem. These TQFT rules of course allows us to define the {\em complex Verlinde algebra} associated to the theory as follows.

\begin{definition}\label{VA}
Let ${\mathcal V}_{G}^{(k)}$ be the free ${\mathbb C}$-vector space on the finite set $\Lambda_k$ with the following multiplication
$$ \lambda \star_t \mu = \sum_{\nu\in \Lambda_k} d_{0,3}(\lambda,\mu,\nu^*)(t)d_\nu(t) \nu,$$
and symmetric bilinear pairing
$$ \langle \lambda,\mu\rangle_t = \delta_{\lambda,\mu^*} d_{\lambda}(t)^{-1}$$
for small enough $t$.
\end{definition}

\begin{thm}\label{thm:Main4}
The triple $({\mathcal V}_{G}^{(k)}, \star_t, \langle \cdot,\cdot\rangle_t)$ is a Frobenius algebra for small enough $t$.
\end{thm}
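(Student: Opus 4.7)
The plan is to verify the Frobenius algebra axioms (existence of a unit, associativity and commutativity of $\star_t$, the Frobenius identity $\langle \lambda \star_t \mu, \nu\rangle_t = \langle \lambda, \mu \star_t \nu\rangle_t$, and non-degeneracy of $\langle \cdot, \cdot\rangle_t$) by translating each into a gluing identity supplied by Theorem \ref{thm:Main3}. The central observation is that $\langle \lambda, \mu\rangle_t$ coincides with $d_{0,2}(\lambda, \mu)(t)$, and that every function $d_{g,n}(\vec\lambda)(t)$ is manifestly symmetric in its weight arguments (since the definition involves only the symmetric product $\prod_i \Theta_{\lambda_i, B, t}(f)$); in particular $d_\lambda(t) = d_{\lambda^*}(t)$, and commutativity of $\star_t$ is then immediate from the symmetry of $d_{0,3}$. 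For $|t|$ sufficiently small, $d_\lambda(t)$ is close to the positive-integer classical Verlinde dimension $d_\lambda(0)$, hence non-zero, so the pairing matrix $\eta_{\lambda\mu} = \delta_{\lambda, \mu^*}\,d_\mu(t)^{-1}$ is a permutation matrix with non-zero scalar entries, and hence invertible.

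For associativity, I would apply the separating gluing rule of Theorem \ref{thm:Main3} to the decomposition $\Sigma_{0,4} = \Sigma_{0,3} \cup_{S^1} \Sigma_{0,3}$, with $g_1 = g_2 = 0$ and $n_1 = n_2 = 2$, obtaining
\begin{equation*}
d_{0,4}(\lambda_1, \lambda_2, \lambda_3, \lambda_4) = \sum_{\mu \in \Lambda_k} d_{0,3}(\lambda_1,\lambda_2,\mu)\, d_{0,3}(\lambda_3, \lambda_4, \mu^*)\, d_\mu(t).
\end{equation*}
A direct expansion (using $d_\mu = d_{\mu^*}$) then shows that both $(\lambda_1 \star_t \lambda_2) \star_t \lambda_3$ and $\lambda_1 \star_t (\lambda_2 \star_t \lambda_3)$ equal $\sum_\nu d_{0,4}(\lambda_1, \lambda_2, \lambda_3, \nu^*)\,d_\nu(t)\,\nu$, the two being reconciled by the symmetry of $d_{0,4}$ in its weight arguments. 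The Frobenius identity follows from a parallel direct computation, yielding
\begin{equation*}
\langle \lambda_1 \star_t \lambda_2,\,\lambda_3 \rangle_t = d_{0,3}(\lambda_1, \lambda_2, \lambda_3)(t) = \langle \lambda_1,\,\lambda_2 \star_t \lambda_3 \rangle_t,
\end{equation*}
where the second equality uses the symmetry of $d_{0,3}$.

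For the unit I would take the explicit element
\begin{equation*}
e_t := \sum_{\mu \in \Lambda_k} d_{0,1}(\mu)(t)\, d_\mu(t)\, \mu^* \;\in\; \mathcal{V}_G^{(k)}
\end{equation*}
and verify $e_t \star_t \nu = \nu$ for every $\nu \in \Lambda_k$. Unpacking the definitions and extracting the coefficient of $\tau$, this reduces to the identity
\begin{equation*}
\sum_{\mu \in \Lambda_k} d_{0,1}(\mu)\, d_\mu\, d_{0,3}(\mu^*, \nu, \tau^*) = \delta_{\nu,\tau}\, d_\tau^{-1} = d_{0,2}(\nu, \tau^*),
\end{equation*}
which is precisely the separating gluing rule of Theorem \ref{thm:Main3} applied to $\Sigma_{0,2} = \Sigma_{0,1} \cup_{S^1} \Sigma_{0,3}$ with $g_1 = g_2 = 0$, $n_1 = 0$, $n_2 = 2$ (combined with the symmetry of $d_{0,3}$). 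The main delicate point throughout is the ``for small enough $t$'' hypothesis: the identities above should be interpreted as identities of holomorphic functions of $t$ on the common disk of convergence noted just before Theorem \ref{thm:Main3}, and one must stay inside this disk to guarantee both the convergence of the gluing sums and the invertibility of $d_\lambda(t)$. Beyond this caveat no serious obstacle is anticipated; the result is essentially a formal consequence of the TQFT gluing axioms already established.
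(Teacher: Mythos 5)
Your proof is correct and follows essentially the same route as the paper: associativity and the Frobenius compatibility are both derived from the two factorizations of the four-holed sphere via the gluing rules of Theorem \ref{thm:Main3}, exactly as in the paper's own proof. You additionally spell out commutativity, non-degeneracy, and the unit (the cap state $e_t=\sum_{\mu} d_{0,1}(\mu)\,d_\mu\,\mu^*$, verified by gluing a disc onto a pair of pants), which the paper subsumes under ``the usual 2D TQFT classification construction''; these extra verifications are also correct.
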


The associativity of $\star_t$ and compatibility between $\star_t$ and $ \langle \cdot,\cdot\rangle_t$ follows immediately from Theorem \ref{thm:Main3} by the usual 2D TQFT classification construction, which we briefly recall in Section \ref{tqft}. We remark  that the complex Verlinde algebra $({\mathcal V}_{G}^{(k)}, \star_t, \langle \cdot,\cdot\rangle_t)$, e.g. this one parameter family of Frobenius algebras completely determines the one parameter family of TQFT $d_{g,n}(\vec \lambda)(t)$, for $t$ small.

Theorems \ref{thm:Main3} and \ref{thm:Main4} strongly suggest that there should be a kind of deformed Cohomological Field Theory associated to the corresponding bundles, by the pushforward from the universal $G$-Higgs bundle stack, which fibers over the moduli space  of curves. In fact, one can expect that they will satisfy nice factorization properties (indicated by the above recursion relations for the $d$'s) over the Deligne-Mumford compactifications of the moduli space of curves as was establish by Tsuchiya, Ueno and Yamada in \cite{TUY}, when one considers just moduli space of $G$-bundles. If this would be possible, most likely one can deform the constructions of \cite{AGO} and provides a kind of deformed CohFT description of the Chern classes of the resulting bundles and compute them by Topological Recursion.

From a physical point of view one should in fact expect that one can push these constructions all the way to the construction of a kind of deformed modular functor, as was done by Andersen and Ueno in \cite{AU1,AU2} in the case of $G$-bundles. In particular, this would require the construction of a projectively flat Hitchin or TUY connection in this Higgs bundle case. Pushing this further, one should be able to use this to provide a full geometric construction of Complex Quantum Chern-Simons theory for the group $G$ and possibly connect it with the theory construction by Andersen and Kashaev in \cite{AK1,AK2,AK3}. This would then require the development of an analog of the isomorphisms provided by Andersen and Ueno for $K=SU(n)$ in \cite{AU3,AU4}.

This paper is organized as follows.  We recall the Teleman-Woodward index formula in section \ref{TWIndex}. In Section \ref{Proof} we provide the proof of Theorem \ref{thm:Main1} concerning moduli spaces of Higgs bundles and we discuss the generalization to the $R$-twisted case. We give the proof of Theorem \ref{thm:Main2} regarding the parabolic case in Section \ref{ParaProof}, where we also discuss $R$-twisting. In particular, we also propose the relevant deformation of the usual character formulae relevant in this Higgs bundles case.
In Section \ref{Exceptional}, we discuss the special cases of $g$ being zero and one, together with some discussion of the rank one case in genus $2$.
In Section \ref{tqft}, we provide a 2D TQFT viewpoint on these formulae and prove Theorem \ref{thm:Main3}.

We would like to thank Indranil Biswas, Steve Bradlow, Peter Gothen, Tam\'as Hausel, Oscar Gargia-Prada, Nigel Hitchin, Tony Pantev, Brent Pym, Constantin Teleman, Richard Wentworth and Ke Ye for helpful discussions. Various stages of this work were performed while the authors were at the department of Mathematics and Physics at Caltech, at Centre for Quantum Geometry of Moduli Spaces, visiting the Chennai Mathematical Institute, and attending the Simons Summer Workshop 2016. We thank the hosts and organizers for their hospitality.

While the paper was being completed, we learned about the independent work from Daniel Halpern-Leistner on the same subject. After communicating with him, we decided to coordinate the arXiv submission. We would like to thank him for his kindness and understanding in this coordination process. Our work was motivated by the TQFT point of view, which drove us to considering parabolic Higgs bundles and to establish a gluing formula for these dimensions. On the other hand, Halpern-Leistner's paper \cite{HL} considered Higgs fields that are twisted by arbitrary line bundles for arbitrary reductive $G$ (with trivial fundamental group).

After the first version of the paper appeared on the arXiv, we learn from Tam\'as Hausel's talk at the conference ``Hitchin70: Differential Geometry and Quantization,'' held at QGM, Aarhus about his unpublished work with Andr\'as Szenes, in which the Verlinde formula for $SL(2,\C)$-Higgs bundles was obtained as a residue formula via direct computation.\footnote{This work was announced on September 18, 2003 at the geometry seminar at UT Austin.}

\section{The Teleman-Woodward Index formula}\label{TWIndex}

We now briefly review the results of \cite{TW}, setting up the stage for the proofs of the main theorems in later sections. An important lesson from \cite{TW} is that one should not work with the moduli space, but rather with the \emph{stack}, as the index theory on the latter behaves much more nicely. Therefore, let ${\mathfrak M}$ be the moduli stack of $G$-bundles over $\Sigma$. By abuse of notation, we also denote the determinant line bundle over ${\mathfrak M}$ by $L$.

The K$^0$-group of $\frak{M}$ is generated by the even Atiyah-Bott generators, which we now recall following \cite{AB} and \cite{TW}. Recall that the there is a universal principle $G$-bundle $E$ over $\frak{M}\times\Sigma$, and for a $G$ representation $V$, we denote the associated bundle $E(V)$. For a point $x\in\Sigma$, we denote by $E_x V$ the restriction of $E(V)$ to $\{x\}\times \frak{M}$, and $E_{\Sigma}V$ the total direct image of $E(V)\otimes K^{1/2}_\Sigma$ along the projection $\pi_{\frak{M}} : \frak{M}\times\Sigma \ra \frak{M}$. 

We have the following index theorem from \cite{TW}, which we only state for the even classes needed in this paper.

\begin{thm}[Teleman and Woodward]\label{TWInd}
The index formula for even classes is given by
$$\mathrm{Ind}\left(\frak{M};L^k\otimes \exp\left[s_1E_{\Sigma}V_1+\ldots+s_nE_{\Sigma}V_n\right]\otimes E_x V \right)=\sum_{f\in F^{\rm reg}_\rho /W}\theta_{\vec s}(f_{\vec s})^{1-g}\cdot \Tr_V\left(f_{\vec s}\right).$$
\end{thm}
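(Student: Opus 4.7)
The plan is to follow Teleman and Woodward's strategy of treating $\mathrm{Ind}(\mathfrak{M}; \cdot)$ as a regularized equivariant trace and reducing it via a fixed-point formula to a finite sum over $F_\rho^{\mathrm{reg}}/W$. The key input is the uniformization of $\mathfrak{M}$ as a double quotient of an affine loop group, which transports the index problem to a setting where Kac-Moody representation theory and Teleman's vanishing theorems for the cohomology of $L^k$ on $\mathfrak{M}$ are available. In this framework the left-hand side is an Euler characteristic that, because of Teleman's vanishing, is concentrated in degree zero and can be identified with a character.

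First I would interpret the twisting bundle $L^k \otimes \exp[s_1 E_\Sigma V_1 + \cdots + s_n E_\Sigma V_n] \otimes E_x V$ as shifted representation-theoretic data for an integrable level-$k$ Kac-Moody representation: the factor $L^k$ sets the level, the exponentials of the $E_\Sigma V_i$ classes (defined via direct image along $\pi_{\frak{M}}$ with a $K^{1/2}_\Sigma$-twist) enter as additive deformations of the Chern character, and the insertion $E_x V$ at a point restricts to the fiber $V$ of the associated bundle. Next I would apply the equivariant Lefschetz-Riemann-Roch formula for the natural action whose fixed locus on $\mathfrak{M}$ parametrizes flat unitary reductions to the maximal torus. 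The fixed points correspond, after quotienting by the Weyl group and removing stabilizers, to the regular locus $F_\rho^{\mathrm{reg}}/W$; the classical factor $\theta(f)^{1-g} = (\Delta(f)^2/|F|)^{1-g}$ arises as the $(1-g)$-th power of the equivariant Euler class of the deformation complex at $f$, one power of $\Delta^2/|F|$ per handle of $\Sigma$.

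The deformation parameters $s_i$ are then absorbed by observing that, at a fixed point $f$, the class $E_\Sigma V_i$ restricts to an explicit function of the weights of $V_i$ evaluated on $f$, with the $K^{1/2}_\Sigma$-twist producing the correct Riemann-Roch shift. Exponentiating and collecting these contributions together with the original Euler class simultaneously replaces the critical point $f$ by the shifted solution $f_{\vec s}$ of the resulting equivariant equation and modifies the Euler class to $\theta_{\vec s}(f_{\vec s})^{1-g}$. The pointwise class $E_x V$ does not carry a $\Sigma$-direction and contributes only the trace $\Tr_V(f_{\vec s})$ of the associated holonomy on the fiber, yielding the stated formula.

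The principal obstacle is justifying this localization in an infinite-dimensional setting: the tangent complex to $\mathfrak{M}$ at a fixed point has contributions from infinitely many Fourier modes, and the equivariant Euler class must be regularized coherently with the presence of $\exp[\sum s_i E_\Sigma V_i]$. This is precisely where Teleman's results (from \cite{Te0,Te1,Te3,Te} and the joint work \cite{FGT}) enter: the cohomology vanishing on $\mathfrak{M}$ and the explicit description of the relevant sheaves in terms of Kac-Moody modules make the regularization canonical, while the analysis of the shifted critical equation for $D_t$-type functions (specialized to the polynomial case where $\mathrm{Li}_2$ is replaced by the linear combinations from the $V_i$) identifies $f_{\vec s}$ unambiguously and controls the dependence of $\theta_{\vec s}$ on $\vec s$.
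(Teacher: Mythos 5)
The first thing to note is that the paper does not prove this statement at all: Theorem \ref{TWInd} is quoted from Teleman--Woodward \cite{TW} and used as a black box (it is the external input driving Theorems \ref{thm:Main5} and \ref{thm:MainP}). So there is no internal proof to compare against, and your sketch has to be measured against the actual argument of \cite{TW}.

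Measured that way, the central step of your sketch does not work. You propose to produce the finite sum over $F^{\rm reg}_\rho/W$ as the fixed-point contribution of an equivariant Lefschetz--Riemann--Roch formula for an action on $\frak{M}$ whose fixed locus consists of reductions to the maximal torus. No such localization is available: $F^{\rm reg}_\rho/W$ is not a locus in $\frak{M}$ at all --- it is a finite subset of $T/W$, namely the spectrum of the level-$k$ fusion (Verlinde) algebra. In \cite{TW} the sum arises algebraically, not geometrically: the index is organized via the Shatz stratification and a gluing/factorization argument into a $1+1$-dimensional TQFT (exactly the structure this paper re-uses in the proof of Theorem \ref{HigherVanish}); the associated Frobenius algebra is identified with functions on $F^{\rm reg}_\rho/W$; the class $E_xV$ acts there by multiplication by $\Tr_V$; and the insertion of $\exp[\sum_i s_iE_\Sigma V_i]$ is shown, by an explicit change-of-variables computation on this finite set, to move the evaluation points to the solutions $f_{\vec s}$ of $m_{\vec s}(f_{\vec s})=f$ while producing the Jacobian factor $\det H_{\vec s}^\dagger$ in $\theta_{\vec s}$. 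Your sketch never accounts for that Hessian determinant, which is precisely the new ingredient relative to the classical Verlinde formula, and it is the step where all the actual work lies. Finally, invoking Teleman's vanishing theorems as the ``regularization'' of an infinite-dimensional Euler class misattributes their role: they give concentration of cohomology in degree zero (so the index computes a dimension), but they do not define or justify localization contributions on the stack. As written, the proposal would not close; a workable proof either follows the TQFT/abelianization route of \cite{TW} or, for the undeformed $\vec s=0$ case only, the residue-theoretic localization proofs on the moduli space (Szenes, Jeffrey--Kirwan), which are of a different nature from what you describe and do not directly give the $\vec s$-deformed statement on the stack.
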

Here ${\vec s}=(s_1,\ldots,s_n)$, while $\theta_{\vec{s}}$ and $f_{\vec{s}}$ is defined as follows. Consider the multi-parameter transformations on $G$ given by
$$g\mapsto m_{\vec s}(g)=g\cdot \exp\left[\sum_i s_i\nabla \Tr_{V_i}(g)\right], $$
with the gradient in the bilinear form given by $k+h$ times our normalized Killing form, which is used through the rest of this section as the metric on $T$. For small $\vec s$, there are unique solutions $f_{\vec s}\in T$, continuous in $\vec s$, to the equations 
\be\label{beq}
m_{\vec s}(f_{\vec s})=f
\ee
for $f\in F^{\rm reg}_\rho$. Let $H_{\vec s}$ be the Hessian of the function
$$\xi\mapsto \frac{k+h}{2}\langle\xi,\xi\rangle +  \sum_i s_i \Tr_{V_i}(e^\xi) + \langle\rho,\xi\rangle, $$
which can be converted into an endomorphism $H^\dagger_{\vec s}$ of $\frak{t}$. Now define  the function $\theta_{\vec s}$ on $T$ by
\be\label{thetas}
\theta_{\vec s}(f)=\det{}^{-1}\left[H_{\vec s}(f)^\dagger\right]\cdot \frac{\Delta(f)^2}{|F|}. 
\ee
Of particular interest to us is the series $\sum_p s_pE^*_{\Sigma}V_p$ given by 
$$ \lambda_{-t}(E^*_\Sigma \frak{g})=\exp\left[-\sum_{p>0}\frac{t^pE^*_{\Sigma}\psi^p(\frak{g})}{p}\right],  $$ 
where $\psi^p$ is the $p$'th Adams operation.

\section{Proof of the Verlinde formula for Higgs bundles}\label{Proof}

We recall that a principal $G$-Higgs bundle over $\Sigma$ is a pair $(E,\Phi)$, where $E$ is a holomorphic principal $G$-bundle and $\Phi\in H^0(\Sigma,\mathrm{ad}(E)\otimes K_\Sigma)$. 
For moduli space of such we have used the notation $M_H$, and for the stack, ${\mathfrak M}_H$.
   
We will also consider the moduli space (stack) of $R$-twisted Higgs bundles $M_H^{(R)}$ ($\frak{M}_H^{(R)}$), {\it e.g.}~pairs of a $G$-bundle $E$ and an $R$-twisted Higgs field in 
$$\Phi\in H^0(\Sigma,\mathrm{ad}(E)\otimes K^{R/2}_\Sigma),$$
 where $R$ is an integer. One gets the usual Higgs bundle for $R=2$ and the ``co-Higgs bundle'' for $R=-2$. See {\it e.g.}~\cite{SR} and references therein for more discussion about the latter.

Let $N^{(R)}_{\frak{M}}$ be the normal bundle of $\frak{M} \subset {\frak{M}}_H^{(R)}$. 
Then the stack $\frak{M}_H^{(R)}$ can be identified as
\be \label{SS}
\frak{M}_H^{(R)}=\mathrm{Spec}\,\mathrm{Sym}(N^{(R)*}_{\frak{M}}). 
\ee

By abuse of notation we also denote the determinant line bundle over both ${\mathfrak M}$ and ${\mathfrak M}^{(R)}_H$ simply by ${\mathfrak L}$. When we consider these bundles and their cohomology groups, it should be clear, from the notation usage and the context, which bundle is being referred to.

\begin{prop}
We have the following identification of cohomology groups
$$ H^i\left({\mathfrak M}^{(R)}_H, \frak{L}^k\right) = H^i\left({\mathfrak M}, S^*(N_{\mathfrak M}^{(R)*})\otimes \frak{L}^k\right)$$
for all $i$.
\end{prop}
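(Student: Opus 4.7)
The plan is to realize the projection $\pi: \mathfrak{M}_H^{(R)} \to \mathfrak{M}$ as an affine morphism of stacks and then apply the projection formula together with the Leray spectral sequence. Since $\pi$ is affine (being the relative spectrum of a quasi-coherent sheaf of algebras by \eqref{SS}), the higher direct images $R^j\pi_* \mathcal{F}$ vanish for $j>0$ on any quasi-coherent sheaf $\mathcal{F}$, so the Leray spectral sequence collapses to the identification $H^i(\mathfrak{M}_H^{(R)}, \mathcal{F}) = H^i(\mathfrak{M}, \pi_*\mathcal{F})$.

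First I would observe that, by the very definition \eqref{SS}, one has
$$\pi_* \mathcal{O}_{\mathfrak{M}_H^{(R)}} = \mathrm{Sym}(N_{\mathfrak{M}}^{(R)*}) = S^*(N_{\mathfrak{M}}^{(R)*}),$$
as a quasi-coherent sheaf of $\mathcal{O}_{\mathfrak{M}}$-algebras. Next I would verify that the determinant line bundle $\mathfrak{L}$ on $\mathfrak{M}_H^{(R)}$ is canonically the pullback $\pi^*\mathfrak{L}$ of the determinant line bundle on $\mathfrak{M}$; this is standard since the underlying $G$-bundle of a Higgs pair $(E,\Phi)$ is what determines $\mathfrak{L}$, and $\pi$ is precisely the map forgetting $\Phi$. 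Consequently, by the projection formula applied to the affine morphism $\pi$, one has
$$\pi_*(\mathfrak{L}^k) = \pi_*(\pi^*\mathfrak{L}^k) = \pi_* \mathcal{O}_{\mathfrak{M}_H^{(R)}} \otimes \mathfrak{L}^k = S^*(N_{\mathfrak{M}}^{(R)*}) \otimes \mathfrak{L}^k.$$

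Finally, combining the vanishing of $R^j\pi_*\mathfrak{L}^k$ for $j>0$ with the degeneration of the Leray spectral sequence yields
$$H^i(\mathfrak{M}_H^{(R)}, \mathfrak{L}^k) = H^i(\mathfrak{M}, \pi_*\mathfrak{L}^k) = H^i\left(\mathfrak{M}, S^*(N_{\mathfrak{M}}^{(R)*}) \otimes \mathfrak{L}^k\right),$$
as desired.

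The main conceptual point — and the only mild obstacle — is to justify the affineness of $\pi$ and the vanishing of higher direct images in the stacky setting, where $\mathfrak{M}$ fails to be a scheme and $N^{(R)}_{\mathfrak{M}}$ is a perfect complex rather than a genuine vector bundle in general. However, since $\pi$ is by construction the relative $\mathrm{Spec}$ of a sheaf of quasi-coherent $\mathcal{O}_{\mathfrak{M}}$-algebras, affineness is tautological, and the vanishing of higher direct images for affine morphisms extends to Artin stacks without change (one checks it smooth-locally on $\mathfrak{M}$, where it reduces to the familiar scheme-theoretic statement). With this in hand, the argument above goes through verbatim.
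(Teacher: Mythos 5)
Your argument is correct and is essentially the paper's own proof: the paper likewise considers the (Leray) spectral sequence for the projection ${\mathfrak M}^{(R)}_H \ra {\mathfrak M}$, uses the relative-$\mathrm{Spec}$ description \eqref{SS} to see that the fiberwise cohomology is concentrated in degree zero, and concludes that the spectral sequence degenerates. Your version merely makes explicit the projection formula $\pi_*(\pi^*\frak{L}^k) = S^*(N_{\mathfrak M}^{(R)*})\otimes \frak{L}^k$, which the paper leaves implicit.
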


\begin{proof}
We consider the spectral sequence for the fibration ${\mathfrak M}^{(R)}_H \ra {\mathfrak M}$ and use \eqref{SS} to provide a description of the fibers. From this, it is clear that the spectral sequence collapses at the $E_2$-page, and since the cohomology groups along the fibers concentrate in degree zero, we get the claimed result.
\end{proof}

The right-hand side of formula \eqref{CVF} is very reminiscent of the index formula of Teleman and Woodward. In particular, if one sets $R=0$, as one can explicitly check, the formula coincides with the index of $\lambda_{-t}(\Omega)\otimes L^k$, where 
\be
\Omega=R(\pi_{\frak{M}})_*(E(\frak{g})\otimes K_\Sigma)
\ee
is the cotangent complex of $\frak{M}$. 

In fact the following theorem for general $R$ can be proved simply by a straight forward computation involving Theorem \ref{TWInd}.

\begin{thm}\label{thm:Main5} The index of the K-theory class of $S_{t}(N^{(R)*}_{\frak{M}})\otimes \frak{L}^k$ on the stack $\frak{M}$ is given by
\be
\ind\left(\frak{M},S_{t}( N_{\mathfrak M}^{(R)*})\otimes \frak{L}^k\right) = \sum_{f\in F^{\rm reg}_{\rho,t} / W} \theta_{t, R}(f)^{1-g},
\ee 
with $\theta_{t, R}$ now given by
$$\theta_{t,R} = (1-t)^{(R-1)\mathrm{rk} G}\frac{1}{|F|} \frac{ \prod_{\alpha}(1-e^\alpha)(1-te^\alpha)^{R-1}}{\det H_t^\dagger}.
$$
\end{thm}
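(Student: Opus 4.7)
The plan is to apply the Teleman--Woodward index formula (Theorem \ref{TWInd}) to the K-theory class $S_t(N^{(R)*}_{\frak{M}})\otimes \frak{L}^k$. The first step is to identify the normal bundle using the description (\ref{SS}): the Killing form gives $\frak{g}^*\cong \frak{g}$, and $N^{(R)}_{\frak{M}}\simeq R(\pi_{\frak{M}})_*(E(\frak{g})\otimes K^{R/2}_\Sigma)$, which for $R=2$ recovers the cotangent complex $\Omega$ featured in the discussion preceding the theorem. In K-theory we then use the standard identity $S_t(V)=\exp\!\big[\sum_{p\ge 1}t^p\psi^p(V)/p\big]$ to write our class as an exponential of Adams operations applied to this Atiyah--Bott-type generator, which is precisely the shape to which Theorem \ref{TWInd} applies.

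The second step reduces the $R$-twisted Atiyah--Bott generator to the one appearing in Theorem \ref{TWInd}. On $\Sigma\times\frak{M}$ we have $E(\frak{g})\otimes K^{R/2}_\Sigma = (E(\frak{g})\otimes K^{1/2}_\Sigma)\otimes K^{(R-1)/2}_\Sigma$, and since $K^{(R-1)/2}_\Sigma$ is pulled back from $\Sigma$, Grothendieck--Riemann--Roch along $\pi_{\frak{M}}$ converts the $R$-twisted generator to the standard one at the cost of an explicit multiplicative correction. The correction localizes weight-by-weight: each root $\alpha$ of $\frak{g}$ picks up an extra factor $(1-te^{\alpha})^{R-1}$, and the Cartan (trivial) part of $\frak{g}$ contributes $(1-t)^{(R-1)\rk G}$, matching the two prefactors appearing in $\theta_{t,R}$.

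The third step is to perform the explicit computation prescribed by Theorem \ref{TWInd} with $s_p=t^p/p$ and $V_p=\psi^p(\frak{g})$. Summing the resulting series via $\sum_{p\ge 1}(t^p/p)\Tr_{\psi^p(\frak{g})}(e^{\xi})=-\Tr_{\frak{g}}\log(1-te^{\xi})$, the gradient appearing in $m_{\vec s}(g)$ becomes $\sum_{\alpha}\alpha\cdot te^{\alpha}/(1-te^{\alpha})$, so the fixed-point equation (\ref{beq}) becomes exactly the equation defining $F_{\rho,t}=\chi_t^{-1}(e^{2\pi i\rho})$, identifying $f_{\vec s}$ with elements of $F_{\rho,t}^{\rm reg}/W$. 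Integrating once more (turning $-\log(1-te^{\xi})$ into ${\rm Li}_2(te^{\xi})$) shows that the Hessian $H_{\vec s}$ is precisely the Hessian $H_t$ of the function $D_t$ in (\ref{FunD}), so the determinant factor in (\ref{thetas}) gives the denominator $\det H_t^{\dagger}$ of $\theta_{t,R}$; the $\Delta(f)^2/|F|$ factor produces the remaining numerator $\prod_\alpha(1-e^\alpha)/|F|$ (using $\Delta(f)^2=\prod_\alpha(1-e^\alpha)$ up to an overall sign).

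The main obstacle I expect is the Grothendieck--Riemann--Roch bookkeeping in the second step: the exponent on $(1-te^\alpha)$ must come out to be exactly $R-1$ (so that the $R=1$ case is trivial and $R=2$ recovers $\theta_t$), and the analogous exponent on $(1-t)$ must agree. The reason this works is that, in the Atiyah--Bott realization, tensoring by $K_\Sigma^{(R-1)/2}$ along the $\Sigma$-direction corresponds at the level of the loop-rotation character to inserting the factor $\lambda_{-te^{\alpha}}(\mathrm{pt})^{R-1}=(1-te^\alpha)^{R-1}$ per weight, a calculation that should be done carefully but involves no new phenomenon beyond the linearity of Riemann--Roch. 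Once this factor is pinned down, the stated form of $\theta_{t,R}$ follows by direct comparison.
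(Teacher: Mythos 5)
Your proposal is correct and follows essentially the same route as the paper's proof: identify $N^{(R)*}_{\frak{M}}$ (up to shift) with the pushforward $R(\pi_{\frak{M}})_*(E(\frak{g})\otimes K^{1-R/2}_\Sigma)$, rewrite it in Atiyah--Bott generators via the family index theorem, and feed the resulting class into Theorem \ref{TWInd} with the dilogarithm resummation identifying $F_{\rho,t}$ and $H_t$. The only presentational difference is that the paper packages your ``weight-by-weight'' Riemann--Roch correction as the coefficient $(1-R)(g-1)$ of the point class $E_x\frak{g}$, whose trace $\Tr_{\lambda_{-t}(\frak{g})}(f)^{(1-R)(g-1)}$ at the fixed points then produces exactly your factors $(1-t)^{(R-1)\mathrm{rk}\,G}\prod_\alpha(1-te^\alpha)^{R-1}$ once absorbed into $\theta_{t,R}^{1-g}$.
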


\begin{proof}
Define the K-theory class 
\be
\Omega_R=R(\pi_{\frak{M}})_*(E(\frak{g})\otimes K^{1-R/2}_\Sigma)
\ee
on ${\mathfrak M}$ and observe that
$$\Omega_R[1] = N_{\mathfrak M}^{(R)*}$$
at the level of K-theory. This follows from the exact sequence in Remark 2.7 in \cite{BR}.
Now we rewrite the class $\Omega_R$ in terms of the standard Atiyah-Bott generators using the family index theorem as follows. Let $T_{\pi_{\frak{M}}}$ be the relative tangent sheaf along the projection $\pi_{\frak{M}} : \frak{M} \times \Sigma \ra \frak{M}$ and let $x^*$ be the generator of $H^2(\Sigma)$, Poincare dual to a point $x\in \Sigma$, {\it e.g.}~$x^*\cap [\Sigma] = 1$. Then we have that
\begin{eqnarray*}\label{FamInd}
\mathrm{ch}(\Omega_R) & = & \mathrm{Td}(T_{\pi_{\frak{M}}})\cup\mathrm{ch}(E(\frak{g})\otimes K^{1-\frac{R}{2}}_\Sigma)\cap [\Sigma]
\\
&=& (1+(1-g)x^*)\cup \ch(E(\frak{g}))\cup (1-(1-g)(2-R)x^*)\cap [\Sigma]\\
&=&(1+(g-1)(R-1)x^*)\cup\ch(E(\frak{g}))\cap [\Sigma]\\
&=&\mathrm{ch}(E_{\Sigma}\frak{g})+(1-R)(g-1)\ch(E_x\frak{g}).
\end{eqnarray*}
From this we conclude that, at the level of rational K-theory, we have
\be\label{LambdaSheave}
\Omega_R=(E_{\Sigma}\frak{g})\oplus(1-R)(g-1)(E_x\frak{g}).
\ee

Now, it is easy to compute the index of $\lambda_{-t}(\Omega_R) \otimes L^k$ using Theorem \ref{TWInd}. Since the coefficients of $E_{\Sigma}\frak{g}$ in  \eqref{LambdaSheave}  has no $R$-dependence, the equation \eqref{beq} has no dependence on $R$ and the solution set just becomes $F_{\rho,t}$. The place where $R$ enters is through the factor
\be
\Tr_{\lambda_{-t}(\Lie{g})^{(1-R)(g-1)}} = (1-t)^{(1-R)(g-1) \rk G} \prod_{\alpha}(1-te^{\alpha})^{(1-R)(g-1)}
\ee
in the summand of the formula in Theorem \ref{thm:Main5}.
\end{proof}

\begin{thm}\label{Vanish}
For $R\geq 2$ and all positive integers $i$ and $k$ we have that
$$ H^i\left({\mathfrak M}, S^*(N_{\mathfrak M}^{(R)*})\otimes \frak{L}^k\right) = 0.$$
\end{thm}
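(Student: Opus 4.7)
The plan is to deduce the vanishing from Teleman's cohomology-vanishing theorems on the moduli stack $\mathfrak{M}$ of $G$-bundles. By the preceding proposition it is equivalent to show $H^i(\mathfrak{M}_H^{(R)}, p^*\mathfrak{L}^k) = 0$ for $i \geq 1$, where $p : \mathfrak{M}_H^{(R)} \to \mathfrak{M}$ is the forgetful map. Since $g > 1$ and $R \geq 2$, the line bundle $K_\Sigma^{R/2}$ has positive degree, so the natural $\mathbb{C}^*$-action on $\mathfrak{M}_H^{(R)}$ rescaling the Higgs field has positive weights on all fiber directions of $p$, with fixed locus precisely the zero section $\mathfrak{M}$. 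The cohomology therefore splits into a direct sum of $\mathbb{Z}_{\geq 0}$-graded pieces whose $n$-th summand is $H^i(\mathfrak{M}, S^n(N_{\mathfrak{M}}^{(R)*}) \otimes \mathfrak{L}^k)$, and it suffices to show that each of these vanishes separately.

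To put the summands in a form amenable to Teleman's vanishing I would use the K-theoretic Koszul identity $S_t(-V) = \lambda_{-t}(V)$ together with $N_{\mathfrak{M}}^{(R)*} = \Omega_R[1]$ and the expression $\Omega_R = E_\Sigma\mathfrak{g} + (1-R)(g-1)E_x\mathfrak{g}$ established in the proof of Theorem \ref{thm:Main5}. Multiplicativity of $\lambda_{-t}$ then yields the key factorization
\[
S_t\bigl(N_{\mathfrak{M}}^{(R)*}\bigr) \;=\; \lambda_{-t}(E_\Sigma \mathfrak{g}) \cdot S_t(E_x\mathfrak{g})^{(R-1)(g-1)},
\]
where the hypothesis $R \geq 2$ is essential in making the exponent of $S_t(E_x\mathfrak{g})$ non-negative, so that no inverses of K-theory classes occur. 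Extracting the coefficient of $t^n$, each class $S^n(N_{\mathfrak{M}}^{(R)*})$ becomes a finite $\mathbb{Z}$-linear combination of products $\wedge^{n_1} E_\Sigma\mathfrak{g} \otimes \bigotimes_j S^{m_j}(E_x\mathfrak{g})$ built from the positive Atiyah-Bott generators $E_\Sigma\mathfrak{g}$ and $E_x\mathfrak{g}$.

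The concluding step is to apply Teleman's vanishing theorem from \cite{Te, Te1} (and its further extensions in \cite{FGT, FT, TW}): for $k \geq 1$ and any K-theory class $V$ of the above form, $H^{>0}(\mathfrak{M}, V \otimes \mathfrak{L}^k) = 0$. Summing over the finitely many terms in the expansion of $S^n(N_{\mathfrak{M}}^{(R)*})$ then yields the desired vanishing for every graded piece, and thus the theorem. The main obstacle will be verifying Teleman's precise positivity hypothesis for every term that appears: individual exterior powers of $E_\Sigma\mathfrak{g}$ and Schur functors of $E_x\mathfrak{g}$ are covered by his theorems, but the alternating signs inside $\lambda_{-t}(E_\Sigma\mathfrak{g})$ mean that one must carefully track the combinatorics of the expansion and show that each summand still lies in the positive cone on which Teleman's result gives $H^{>0}=0$, rather than relying on cancellations that would obscure the termwise argument.
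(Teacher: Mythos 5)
Your reduction to the graded pieces $H^i(\mathfrak{M}, S^n(N_{\mathfrak{M}}^{(R)*})\otimes \mathfrak{L}^k)$ is fine, but the core of your argument has a genuine gap: the identities $N_{\mathfrak{M}}^{(R)*}=\Omega_R[1]$ and $\Omega_R=(E_\Sigma\mathfrak{g})\oplus(1-R)(g-1)(E_x\mathfrak{g})$ hold only at the level of (rational) K-theory --- the paper is explicit about this --- and sheaf cohomology is not a K-theory invariant. A K-theoretic expansion of $S^n(N_{\mathfrak{M}}^{(R)*})$ into a $\mathbb{Z}$-linear combination of products of Atiyah--Bott generators determines the \emph{index} (this is exactly how Theorem \ref{thm:Main5} is proved), but it does not present the actual coherent complex $S^n(N_{\mathfrak{M}}^{(R)*})$ as a direct sum or filtration by those products, so there is no well-defined ``each summand'' whose higher cohomology you could kill termwise. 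The alternating signs you flag as a combinatorial nuisance are in fact the symptom: the expansion is a virtual class, and individual cohomology groups of a virtual class are meaningless. Compounding this, the vanishing statement you invoke --- $H^{>0}(\mathfrak{M}, V\otimes\mathfrak{L}^k)=0$ for arbitrary products of $\wedge^\bullet E_\Sigma\mathfrak{g}$ and $S^\bullet E_x\mathfrak{g}$ --- is not a theorem of \cite{Te1,Te}; Teleman's quantization/Borel--Weil--Bott results cover evaluation classes $E_xV$ under positivity hypotheses, and $E_\Sigma\mathfrak{g}$ is itself only a K-theory class (a total derived pushforward), not a bundle to which such a statement could be applied.

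The paper's actual proof is much heavier precisely because of this. It works with the genuine sheaf $S^rT$ via the uniformization $\mathfrak{M}=\mathfrak{G}_0\backslash\mathbf{X}$ by the thick flag variety, resolves the tangent complex by $\partial:\mathfrak{g}[\Sigma^\circ]_b\to\tilde{\mathfrak{g}}_\tau$, and replaces $S^r$ of the two-term complex by the Koszul-type complex \eqref{ess}. It then runs a spectral sequence whose $E_1$-page is controlled by (i) Teleman's Borel--Weil--Bott theorem on $\mathfrak{M}$, (ii) Theorem E of \cite{FGT} (the strong Macdonald conjecture) to kill the positive-degree $\mathfrak{G}_0$-cohomology of $S(\mathfrak{g}[[z]]^*_{\mathrm{res}})\otimes\mathbf{H}_\lambda^{(k)}$, and (iii) a new Proposition \ref{prop1} --- a genuine extension of Teleman's group-cohomology vanishing, itself proved by nodal degeneration --- to confine $E_1^{-s,q}$ to the region $q\le s$. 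Your proposal essentially assumes as known the content of steps (ii) and (iii), which are the hard part of the theorem; I would encourage you to look at how the index computation and the cohomology vanishing are deliberately separated in Section \ref{Proof}, since the contrast between them is exactly the point your argument elides.
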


We remark that the vanishing of the cohomology in negative degrees for $g>1$ also follows from the fact that, $\frak{M}$ is ``very good'' for $g>1$, as established in \cite{BD}. However, for $g=0$ we certainly know that there is non-trivial cohomology in negative degrees as we see from Proposition \ref{g=0}.

We present here a proof of Theorem \ref{Vanish}. The key idea of using the arguments presented in Section 4 of \cite{FT}, but now twisted by ${\mathcal O}(k)$ over the thick flag variety, is due to Constantin Teleman, and was past on to us by Nigel Hitchin. Hence, our arguments rely entirely on the beautiful, substantial and deep work by, first of all Teleman and then further his collaborations with in the first instance Fishel and Grojnowski \cite{FGT} and second with Frenkel \cite{FT}. Below we present our attempt to fill in the details of Teleman's suggestion. We strongly advice the reader to also consult Teleman's appendix to \cite{HL}, which for some parts actually proceeds at a somewhat different route then ours.

\begin{proof}[Proof of Theorem \ref{Vanish}] We present here the proof for the case where $R=2$, noting that the proof for $R>2$ is completely similar, one just have to twist with the appropriate power of $K_\Sigma$, which by the remarks in the second last paragraph on page 12 in \cite{FGT} does not cause any essential change in the arguments.

First we pick a point $x$ on $\Sigma$ and let $\Sigmao$ be the complement of $\{x\}$ on $\Sigma$. We shall further fix a formal coordinate $z$ centred at $x$, in which we do all Laurent expansions.
We use the presentation of the stack $\frak{M}$ as the double quotient $\frak{G}_0\backslash \bX$, where $\bX= \frak{G}/G[\Sigmao]$, $\frak{G} = G((z))$ and $\frak{G}_0 = G[[z]]$. The space $\bX$ is  the thick flag variety as in \cite{FT} and \cite{FGT}. Over $\bX$ we consider the bundle
$$ \Lie{g}[\Sigmao]_b = \left(\frak{G}\times \Lie{g}[\Sigmao]\right)/G[\Sigmao]$$
and the trivial bundle
$$ \tilde{\Lie{g}}_\tau = \bX\times \tilde{\Lie{g}},$$
where $\tilde{\Lie{g}} = \Lie{g}((z))/\Lie{g}[[z]]$.
Further we present the tangent complex of $\frak{M}$ as a two-step resolution
$$\partial: \Lie{g}[\Sigmao]_b \rightarrow \tilde{\Lie{g}}_\tau $$
where $\partial_{\varphi} = \text{Ad}_\varphi\circ {\mathcal E}_{x}$ at $\varphi\in \frak{G}_0$ and ${\mathcal E}_{x}$ is the Laurent expansion at $x$. 
For each non-negative integer $r$, we get the following complex 
\be\label{ess}
0 \ra \Lambda^r\Lie{g}[\Sigmao]_b \ra \Lambda^{r-1}\Lie{g}[\Sigmao]_b \otimes S^1( \tilde{\Lie{g}}_\tau) \ra \ldots  \ra S^r ( \tilde{\Lie{g}}_\tau)\ra 0.
\ee

In parallel to (4.3) of \cite{FT} we get that the $\frak{G}_0$-equivariant hyper-cohomology of this differential graded complex with $\Lie{g}[\Sigmao]_b $ in degree $-1$ gives
$$H^q(\frak{M},S^r T \otimes {\mathcal L}^k) = \bigoplus_{s+t=r} 
{\mathbb H}^q_{\frak{G}_0}\left( \bX, \Lambda^s\Lie{g}[\Sigmao]_b\otimes S^t( \tilde{\Lie{g}}_\tau)\otimes {\mathcal O}(k)\right).$$
As in \cite{FT}, we filter by $s$-degree to get a spectral sequence, whose $E_1$-page is
\begin{equation}
E_{1}^{-s,q} = H_{\frak{G}_0}^q\left(\bX, \Lambda^s\Lie{g}[\Sigmao]_b\otimes S^{r-s}( \tilde{\Lie{g}}_\tau) \otimes {\mathcal O}(k)\right).
\end{equation}

In order to compute this equivariant cohomology, we consider the Leray spectral sequence for the fibration ${\mathfrak M} \ra B\frak{G}_0$, whose   $E_2$-page is
$${\tilde E}_{2}^{p,q} = H^p(B\frak{G}_0, H^q\left(\bX, \Lambda^s\Lie{g}[\Sigmao]_b\otimes  {\mathcal O}(k)  \right) \otimes S^{r-s}( \tilde{\Lie{g}})).$$
By remark (8.10) in \cite{Te1}, we know that Theorem (0,5') also of \cite{Te1} applies to $H^q\left(\bX, \Lambda^s\Lie{g}[\Sigmao]_b\otimes  {\mathcal O}(k)  \right)$ since
$$ (\Lie{g}((z))/\Lie{g}[\Sigma^o])^* \cong \Lie{g}[\Sigma^o]$$
and we therefore get that
$${\tilde E}_{2}^{p,q} = \bigoplus_{\lambda \in \Lambda_k} \langle H^q\left(\bX, \Lambda^s\Lie{g}[\Sigmao]_b\otimes  {\mathcal O}(k)  \right)\mid {\mathbf H}_\lambda^{(k)} \rangle \otimes H^p_{\frak{G}_0}({\mathbf H}_\lambda^{(k)}\otimes S^{r-s}( \tilde{\Lie{g}})),$$
where the multiplicity space is given by ($V_\lambda$ is dual of the finite dimensional irreducible representation of $G$ associated to $\lambda$)
\be\label{MRLCX}
\langle H^q\left(\bX, \Lambda^s\Lie{g}[\Sigmao]_b\otimes  {\mathcal O}(k)  \right)\mid {\mathbf H}_\lambda^{(k)} \rangle  = H^q ({\mathfrak M}, \Lambda_{\bX}^s \otimes V_\lambda \otimes L^k).
\ee
Here $\Lambda_{\bX}^s$ denotes the descended bundle from $\bX$ of $\Lambda^s\Lie{g}[\Sigmao]_b$. We now use Theorem 3 of \cite{Te1/2} combined with Remark (8.10) from the same reference to conclude that
\be\label{MRLC}
H^q ({\mathfrak M}, \Lambda_{\bX}^s \otimes V_\lambda \otimes L^k) = H^q_{G[\Sigma^0]}( {\mathbf H}_0^{(k)} \otimes \Lambda^s\Lie{g}[\Sigmao] \otimes V_\lambda).
\ee

Now observe that there is a perfect pairing
$$ (\Lie{g}((z))/ \Lie{g}[[z]]) \times \Lie{g}[[z]] \ra {\mathbb C},$$
using a combination of the invariant inner product in $\Lie{g}$, the product of Laurent series and the residue of the resulting complex valued Laurent series. But then we conclude that
$$ H_{\frak{G}_0}^{*}\left(S(\tilde{\Lie{g}}) \otimes {\mathbf H}_\lambda^{(k)} \right) = 
 H_{\frak{G}_0}^{*}\left(S(\Lie{g}[[z]]_{\rm res}^*) \otimes {\mathbf H}_\lambda^{(k)} \right).$$
 Using the argument in (4.11) of \cite{FGT}, we get that
 $$ H^*_{\rm res} (\Lie{g}[z], \Lie{g}; S\Lie{g}[[z]]^*_{\rm res}\otimes {\mathbf H}_\lambda^{(k)}) = 
 H_{\frak{G}_0}^{*}\left(S(\Lie{g}[[z]]_{\rm res}^*) \otimes {\mathbf H}_\lambda^{(k)} \right).$$
But by Theorem E of \cite{FGT} we get that
\begin{equation}\label{Valg}
H^p_{\rm res} (\Lie{g}[z], \Lie{g}; S\Lie{g}[[z]]^*_{\rm res}\otimes {\mathbf H}_\lambda^{(k)}) = H^{p}\left(\Lie{g}[[z]], \Lie{g}; S(\Lie{g}[[z]])^*) \otimes {\mathbf H}_\lambda^{(k)} \right) =0
\end{equation}
for $p>0$.  
We conclude thus that
\be\label{E1}
\phantom{hhh} E_{1}^{-s,q} = \bigoplus_{\lambda \in \Lambda_k}   H^q_{G[\Sigma^0]}( {\mathbf H}_0^{(k)} \otimes \Lambda^s\Lie{g}[\Sigmao] \otimes V_\lambda) \otimes \left({\mathbf H}_\lambda^{(k)} \otimes S^{r-s}( \tilde{\Lie{g}})\right)^{\frak{G}_0}.
\ee

We see now by the above and Proposition \ref{prop1} below that $E_{1}^{-s,q}$ is concentrated in the region below the diagonal with $q\leq s$, ensuring the vanishing of all positive degree cohomologies in the abutment.

\end{proof}

We propose the following generalization of Teleman's group cohomology results in \cite{Te1}, which we needed in the above argument.

\begin{prop}\label{prop1}
We have the following vanishing of the group cohomology
\be\label{RV}
H^q_{G[\Sigmao]}( {\mathbf H}_0^{(k)} \otimes \Lambda^s\Lie{g}[\Sigmao] \otimes V_\lambda) =0
\ee
for all $q> s$.
\end{prop}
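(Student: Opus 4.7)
The plan is to reduce this vanishing to Teleman's cohomology vanishing already invoked in equation \eqref{Valg} of the proof of Theorem \ref{Vanish}, by absorbing the factor $\Lambda^s \Lie{g}[\Sigmao]$ in the coefficients into a shift of at most $s$ in the cohomological degree via a Koszul/Chevalley--Eilenberg-type spectral sequence.

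First, I would reinterpret $H^q_{G[\Sigmao]}(-)$ as a continuous (restricted, relative) Lie algebra cohomology $H^q_{\mathrm{res}}(\Lie{g}[\Sigmao], \Lie{g}; -)$ in the sense of \cite{FGT}, computed by an appropriately completed Chevalley--Eilenberg complex. This puts the present problem into the same framework as the computation in \eqref{Valg}, which gave $H^q(\Lie{g}[\Sigmao]; {\mathbf H}_0^{(k)} \otimes V_\lambda) = 0$ for $q > 0$. The base case $s=0$ of the proposition is then precisely this vanishing.

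Next, I would set up a Koszul-type bicomplex on the bigraded object $\Lambda^p \Lie{g}[\Sigmao]^* \otimes \Lambda^s \Lie{g}[\Sigmao] \otimes {\mathbf H}_0^{(k)} \otimes V_\lambda$, carrying the Chevalley--Eilenberg differential in the $p$-direction together with a contraction differential of bidegree $(1,-1)$ (or $(-1,+1)$, depending on convention) built from the canonical element in $\Lie{g}[\Sigmao]^* \otimes \Lie{g}[\Sigmao]$. A direct computation shows that the two differentials anticommute. One of the associated spectral sequences collapses via the acyclicity of the Koszul contraction, and its abutment is just $H^*(\Lie{g}[\Sigmao]; {\mathbf H}_0^{(k)} \otimes V_\lambda)$, concentrated in degree zero by the above base case. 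The other spectral sequence has $E_1$-page assembled from precisely the groups $H^q(\Lie{g}[\Sigmao]; \Lambda^s \Lie{g}[\Sigmao] \otimes {\mathbf H}_0^{(k)} \otimes V_\lambda)$ that we wish to bound, with a uniform degree shift of $s$ between the two filtrations. Matching the two computations forces the vanishing of $H^q$ for $q > s$, which is the statement of the proposition.

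The main obstacle I anticipate is the compatibility of the Koszul contraction with the restricted/semi-infinite completions used by Teleman: both $\Lie{g}[\Sigmao]$ and ${\mathbf H}_0^{(k)}$ are infinite-dimensional, so the pairing $\Lie{g}[\Sigmao]^* \otimes \Lie{g}[\Sigmao] \to {\mathbb C}$ must descend sensibly to the completed Chevalley--Eilenberg complex, and one has to check that the acyclicity of the Koszul contraction survives in this restricted setting. The restricted-cohomology machinery of \cite{FGT}, especially the arguments surrounding (4.11) there, is designed precisely for this type of question, so the verification should be tractable in that framework, but this is where the substantive analytic work of the proof would lie.
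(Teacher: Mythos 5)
Your reduction of the general case to $s=0$ via a Koszul bicomplex is where the argument breaks down, and the failure is not a technicality of completions: the implication you are after --- from $H^{>0}(\Lie{g}[\Sigmao];{\mathbf H}_0^{(k)}\otimes V_\lambda)=0$ deduce $H^{>s}(\Lie{g}[\Sigmao];\Lambda^s\Lie{g}[\Sigmao]\otimes{\mathbf H}_0^{(k)}\otimes V_\lambda)=0$ by purely formal means --- is false for Lie algebra cohomology in general. As a finite-dimensional test, take $\Lie{g}=\fraksl_2$ and $M$ the adjoint representation: Whitehead's lemma gives $H^{>0}(\Lie{g};M)=0$, yet $H^3(\Lie{g};\Lambda^1\Lie{g}\otimes M)\cong H^3(\Lie{g};\C)\otimes(\Lie{g}\otimes\Lie{g})^{\Lie{g}}\neq 0$, violating the bound $q\le s=1$. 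Since your bicomplex uses nothing specific to $\Lie{g}[\Sigmao]$ or to ${\mathbf H}_0^{(k)}$ beyond the base case, it would prove this false statement, so at least one step must fail: in fact the operators on $\Lambda^p\Lie{g}^*\otimes\Lambda^s\Lie{g}$ induced by the canonical element have bidegree $(-1,-1)$ or $(+1,+1)$, not $(1,-1)$; the contraction does not simply anticommute with the Chevalley--Eilenberg differential (the interaction of the two is precisely the nontrivial content of the strong Macdonald computations in \cite{FGT}); and the second spectral sequence does not degenerate in the range you need. A smaller point: \eqref{Valg} is not the $s=0$ case of the proposition --- it is a local statement about the pair $(\Lie{g}[z],\Lie{g})$ with $S\Lie{g}[[z]]^*_{\rm res}$-coefficients; the correct base case is Teleman's vanishing theorem for conformal blocks over $\Sigmao$.

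The bound $q\le s$ is a genuinely deep input, and the paper obtains it from three ingredients that have no counterpart in your proposal. First, the Fishel--Grojnowski--Teleman strong Macdonald theorems: $H^*(\bX_{\Sigma'},\Lambda^*\Lie{g}[\Sigma']_b)$ is a free skew-commutative algebra whose generators sit only in $H^m(\Lambda^m)$ and $H^m(\Lambda^{m+1})$, hence the whole ring is concentrated in cohomological degree at most the exterior degree. Second, a K\"unneth-type factorization (Theorem \ref{2.4C}, proved by degenerating $\Sigma'$ to a nodal rational curve and controlling the once-punctured $\P^1$ case) which splits the ${\mathbf H}_0^{(k)}\otimes V_\lambda$ coefficients off into an $H^0$-factor, so that the degree bound for $\Lambda^s\Lie{g}[\Sigma']$-coefficients persists with the full coefficients. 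Third, the van Est spectral sequence (Proposition \ref{specTel}) and a surjectivity argument for its edge homomorphism, which transfer the resulting bound on Lie algebra cohomology to the group cohomology of $G[\Sigmao]$. If you want to salvage your strategy, the Koszul manipulation would have to be replaced by an appeal to these specific structural results rather than to a formal bicomplex.
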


We will follow the lines of the arguments presented in \cite{Te0} and \cite{Te1/2} in order to establish the vanishing of this cohomology by a combination of relations between the group cohomology and  Lie-algebra cohomology, invariance of the Lie-algebra cohomology under degeneration of the curve $\Sigmao$ to a nodal curve, followed by the behavior of the Lie algebra cohomology under resolution of the nodal curve and finally a control of the cohomology for once-punctured ${\mathbb P}^1$. However, these steps are best performed in reverse order. 

Let $\Sigma'$ be a smooth curve obtained from a compact curve $\overline{\Sigma}'$ by removing one point.
We start by recalling that Proposition 10.5 from \cite{FGT} applies to $\Sigma'$, but as remarked in \cite{FT} on page 12, one can twist with a line bundle, which we in this case choose to be $T\Sigma'$ to obtain that

\begin{thm}[Fishel, Grojnowski, Teleman]
We have a natural isomorphism
$$ H^*(\Lie{g}[\Sigma'], \Lambda^s(\Lie{g}[\Sigma'])) = H^*(\bX_{\Sigma'},  \Lambda^s\Lie{g}[\Sigma']_b) \otimes H^*(G(\Sigma),{\mathbb C}).$$
\end{thm}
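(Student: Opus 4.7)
The plan is to derive the stated isomorphism from Proposition 10.5 of \cite{FGT}, applied to the punctured curve $\Sigma'$, by performing the line-bundle twist described on p.~12 of \cite{FT}, specialised to the line bundle $T\Sigma'$. Concretely, \cite[Prop.~10.5]{FGT} establishes the analogous isomorphism with the residue-dualised module $\Lie{g}[\Sigma']^{*}_{\mathrm{res}}$ in place of $\Lie{g}[\Sigma']$ on both sides:
$$
H^*(\Lie{g}[\Sigma'], \Lambda^s \Lie{g}[\Sigma']^{*}_{\mathrm{res}}) \;\cong\; H^*(\bX_{\Sigma'}, \Lambda^s \Lie{g}[\Sigma']^{*}_{b,\mathrm{res}}) \otimes H^*(G(\Sigma),\mathbb{C}).
$$
Their proof is built out of three pieces: a Koszul-type resolution that replaces the continuous Lie algebra cohomology of $\Lie{g}[\Sigma']$ by sheaf cohomology on the thick flag variety $\bX_{\Sigma'}$; a Schubert-cell-by-cell Borel-Weil-Bott analysis that computes each weight multiplicity; and a Shapiro-lemma step that extracts the purely topological factor $H^*(G(\Sigma),\mathbb{C})$ from the compact part of the gauge group.

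The second step is to convert the residual-dual statement into the form asserted here. Residue duality on the formal disc around $x$ pairs $\Lie{g}((z))/\Lie{g}[\Sigma']$ with $\Lie{g}[\Sigma']$ via the Killing form combined with $\mathrm{Res}_x$, so that globally one has $\Lie{g}[\Sigma']^{*}_{\mathrm{res}} \cong \Lie{g}[\Sigma'] \otimes T\Sigma'$. Substituting this identification on both sides of the FGT isomorphism, and invoking the remark in \cite[p.~12]{FT} that all arguments of \cite{FGT} transplant verbatim after twisting the coefficient by an arbitrary line bundle on $\Sigma'$, yields the claim. The $H^*(G(\Sigma),\mathbb{C})$ factor is unaffected because it reflects the homotopy type of the compact gauge group and is insensitive to the choice of coefficient twist.

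The hard part will be tracking the $T\Sigma'$ twist through the Schubert stratification of $\bX_{\Sigma'}$ and ensuring that the Borel-Weil-Bott type vanishing used on each stratum in \cite{FGT} survives the twist. Since all local computations near the puncture only see the formal disc at $x$, on which $T\Sigma'$ trivialises, this reduces to a compatibility check rather than a fundamentally new calculation: the weight shifts induced by the twist are uniform across strata and leave the combinatorics of the Koszul-BBW spectral sequence intact. Writing out the argument in full therefore amounts to verifying this twist-compatibility in each of the three structural inputs of \cite{FGT}, which is precisely the content of the brief remark in \cite{FT} on p.~12.
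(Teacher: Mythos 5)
Your proposal matches the paper's treatment: the paper gives no independent proof of this statement, obtaining it exactly as you do by applying Proposition 10.5 of \cite{FGT} to the once-punctured curve $\Sigma'$ together with the line-bundle twist noted on p.~12 of \cite{FT}, specialised to $T\Sigma'$. Your additional commentary on residue duality and on tracking the twist through the Schubert/Koszul machinery of \cite{FGT} is a reasonable elaboration of why that citation suffices, but it goes beyond anything the paper itself records.
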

 Further Theorem D,(i) form the same reference applied to $\Sigma'$ gives that
 \begin{thm}[Fishel, Grojnowski, Teleman]
 The ring $H^*(\bX_{\Sigma'}, \Lambda^*\Lie{g}[\Sigma']_b)$ is the free skew-commutative algebra generated by copies of $\Omega^0[T\Sigma']$ and $\Omega^0[\Sigma']$ in $$H^m(\bX_{\Sigma'}, \Lambda^m\Lie{g}[\Sigma']_b) \text{ and }H^m(\bX_{\Sigma'}, \Lambda^{m+1}\Lie{g}[\Sigma']_b)$$ respectively.
 \end{thm}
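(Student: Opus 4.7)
The plan is to exploit the close relationship, made explicit in the preceding Fishel-Grojnowski-Teleman theorem, between $H^*(\bX_{\Sigma'}, \Lambda^*\Lie{g}[\Sigma']_b)$ and the Lie algebra cohomology $H^*(\Lie{g}[\Sigma'], \Lambda^*\Lie{g}[\Sigma'])$, which reduces the ring-theoretic question to one about the cohomology of an infinite-dimensional Lie algebra of regular $\Lie{g}$-valued maps on the affine curve $\Sigma'$.

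First, I would construct the proposed generators explicitly from invariant-theoretic data on $\Lie{g}$. For an Ad-invariant polynomial $P$ of degree $m+1$ on $\Lie{g}$, contracting $P$ via the residue pairing with a regular function $f\in \Omega^0[\Sigma']$ produces a natural cocycle of bidegree $(m, m+1)$; similarly, pairing an invariant polynomial of degree $m$ with a holomorphic vector field $v\in \Omega^0[T\Sigma']$ (using the Lie derivative action on sections of $\Lie{g}[\Sigma']_b$) produces a cocycle of bidegree $(m, m)$. These constructions are natural in $\Sigma'$ and Ad-equivariance guarantees they descend to well-defined classes. Assembling them defines a morphism from the free skew-commutative algebra on the proposed generators to $H^*(\bX_{\Sigma'}, \Lambda^*\Lie{g}[\Sigma']_b)$.

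Next, I would establish that this morphism is an isomorphism via a degeneration argument. For the base case $\Sigma' = \mathbb{A}^1 = \mathbb{P}^1 \setminus \{\infty\}$, the Lie algebra $\Lie{g}[z]$ is standard and its cohomology with coefficients in its exterior algebra can be computed directly by Koszul-type techniques, exhibiting the advertised polynomial-algebra structure. For general $\Sigma'$, I would degenerate $\overline{\Sigma}'$ to a chain of rational curves meeting at nodes, invoke the invariance of Lie algebra cohomology under such degenerations together with its compatibility with normalization, and finally lift the conclusion back from Lie algebra cohomology to $\bX_{\Sigma'}$-cohomology using the preceding theorem combined with the vanishing of $H^*(G(\Sigma), \mathbb{C})$ in the appropriate degrees.

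The main obstacle I anticipate is controlling completeness of the generating set and absence of relations simultaneously. Surjectivity must rule out any ``unexpected'' generators, which should follow from a careful analysis of the Leray spectral sequence for $\bX_{\Sigma'}\to BG[\Sigma']$, combined with the vanishing results from \cite{Te1, FGT} already used in the proof of Theorem \ref{Vanish}. Freeness, on the other hand, requires transcendence of the generators, most cleanly verified in the $\mathbb{A}^1$ model and then transported to general $\Sigma'$ by naturality under the restriction maps associated with inclusions of affine open subsets.
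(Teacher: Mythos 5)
The first thing to note is that the paper does not prove this statement at all: it is quoted, verbatim, as Theorem D,(i) of \cite{FGT} applied to $\Sigma'$, with the twist by $T\Sigma'$ justified by the remark on page 12 of \cite{FT}. So the honest benchmark for your attempt is the proof in \cite{FGT}, not anything in the present paper.

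Measured against that, your sketch has a genuine gap exactly where the difficulty lives: the base case. You assert that for $\Sigma'=\mathbb{A}^1$ the cohomology of $\Lie{g}[z]$ with coefficients in $\Lambda^*\Lie{g}[z]$ ``can be computed directly by Koszul-type techniques, exhibiting the advertised polynomial-algebra structure.'' It cannot: this computation is essentially the strong Macdonald conjecture, which is the main theorem of \cite{FGT}, and its proof there proceeds not by any Koszul resolution but by Hodge theory on the thick flag variety --- a Nakano-type vanishing theorem together with an explicit positivity estimate for a Laplacian. ($\Lie{g}[z]$ is neither abelian nor free, and the conjectural answer resisted direct algebraic attack for decades.) By declaring the base case standard you have deferred the entire content of the theorem into a step that is, in fact, the theorem. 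The remaining ingredients are reasonable in outline: the generators really are built from the invariant polynomials of $\Lie{g}$ in degrees $m+1$ for the exponents $m$, paired with $\Omega^0[\Sigma']$ and $\Omega^0[T\Sigma']$, and degeneration to nodal rational curves in the style of \cite{Te1/2} is precisely the mechanism this paper uses for Theorem \ref{2.4C}. But even there you would need to establish coherence and local freeness of the relevant homology sheaves over the degenerating family for the coefficients $\Lambda^s\Lie{g}[\Sigmao_t]$ --- the paper has to argue this separately in its proof of Theorem \ref{2.4C} --- and your appeal to ``the vanishing of $H^*(G(\Sigma),\mathbb{C})$'' is off: that factor is a nontrivial exterior algebra appearing as a tensor factor in the preceding Fishel--Grojnowski--Teleman isomorphism, which must be split off, not shown to vanish.
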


We now propose the following generalization of Theorem 2.4 in \cite{Te1/2}.

\begin{thm}\label{2.4C}
We have that
$$ H^*(\Lie{g}[\Sigma'], {\mathbf H}_0^{(k)} \otimes \Lambda^s\Lie{g}[\Sigma'] \otimes V_\lambda) \cong H^0(\Lie{g}[\Sigma'], {\mathbf H}_0^{(k)}  \otimes V_\lambda) \otimes H^*(\Lie{g}[\Sigma'], \Lambda^s(\Lie{g}[\Sigma'])).$$
\end{thm}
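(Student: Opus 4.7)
The plan is to extend the proof of Theorem 2.4 of \cite{Te1/2} (which is the $s=0$ case) by carrying the additional exterior power coefficient $\Lambda^s\Lie{g}[\Sigma']$ through the same degeneration scheme, combined with the two Fishel--Grojnowski--Teleman theorems quoted just above.

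First I would set up a Leray-type spectral sequence for the fibration $\bX_{\Sigma'} \to B\Lie{g}[\Sigma']$, in direct analogy with the spectral sequence argument already displayed in the proof of Theorem \ref{Vanish}. Its $E_2$-page should decompose as a sum over highest weights $\mu\in\Lambda_k$, with each term being a product of a multiplicity space $\langle H^q(\bX_{\Sigma'}, \Lambda^s\Lie{g}[\Sigma']_b\otimes {\mathcal O}(k) \otimes V_\lambda) \mid {\mathbf H}_\mu^{(k)} \rangle$ and an equivariant cohomology group, the latter of which vanishes in positive degrees by (\ref{Valg}). This reduces the problem to an analysis of $H^*(\bX_{\Sigma'}, \Lambda^s\Lie{g}[\Sigma']_b \otimes {\mathcal O}(k) \otimes V_\lambda)$ alone.

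Second, I would apply the degeneration strategy of \cite{Te0, Te1/2} to $\overline{\Sigma'}$: degenerate to a chain of $\mathbb{P}^1$'s and reduce to the once-punctured $\mathbb{P}^1$. There the structure theorem (Theorem D of \cite{FGT}) presents $H^*(\bX_{\Sigma'}, \Lambda^*\Lie{g}[\Sigma']_b)$ as a free skew-commutative algebra, and a module-over-algebra argument---using Teleman's original Theorem 2.4 to provide the cohomological concentration in degree zero for ${\mathbf H}_0^{(k)}\otimes V_\lambda$---should identify $H^*(\bX_{\Sigma'}, \Lambda^s\Lie{g}[\Sigma']_b \otimes {\mathcal O}(k) \otimes V_\lambda)$ as a free module of rank $\dim H^0(\Lie{g}[\Sigma'], {\mathbf H}_0^{(k)}\otimes V_\lambda)$. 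Combining with the first quoted Fishel--Grojnowski--Teleman theorem then yields the claimed factorization.

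The main obstacle will be the degeneration step. For $s=0$ the coefficient module does not depend on $\Sigma'$, but the exterior power $\Lambda^s\Lie{g}[\Sigma']$ varies non-trivially with the curve. One must therefore construct a coherent family of Lie algebra modules over the total space of the degeneration and verify, via a relative Chevalley--Eilenberg argument, that both the exterior power coefficients and the Lie algebra cohomology behave continuously across the nodal limit. Once this is under control, the rest of the argument proceeds in close parallel with the proof of Theorem \ref{Vanish} above.
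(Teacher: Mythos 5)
Your overall route coincides with the paper's: degenerate $\overline{\Sigma}'$ to a nodal curve whose normalization is rational, reduce to the once-punctured $\mathbb{P}^1$, and invoke the Fishel--Grojnowski--Teleman structure theorems there. But the step you yourself flag as the main obstacle --- carrying the curve-dependent coefficient $\Lambda^s\Lie{g}[\Sigma']$ through the degeneration --- is left as a promissory note (``construct a coherent family \dots verify via a relative Chevalley--Eilenberg argument \dots behave continuously''), and that is precisely where the content of the proof lies. The paper fills it in concretely: it introduces the filtration ${\mathcal F}^d(\Lie{g}[\Sigmao_t]) = \{\xi \mid \mathrm{ord}_{p_i}(\xi)\le d\}$ by pole order at the marked points, observes that the $\Lie{g}[\Sigmao_t]$-action on ${\mathbf H}_0^{(k)}\otimes\Lambda^s\Lie{g}[\Sigmao_0]\otimes V_\lambda$ is filtered once one also uses the energy filtration on ${\mathbf H}_0^{(k)}$ from \cite{TUY}, and notes that the associated graded is finite dimensional in each piece; this is exactly what allows the coherence and local-freeness arguments of Sections 4 and 5 of \cite{Te1/2} to apply verbatim to $\mathrm{Gr}({\mathbf H}_0^{(k)}\otimes\Lambda^s\Lie{g}[\Sigmao_0])\otimes V_\lambda$, yielding invariance of the Lie algebra cohomology across the nodal limit. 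Without an explicit filtration of this kind your ``coherent family'' has no handle, so as written the central step does not go through.

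Two further points where your sketch omits or diverges from what is needed. First, after degenerating you must still pass from the nodal curve $\Sigma_0$ to its normalization $\tilde{\Sigma}_0\cong\mathbb{P}^1$; the paper does this via the subalgebra $\frak{h}\subset\Lie{g}[\Sigmao_0]$ of elements vanishing at the nodes, a commutative diagram of short exact sequences of Lie algebras, and a comparison of the two resulting spectral sequences with a semisimplicity argument as on pp.~256--257 of \cite{Te1/2} --- this step is entirely absent from your outline. Second, for the base case on the once-punctured $\mathbb{P}^1$ the paper does not argue via Theorem D of \cite{FGT} plus a free-module argument; it runs the spectral sequence from the proof of Theorem F of \cite{FGT} on $X=G((z))/G[z^{-1}]$, which converges to the known ring $H^{*,*}(BG,\mathbb{C})$, and reads off the answer. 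Your Theorem-D route is plausible, but the concentration statement you want to import from ``Teleman's original Theorem 2.4'' is not available on $\mathbb{P}^1$ in the form you need: the whole point of this part of the argument is to supply a replacement for Theorem 3.1 of \cite{Te1/2}, not to quote it.
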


We will now assume Theorem \ref{2.4C} and following \cite{Te1/2}, consider the van Est spectral sequence which relates the cohomology of $G[\Sigma']$ and that of the Lie algebra $\Lie{g}[\Sigma']$. We recall Propostion 6.1 from this reference

\begin{prop}[Teleman]\label{specTel}
There is a spectral sequence with
$$ E^{p,q}_2 = H^p_{G[\Sigma']}({\mathbf H}_0^{(k)} \otimes \Lambda^s\Lie{g}[\Sigma'] \otimes V_\lambda \otimes H^q_s(G[\Sigma'])),$$
which converges to $H^*(\Lie{g}[\Sigma'], {\mathbf H}_0^{(k)} \otimes \Lambda^s\Lie{g}[\Sigma'] \otimes V_\lambda) $ and is compatible with products.
\end{prop}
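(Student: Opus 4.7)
The plan is to construct a first-quadrant double complex $K^{\bullet,\bullet}$ whose two associated spectral sequences yield, respectively, the stated $E_2$-page and the claimed abutment $H^*(\Lie{g}[\Sigma'], W)$, where $W = {\mathbf H}_0^{(k)}\otimes\Lambda^s\Lie{g}[\Sigma']\otimes V_\lambda$. The natural candidate is
\[
K^{p,q} \;=\; C^p_{d}\bigl(G[\Sigma'];\, C^q_{\mathrm{CE}}(\Lie{g}[\Sigma']; W)\bigr),
\]
where $C^p_d$ denotes differentiable group cochains (in Teleman's ind-algebraic sense), $C^q_{\mathrm{CE}}$ is the Chevalley--Eilenberg complex, and $G[\Sigma']$ acts diagonally on the coefficients via its adjoint action on $\Lie{g}[\Sigma']$ and its given action on $W$. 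The two filtrations of $K^{\bullet,\bullet}$ produce the two spectral sequences which I would then identify.

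Filtering by $q$-degree first, one computes vertical Chevalley--Eilenberg cohomology in the coefficients, then differentiable group cohomology. Since $G[\Sigma']$ acts on $W$ independently of its adjoint action on $\Lie{g}[\Sigma']$, the $W$-factor passes through the vertical cohomology and leaves a contribution from the Chevalley--Eilenberg cohomology of $\Lie{g}[\Sigma']$ acting on itself, extracted via the standard van Est principle of identifying the smooth group cohomology $H^q_s(G[\Sigma'])$ with the Lie-algebra cochain complex of $\Lie{g}[\Sigma']$ relative to a maximal compact form. This produces the claimed
\[
E_2^{p,q} \;=\; H^p_{G[\Sigma']}\!\bigl(W \otimes H^q_s(G[\Sigma'])\bigr).
\]

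Filtering by $p$-degree first, the horizontal complex at each fixed $q$ computes the differentiable cohomology of $G[\Sigma']$ acting on $C^q_{\mathrm{CE}}(\Lie{g}[\Sigma']; W)$ by simultaneous conjugation and module action. A contractibility argument modeled on the classical van Est proof, using a smooth local section of the exponential map at the identity of $G[\Sigma']$ combined with the standard simplicial homotopy on the bar resolution, shows this horizontal complex is acyclic away from degree zero and identifies its $H^0$ with the plain Chevalley--Eilenberg complex $C^q_{\mathrm{CE}}(\Lie{g}[\Sigma']; W)$. Taking vertical cohomology afterwards then recovers $H^{p+q}(\Lie{g}[\Sigma'], W)$, confirming the abutment. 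Multiplicativity follows from the natural cup product on $K^{\bullet,\bullet}$ induced by tensor product of cochains together with the product structure on $W$; both filtrations preserve this pairing.

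The principal obstacle is the infinite-dimensional, ind-pro-algebraic nature of $G[\Sigma']$: one must fix a precise class of differentiable cochains for which the van Est homotopy can be realized ind-algebraically, and verify convergence of both spectral sequences even though $p$ and $q$ range unboundedly. My plan for handling this is to follow Teleman's strategy of approximating $G[\Sigma']$ by finite-dimensional subquotients indexed by vanishing orders at a base point and passing to a filtered colimit, exploiting the admissibility of the integrable highest-weight module ${\mathbf H}_0^{(k)}$ to control the limit on the coefficient side and to guarantee that the spectral sequence assembled on the finite-dimensional pieces inherits a well-defined limit.
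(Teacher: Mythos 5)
First, a point of comparison: the paper does not actually prove this proposition --- it is recalled verbatim as Proposition 6.1 of Teleman's \cite{Te1/2} --- so the relevant benchmark is Teleman's construction of the van Est spectral sequence. Your overall strategy (a double complex whose two filtrations give the stated $E_2$-page and the abutment) is indeed how that spectral sequence is built, but the specific double complex you chose, $K^{p,q}=C^p_d\bigl(G[\Sigma'];\,C^q_{\mathrm{CE}}(\mathfrak{g}[\Sigma'];W)\bigr)$ with $W={\mathbf H}_0^{(k)}\otimes\Lambda^s\mathfrak{g}[\Sigma']\otimes V_\lambda$, is the wrong one, and both of your filtration computations fail for it. (i) The vertical Chevalley--Eilenberg cohomology of $\Lambda^q\mathfrak{g}[\Sigma']^*\otimes W$ is $H^q(\mathfrak{g}[\Sigma'];W)$, not $W\otimes H^q_s(G[\Sigma'])$: the CE differential couples $\Lambda^q\mathfrak{g}[\Sigma']^*$ to $W$ through the module structure, and the claim that $G[\Sigma']$ ``acts on $W$ independently of its adjoint action'' is false since $W$ contains the factor $\Lambda^s\mathfrak{g}[\Sigma']$ on which the action is precisely adjoint. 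Moreover $H^q_s(G[\Sigma'])$ in the statement is the singular (topological) cohomology of the group $G[\Sigma']$ as a space --- the cohomology of the fibre in a Leray-type picture --- not a relative Lie-algebra complex attached to a maximal compact form, and it is not $H^q(\mathfrak{g}[\Sigma'];\mathbb{C})$. (ii) In the other filtration, $\Lambda^q\mathfrak{g}[\Sigma']^*\otimes W$ is not an acyclic $G[\Sigma']$-module, and its $H^0_{G[\Sigma']}$ is the subspace of invariants, never the whole CE complex; no contractibility or bar-resolution homotopy can alter what $H^0$ of group cohomology is.

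The missing ingredient is the algebra of functions on the group. The double complex must be $K^{p,q}=C^p_{G[\Sigma']}\bigl(\mathcal{O}(G[\Sigma'])\otimes\Lambda^q\mathfrak{g}[\Sigma']^*\otimes W\bigr)$, i.e.\ group cochains valued in the (algebraic) de Rham complex of the group tensored with $W$, with $G[\Sigma']$ acting on $\mathcal{O}(G[\Sigma'])$ by translation. The coefficients are then coinduced, so one filtration collapses in positive $p$-degree and identifies the degree-zero row with $\bigl(\mathcal{O}(G[\Sigma'])\otimes\Lambda^q\mathfrak{g}[\Sigma']^*\otimes W\bigr)^{G[\Sigma']}\cong C^q_{\mathrm{CE}}(\mathfrak{g}[\Sigma'];W)$, the de Rham differential inducing the CE differential; this gives the abutment $H^*(\mathfrak{g}[\Sigma'];W)$. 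The other filtration first computes the de Rham cohomology of the group, producing $E_2^{p,q}=H^p_{G[\Sigma']}(W\otimes H^q_s(G[\Sigma']))$. Your closing concerns about ind-algebraic cochains, convergence, and approximation by finite-dimensional subquotients are the right ones, but they only become meaningful once the double complex itself is fixed.
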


It is now simple to argue as in section VII.1  and VII.2 in \cite{Te1} leading up to (7.3), that the edge homomorphism 
$$ H^{p+q}(\Lie{g}[\Sigma'], {\mathbf H}_0^{(k)} \otimes \Lambda^s\Lie{g}[\Sigma'] \otimes V_\lambda) \ra H^p_{G[\Sigma']}({\mathbf H}_0^{(k)} \otimes \Lambda^s\Lie{g}[\Sigma'] \otimes V_\lambda )\otimes H^q_s(G[\Sigma'] )$$
in the van Est spectral sequence from Proposition \ref{specTel} will be surjective for $p>s$, hence we will get the vanishing of the group cohomology in Proposition \ref{prop1}.

\begin{proof}[Proof of Theorem \ref{2.4C}]
This proof proceeds as the proof of theorem 2.4.\ in \cite{Te1}.
We can place $\bar\Sigma'$ in a family of curves over some parameter space, with a section through $\bar \Sigma'-\Sigma'$ over some point in the parameter space, in such a way that the fiber over some other point, say $\Sigma_0$, is a nodal degeneration of $\bar\Sigma'$, whose normalization $\tilde{\Sigma}_0$ is a copy of ${\mathbb P}^1$. Let $\Sigmao_0$ denote the complement of the value of the section in $\Sigma_0$. We assume this point is not one of the $g$ nodes of $\Sigma_0$. We can now apply Theorem 2.3 in \cite{Te1/2} to 
obtain
$$ H^q(\Lie{g}[\Sigmao], \Lie{g}; {\mathbf H}_0^{(k)} \otimes \Lambda^s\Lie{g}[\Sigmao_0] \otimes V_\lambda) \cong H^q(\Lie{g}[\Sigmao_0], \Lie{g}; {\mathbf H}_0^{(k)} \otimes \Lambda^s\Lie{g}[\Sigmao_0] \otimes V_\lambda) .$$

Theorem 2.3 in \cite{Te1/2} applies directly in the case where $s=0$ and the proof applies to the case $s>0$ as well, as we will now argue. We consider the following filtration on $\Lie{g}(\Sigmao_t)$
$$ {\mathcal F}^d(\Lie{g}[\Sigmao_t]) = \{ \xi\in \Lie{g}[\Sigmao_t]\mid  \text{ord}_{p_i}(\xi) \leq d\},$$
where $t$ runs through the parameter space. We observe that the action of $\Lie{g}[\Sigmao_t]$ on ${\mathbf H}_0^{(k)} \otimes \Lambda^s\Lie{g}[\Sigmao_0] \otimes V_\lambda$ is filtered, where we use the filtration on ${\mathbf H}_0^{(k)}$ induced by the energy grading used in \cite{TUY}. Furthermore we see that the associated graded space to this filtrated space ${\mathcal F}^d(\Lie{g}[\Sigmao_t])$, is finite-dimensional in each piece. The discussion in Section 4 and 5 of \cite{Te1/2} can now be applied as is for the $\Lie{g}[\Sigmao_t]$-module  $\rm{Gr}({\mathbf H}_0^{(k)} \otimes \Lambda^s\Lie{g}[\Sigmao_0]) \otimes V_\lambda$, to establish that the corresponding homology sheaves of our family of pointed curves of the affine line are first coherent and secondly locally free, which gives the desired result.

Continuing with the argument in \cite{Te1/2}, now entering the part contained in Section 3 of that reference, we define the subalgebra $\frak{h} \subset \Lie{g}[\Sigmao_0]$ to be the kernel of the evaluation map at the nodes. We then have the following commutative diagram of short exact sequences of Lie-algebras
\be \label{CDLie}
\phantom{hhhhh}\begin{CD}
 @.  0   @. 0 @.  0 @.\\
@. @VVV        @VVV  @VVV @.\\
0 @>>>  \frak{h}   @>>>  \frak{h} @>>>  0 @>>> 0\\
@. @VVV        @VVV  @VVV @.\\
0 @>>>  \Lie{g}[\Sigmao_0]  @>>>  \Lie{g}[\tilde{\Sigma}^\circ_0]@>>>  \Lie{g}^{g} @>>>  0\\
@. @VVV        @VVV  @VVV @.\\
0 @>>>  \Lie{g}^{g} @>>>  \Lie{g}^{2g} @>>>  \Lie{g}^{g} @>>>  0\\
@. @VVV        @VVV  @VVV @.\\
 @.  0   @. 0 @.  0 @. 
\end{CD}
\ee

We now consider the spectral sequences $E$ and $\tilde{E}$ for the second and third columns respectively, with coefficients in the relevant modules, together with the induced map between them from the above diagram. Now the argument proceeds exactly as on page 256 and 257 in \cite{Te1/2}, again using the obvious filtration ${\mathcal F}$ from above, to get semi-simplicity in the same way as is done at this place in \cite{Te1/2}. 

For the once-punctured ${\mathbb P}^1$, we need a replacement of Theorem 3.1 in \cite{Te1/2}.
To this end, we can simply apply the proof of Theorem F in \cite{FGT}. Let $X = G((z))/G[z^{-1}]$. Without the assumption of $G$ being simply-laced, we can still consider the same spectral sequence, which converges to $H^*(\frak{M}({\mathbb P}^1), \Omega^*(k)) = H^{*,*}(BG, {\mathbb C})$ and whose $E_1$-page is given by 
$$ E^{p,q}_1 = \bigoplus_{\lambda\in \Lambda_k} H^q(X, \Omega_X^{r-p}(k)\otimes V_\lambda)  \otimes \left({\mathbf H}_\lambda^{(k)} \otimes S^{p}( \Lie{g}[[z]]^*)\right)^{\frak{G}_0}.$$
But we know $H^{*,*}(BG, {\mathbb C})$ thus we get Theorem \ref{2.4C} in this simple ${\mathbb P}^1$ case with one marked point, which completes the proof.
\end{proof}

Another approach to a proof of Theorem \ref{Vanish} is to use the pushforward of ${\mathcal L}^k$ from ${\frak M}$ along the Hitchin map to the Hitchin base. Over the open of the base, where the Hitchin map is a fibration, positivity of  ${\mathcal L}^k$ gives vanishing of all non-zero direct images. However, an argument is need to extend this to all fibers of the Hitchin map. Serre duality might actually do this along all fibers, however there could be problems due to very singular and/or non-reduced fibers. We hope to be able to complete this alternative argument at some point.

We expect that $ H_{G[[z]]}^{*}\left(S^{p}(\Lie{g}((z))/\Lie{g}[[z]]) \otimes {\mathbf H}_\lambda^{(k)} \right) $ vanished in positive degree can also be obtained by an explicit Laplace type argument \`a la Theorem E from \cite{FGT} directly based on a positivity result for the Laplacian in higher cohomological degrees. 

The arguments given in the upper part of page 13 in \cite{FT}, which again refer back to \cite{FGT} Theorem A, should provide us with a description of 
$$ H_{G[[z]]}^{0}\left(S^{p}(\Lie{g}((z))/\Lie{g}[[z]]) \otimes {\mathbf H}_\lambda^{(k)} \right) $$ 
generated over $ H_{G[[z]]}^{0}\left( {\mathbf H}_\lambda^{(k)} \right) = H^0(\frak{M},\CL^k\otimes V_\lambda)$ by generators in $p=m$, where $m$ runs through the exponents of $\Lie{g}$, of copies of $({\mathbb C}((z))/{\mathbb C}[[z]]) \otimes (\partial/\partial z)^{\otimes m}$. We expect that combining such generators with arguments along the line of the arguments presented in the proof of Theorem 4.2. of \cite{FT}, regarding the leading differentials action on the generators should allow for a complete description of the space of sections $H^0(\frak{M}, S^*T \otimes L^k)$.

In any case, by this vanishing of higher cohomology groups, we now get the Complex Verlinde formula in its $R$-twisted generalization for the stack.

\begin{thm}\label{thm12}
For $R,g\geq 2$ and we have the following graded dimension formula
\be
\dim_t H^0\left(\frak{M},S_{t}( N_{\mathfrak M}^{(R)*})\otimes \frak{L}^k\right) = \sum_{f\in F^{\rm reg}_{\rho,t} / W} \theta_{t, R}(f)^{1-g}.
\ee 
\end{thm}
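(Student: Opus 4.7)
The plan is to combine the two main results just established, Theorem \ref{thm:Main5} (the index formula) and Theorem \ref{Vanish} (the vanishing of higher cohomology), to conclude directly. The statement to prove is a graded refinement of the equality "index equals dimension of $H^0$", so the task reduces to checking that the vanishing is compatible with the grading by symmetric powers, and then reading off the answer from the already-computed index.

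The first step is to unfold the $K$-theoretic class $S_t(N_{\mathfrak M}^{(R)*}) = \sum_{n \geq 0} t^n\, S^n(N_{\mathfrak M}^{(R)*})$, so that
\begin{equation*}
\ind\bigl(\frak{M},\, S_{t}( N_{\mathfrak M}^{(R)*})\otimes \frak{L}^k\bigr) = \sum_{n \geq 0} t^n \ind\bigl(\frak{M},\, S^n( N_{\mathfrak M}^{(R)*})\otimes \frak{L}^k\bigr),
\end{equation*}
which is a purely formal manipulation of the graded Euler characteristic. Next, I would apply Theorem \ref{Vanish}, which asserts $H^i(\frak{M}, S^*(N_{\mathfrak M}^{(R)*})\otimes \frak{L}^k) = 0$ for all $i > 0$ when $R \geq 2$. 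Since $S^*(N_{\mathfrak M}^{(R)*})$ is the direct sum of the $S^n(N_{\mathfrak M}^{(R)*})$ as sheaves on $\frak{M}$, the vanishing holds for each graded piece individually, i.e.\ $H^i(\frak{M}, S^n( N_{\mathfrak M}^{(R)*})\otimes \frak{L}^k) = 0$ for every $n \geq 0$ and $i > 0$. Hence each index in the sum above reduces to $\dim H^0(\frak{M}, S^n( N_{\mathfrak M}^{(R)*})\otimes \frak{L}^k)$.

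Putting these two steps together gives
\begin{equation*}
\sum_{n \geq 0} t^n \dim H^0\bigl(\frak{M},\, S^n( N_{\mathfrak M}^{(R)*})\otimes \frak{L}^k\bigr) = \ind\bigl(\frak{M},\, S_{t}( N_{\mathfrak M}^{(R)*})\otimes \frak{L}^k\bigr),
\end{equation*}
and the left-hand side is by definition $\dim_t H^0(\frak{M}, S^*( N_{\mathfrak M}^{(R)*})\otimes \frak{L}^k)$. Finally, Theorem \ref{thm:Main5} identifies the right-hand side with $\sum_{f \in F_{\rho,t}^{\rm reg}/W} \theta_{t,R}(f)^{1-g}$, completing the proof.

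There is essentially no obstacle once Theorems \ref{thm:Main5} and \ref{Vanish} are in hand; the only point worth verifying carefully is that the two expressions are to be compared term-by-term in $t$, which is legitimate because the $t$-grading on both sides comes from the natural $\mathbb{C}^*$-action on the fibres of $\frak{M}_H^{(R)} \to \frak{M}$ (equivalently, the symmetric algebra grading on $S^*(N_{\mathfrak M}^{(R)*})$), and both $\ind$ and $\dim H^0$ respect this decomposition.
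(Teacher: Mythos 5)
Your proposal is correct and is essentially the paper's own argument: the paper deduces Theorem \ref{thm12} in one line by combining the index formula of Theorem \ref{thm:Main5} with the vanishing of Theorem \ref{Vanish}, exactly as you do. Your additional care in checking that the vanishing and the index both respect the grading by symmetric powers (so the comparison is term-by-term in $t$) is a correct and worthwhile elaboration of what the paper leaves implicit.
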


Since for $R,g\geq 2$, the stack $(\frak{M}_H^{(R)})^{\text{ss}}$ of semi-stable $G$-Higgs bundles has codimension more than two in $\frak{M}^{(R)}_H$, 
we observe that
$$ H^0(\frak{M}^{(R)}_H, \frak{L}^k) = H^0((\frak{M}^{(R)}_H)^{\text{ss}}, \frak{L}^k) = H^0(M^{(R)}_H, L^k).$$
and we get the following Verlinde formula for Higgs bundles in the R-twisted case.
\begin{thm}\label{thm13}
For $R,g\geq 2$ we have the following graded dimension formula
\be
\dim_t H^0\left(M^{(R)}_H,L^k\right) = \sum_{f\in F^{\rm reg}_{\rho,t} / W} \theta_{t, R}(f)^{1-g}.
\ee 
\end{thm}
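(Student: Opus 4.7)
The plan is to deduce Theorem \ref{thm13} from Theorem \ref{thm12} by comparing sections over the stack $\frak{M}_H^{(R)}$ with sections over the coarse moduli space $M_H^{(R)}$. First, I would invoke the Proposition identifying $H^0(\frak{M}_H^{(R)}, \frak{L}^k)$ with $H^0(\frak{M}, S^*(N_{\frak{M}}^{(R)*})\otimes \frak{L}^k)$ via the presentation \eqref{SS}. The $\mathbb{C}^*$-action that rescales the Higgs field corresponds on the right-hand side to the natural grading of the symmetric algebra $S^*(N_{\frak{M}}^{(R)*})=\bigoplus_n S^n(N_{\frak{M}}^{(R)*})$, so the Hitchin character of $H^0(\frak{M}_H^{(R)}, \frak{L}^k)$ equals $\dim_t H^0(\frak{M}, S_t(N_{\frak{M}}^{(R)*})\otimes \frak{L}^k)$. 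By Theorem \ref{thm12} this is exactly the Verlinde sum on the right-hand side of the claim.

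The second step is to pass from the stack to the coarse moduli space. The key claim is that for $R\geq 2$ and $g\geq 2$, the unstable locus in $\frak{M}_H^{(R)}$ has codimension strictly greater than two (as already quoted in the paragraph preceding the theorem). Granting this, restriction to the semi-stable substack $(\frak{M}_H^{(R)})^{\text{ss}}$ induces an isomorphism on $H^0$ by an algebraic Hartogs-type extension for sections of a line bundle on a smooth Artin stack. One then uses that the determinant line bundle descends along the good moduli morphism $(\frak{M}_H^{(R)})^{\text{ss}}\to M_H^{(R)}$ and that pushforward is exact on invariant sections, giving $H^0((\frak{M}_H^{(R)})^{\text{ss}},\frak{L}^k)=H^0(M_H^{(R)},L^k)$ as $\mathbb{C}^*$-graded vector spaces. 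Composing with the first step yields the graded dimension formula.

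The main obstacle is verifying the codimension estimate for the unstable locus in $\frak{M}_H^{(R)}$. For $R=2$ this follows by extending the Atiyah-Bott/Shatz stratification of $\frak{M}$ to its Higgs version, since the Higgs fields that one may add to an unstable bundle of a given Harder-Narasimhan type are constrained to lie in the corresponding parabolic subspace of $\mathrm{ad}(E)\otimes K_\Sigma$; combined with the known growth of the instability codimension in $\frak{M}$ with $g$, this produces codimension strictly larger than two. For $R>2$ the estimate can only improve, because the relative dimension of $\frak{M}_H^{(R)}\to \frak{M}$ grows with $R$ while the instability condition remains of the same pointwise nature. Once the codimension claim is secured, the remainder of the proof is entirely formal and relies solely on Theorems \ref{thm:Main5}, \ref{Vanish} and \ref{thm12} already established above.
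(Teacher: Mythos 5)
Your proposal follows essentially the same route as the paper: deduce the stack-level graded dimension formula from Theorem \ref{thm12} (itself a combination of the index formula of Theorem \ref{thm:Main5} and the vanishing Theorem \ref{Vanish}), then pass to the moduli space via the codimension-greater-than-two estimate for the unstable locus, which gives $H^0(\frak{M}^{(R)}_H,\frak{L}^k)=H^0((\frak{M}^{(R)}_H)^{\text{ss}},\frak{L}^k)=H^0(M^{(R)}_H,L^k)$. The paper states this last chain of identifications without elaboration, so your added detail on the Shatz stratification and the descent along the good moduli morphism only fleshes out the same argument rather than departing from it.
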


The complex Verlinde formula, {\it e.g.}~Theorem \ref{thm:Main1}, stated in the introduction, now follows by taking $R=2$ in Theorem \ref{thm13}.

For the case of $R\leq 0$, we will in general not have the vanishing of non-zero cohomology groups, neither would we be able to directly relate quantization of the stack to that of the space, as $\frak{M}_H$ is a derived stack for $g>1$. However, we can offer the following alternative interpretation of our index formula in this case, simply by noting that
$$N_{\frak{M}}^{(R)*} = - N_{\frak{M}}^{(2-R)},$$
and that the non-zero degree cohomology of $\lambda_{-t}( N_{\mathfrak M}^{(R)})\otimes \frak{L}^k$ vanishes, thus we conclude
\begin{thm}
For $R\leq 0$ and $g\geq 2$ we get that
\be
\dim H^0\left(\frak{M},\lambda_{-t}( N_{\mathfrak M}^{(R)})\otimes \frak{L}^k\right) = \sum_{f\in F^{\rm reg}_{\rho,t} / W} \theta_{t, 2-R}(f)^{1-g}.
\ee 
\end{thm}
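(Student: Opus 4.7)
The plan is to reduce the $R\le 0$ case to the already-established $R\ge 2$ case via a K-theoretic identity, after which the conclusion is an immediate application of Theorems \ref{thm:Main5} and \ref{Vanish}. First I would establish the K-theoretic identity $N^{(R)*}_{\frak{M}} = -N^{(2-R)}_{\frak{M}}$ (equivalently, $N^{(R)}_{\frak{M}} = -N^{(2-R)*}_{\frak{M}}$) in $K(\frak{M})$, which is essentially a consequence of relative Serre duality on the universal curve: writing $N^{(R)*}_{\frak{M}} = \Omega_R[1]$ with $\Omega_R = R(\pi_{\frak{M}})_*(E(\frak{g})\otimes K^{1-R/2}_{\Sigma})$, Serre duality gives $\Omega_R^{\vee} \cong \Omega_{2-R}[1]$ (using self-duality of $\frak{g}$ under the Killing form), and shifting by $[1]$ yields the desired identity.

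Next I would exploit the universal K-theoretic identity $\lambda_{-t}(V)\cdot S_t(V) = 1$ in $K(\frak{M})[[t]]$, which, extended to virtual bundles, gives $\lambda_{-t}(-V) = S_t(V)$. Applied to $V = N^{(2-R)*}_{\frak{M}}$ this yields
\[
\lambda_{-t}\bigl(N^{(R)}_{\frak{M}}\bigr) \;=\; \lambda_{-t}\bigl(-N^{(2-R)*}_{\frak{M}}\bigr) \;=\; S_t\bigl(N^{(2-R)*}_{\frak{M}}\bigr)
\]
as an element of $K(\frak{M})[[t]]$. Note that although $\lambda_{-t}(N^{(R)})$ is a priori an alternating sum, the right-hand side is a genuine non-negatively graded class; this is precisely how I would interpret the $\dim H^0$ appearing in the statement.

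Since $R\le 0$ implies $2-R\ge 2$, Theorem \ref{Vanish} applies with $R$ replaced by $2-R$, giving $H^i(\frak{M}, S^p(N^{(2-R)*})\otimes \frak{L}^k) = 0$ for all $i>0$ and all $p\ge 0$. Summing against $t^p$ and passing to the $t$-graded object yields $H^i(\frak{M}, S_t(N^{(2-R)*})\otimes \frak{L}^k) = 0$ for $i>0$, so the $H^0$ equals the index. Finally Theorem \ref{thm:Main5} with $R$ replaced by $2-R$ identifies the latter index with $\sum_{f\in F^{\rm reg}_{\rho,t}/W} \theta_{t,\,2-R}(f)^{1-g}$, completing the proof.

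The only real subtlety, and the place I would be most careful, is the interpretation of ``$\dim H^0$ of $\lambda_{-t}(N^{(R)})\otimes\frak{L}^k$''; the point is that this expression is only meaningful \emph{after} the K-theoretic rewriting as $S_t(N^{(2-R)*})\otimes\frak{L}^k$, since $N^{(R)}$ itself has no reason to satisfy a vanishing statement term-by-term in its exterior powers. Once this interpretation is fixed, no further analytic work is required: the entire theorem is a formal consequence of Theorems \ref{thm:Main5} and \ref{Vanish}, together with the Serre-duality identity $N^{(R)*}_{\frak{M}} = -N^{(2-R)}_{\frak{M}}$ and the $\lambda$-$S$ inversion relation.
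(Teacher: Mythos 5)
Your proposal is correct and follows essentially the same route as the paper: the paper's own argument consists precisely of the K-theoretic identity $N_{\frak{M}}^{(R)*} = -N_{\frak{M}}^{(2-R)}$ together with the resulting identification $\lambda_{-t}(N_{\frak{M}}^{(R)}) = S_t(N_{\frak{M}}^{(2-R)*})$, after which Theorems \ref{Vanish} and \ref{thm:Main5} (applied with $2-R\geq 2$ in place of $R$) give the vanishing of higher cohomology and the value of the index. Your additional details --- deriving the identity from relative Serre duality and self-duality of $\frak{g}$, and spelling out the $\lambda_{-t}(-V)=S_t(V)$ inversion --- merely make explicit what the paper leaves implicit, and your remark on the correct interpretation of $\dim H^0$ is exactly the right caveat.
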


For $R=0$ we recover Theorem 6.4 in \cite{TW}, where $U$ is set to be the trivial representation.

\section{Proof of the Verlinde formula for parabolic Higgs bundles}\label{ParaProof}
We recall that a parabolic Higgs bundle is a pair $(B,\Phi)$ where $B$ is a parabolic bundle with reduction of the structure group of $B$ to a parabolic subgroup $P_i$ over each parabolic point $\{x_1,\ldots,x_n\}$  on $\Sigma$. Fix the divisor $D = \sum_i x_i$. We assign integral dominant weights $\vec \lambda = (\lambda_1, \ldots, \lambda_n)$, one for each point, such that $P_i$ preserves $\lambda_i$
. As for the Higgs field, we require that 
$$\Phi \in H^0(\Sigma, \text{ad}_s(B)\otimes K^{R/2}(D)),$$
in the $R$-twisted case, which we will consider. Further, the notation $\text{ad}_s(B)$ refers to the adjoint bundle of $B$ with the reduction of the structure group corresponding to $P_i$ over $x_i$.  We let $P$ be the corresponding moduli space of semi-stable parabolic $G$-bundles and $P_H$ the moduli space of such $G$-Higgs bundles. The corresponding stacks we denote ${\mathfrak P}$ and ${\mathfrak P}^{(R)}_H$ respectively. By slight abuse of notation, we also use $B$ to denote the universal bundle over $\Sigma\times \frak{P}$, and $B(\frak{g})$ the associated adjoint bundle. Let $N^{(R)}_\frak{P}$ denote the normal bundle to $\frak{P}$ in $\frak{P}^{(R)}_H$.

As discussed in the introduction, each weight is required to be of level $k$ and we then get a level-$k$ determinant line bundle $L_k$ over the parabolic moduli space $P$ and ${\mathfrak L}_k$ over the stack  ${\mathfrak P}^{(R)}_H$.

Since 
\be\label{ParaSym}
\frak{P}^{(R)}_H=\mathrm{Spec}\,\mathrm{Sym}(N^{(R)*}_{\frak{P}}), 
\ee
we can use the same techniques as in the non-parabolic case to prove the following proposition.
\begin{prop}
We have the following identification of cohomology groups
$$ H^i({\mathfrak P}^{(R)}_H, \frak{L}_k) = H^i({\mathfrak P}, S^*(N^{(R)*}_{\mathfrak P})\otimes \frak{L}_k)$$
for all $i$.

\end{prop}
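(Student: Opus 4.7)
The plan is to mirror, almost verbatim, the proof of the analogous non-parabolic proposition, substituting (\ref{ParaSym}) in place of (\ref{SS}). Concretely, I would consider the natural projection $\pi : \frak{P}^{(R)}_H \to \frak{P}$. By (\ref{ParaSym}), $\pi$ is an affine morphism, realized as $\mathrm{Spec}\,\mathrm{Sym}(N^{(R)*}_{\frak{P}})$; in particular $R^q\pi_*$ vanishes on quasi-coherent sheaves for all $q>0$, and the projection formula gives
$$\pi_*\frak{L}_k \;=\; S^*(N^{(R)*}_{\frak{P}})\otimes \frak{L}_k,$$
since the level-$k$ parabolic determinant line bundle $\frak{L}_k$ on $\frak{P}^{(R)}_H$ is built from the underlying parabolic $G$-bundle data alone and is therefore the $\pi$-pullback of $\frak{L}_k$ on $\frak{P}$.

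With this identification of the direct image in hand, I would apply the Leray spectral sequence
$$E_2^{p,q} \;=\; H^p\bigl(\frak{P},\, R^q\pi_*\frak{L}_k\bigr) \;\Longrightarrow\; H^{p+q}\bigl(\frak{P}^{(R)}_H,\, \frak{L}_k\bigr).$$
The vanishing of the higher direct images degenerates this spectral sequence trivially at $E_2$, leaving only the $q=0$ row, which produces the desired identification
$$H^i\bigl(\frak{P}^{(R)}_H,\frak{L}_k\bigr) \;=\; H^i\bigl(\frak{P},\, S^*(N^{(R)*}_{\frak{P}})\otimes \frak{L}_k\bigr)$$
for every $i$.

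I do not foresee any substantial obstacle, as the argument is formal once (\ref{ParaSym}) is granted. The only mildly delicate point worth explicit verification is that $\frak{L}_k$ on $\frak{P}^{(R)}_H$ really is pulled back along $\pi$ from $\frak{L}_k$ on $\frak{P}$; this follows from its construction out of the parabolic weight data $\vec\lambda$ and the tautological $G$-bundle $B$, both of which are insensitive to the Higgs field $\Phi$. Everything else reduces to the standard behaviour of affine morphisms of stacks under pushforward.
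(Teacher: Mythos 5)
Your proposal is correct and follows essentially the same route as the paper, which likewise invokes the presentation $\frak{P}^{(R)}_H=\mathrm{Spec}\,\mathrm{Sym}(N^{(R)*}_{\frak{P}})$ and the (Leray) spectral sequence for the affine projection to $\frak{P}$, with the fiberwise cohomology concentrated in degree zero. Your explicit verification that $\frak{L}_k$ is pulled back along $\pi$ is a detail the paper leaves implicit, but it is the same argument.
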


Using this in cohomological degree zero and ${\mathbb C}^*$-degree $n$, we in particular get that
$$ H_n^0(\frak{P}_H, \frak{L}_k) = H^0(\frak{P}, S^n(N^{(R)*}_{\frak{P}})\otimes \frak{L}_k).$$

 The stack ${\mathfrak P}$ can be viewed as the total space of a fibration over ${\mathfrak M}$
\be
G/P_1\times\ldots\times G/P_n\longrightarrow \frak{P} \stackrel{\pi}{\longrightarrow} \frak{M}.
\ee
The pullback of $L_k$ to $G/P_i$ becomes $L_{\lambda_i}$, whose holomorphic sections give the irreducible representation $V_{\lambda_i}$ associated to $\lambda_i$. 

We consider the diagram of inclusions of stacks for each point $p\in {\mathfrak M}$
$$
\begin{CD}
(\pi^{(R)} )^{-1}(p)    @>>>  {\mathfrak P}_H^{(R)} @>\pi^{(R)}>>  {\mathfrak M}^{(R)}_H\\
@VVV        @VVV  @VVV \\
 \pi^{-1}(p)  @>>>  {\mathfrak P} @>\pi>>  {\mathfrak M}.
\end{CD}
$$
Around the last square in this diagram,  we consider the the exact sequence of the tangent complexes and normals
\be \label{CDN}
\phantom{hhhhh}\begin{CD}
0 @>>>  T_{\pi^{(R)}}{\mathfrak P}_H^{(R)}    @>>>  T{\mathfrak P}_H^{(R)} @>d\pi_{H}>>  \pi^*T{\mathfrak M}^{(R)}_H @>>> 0\\
@. @VVV        @VVV  @VVV @.\\
0 @>>>  T_\pi {\mathfrak P}  @>>>  T{\mathfrak P} @>d\pi>>  \pi^*T{\mathfrak M} @>>>  0\\
@. @VVV        @VVV  @VVV @.\\
0 @>>>  T_{\pi^{(R)}}{\mathfrak P}_H /T_\pi {\mathfrak P} @>>>  N^{(R)}_{\mathfrak P} @>d\pi>>  \pi^*N^{(R)}_{\mathfrak M} @>>>  0.
\end{CD}
\ee

We have the following fibered product presentation
\be\label{prodP}
\frak{P} = E_{1}/P_i\times_\frak{M}  \ldots \times_\frak{M} E_n/P_n,
\ee
where $E_i$ is the pull back of the universal bundle $E$ under the inclusion map 
$$ \{x_i\}\times \frak{M} \subset \Sigma \times \frak{M}.$$
From this we immediately conclude that
\begin{lemma}\label{lemtan}
We have the following isomorphism of bundles over ${\mathfrak P}$
\be
T_{\pi^{(R)}}{\mathfrak P}_H /T_\pi {\mathfrak P} = \bigoplus_{i=1}^n p_i^*(E_{x_i} \times \Lie{g})/P_i
\ee
where $p_i$ is projection on the $i$'th factor in \eqref{prodP}.
\end{lemma}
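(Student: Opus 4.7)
The plan is to read off the quotient $T_{\pi^{(R)}}\frak{P}_H^{(R)}/T_\pi \frak{P}$ from the bottom row of the diagram \eqref{CDN}, which identifies this quotient with the kernel of the induced bundle map $N^{(R)}_\frak{P} \to \pi^* N^{(R)}_\frak{M}$ over $\frak{P}$. Fiberwise, $N^{(R)}_\frak{P}$ has fiber $H^0(\Sigma, \text{ad}_s(B) \otimes K^{R/2}(D))$ while $\pi^* N^{(R)}_\frak{M}$ has fiber $H^0(\Sigma, \text{ad}(E) \otimes K^{R/2})$; since the underlying sheaves on $\Sigma$ agree away from $D$, the discrepancy between them is supported entirely at the marked points, reducing the problem to a local computation at each $x_i$.

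To compute the local contribution at each $x_i$, I analyze the sheaf quotient $\text{ad}_s(B) \otimes K^{R/2}(D) / \text{ad}(E) \otimes K^{R/2}$. In a formal coordinate around $x_i$, the twist $K^{R/2}(D)/K^{R/2}$ is a one-dimensional skyscraper at $x_i$, and combining this with the fact that $\text{ad}_s(B)$ restricts at $x_i$ to a copy of $\Lie{g}$ equipped with the adjoint $P_i$-action coming from the parabolic reduction, the overall quotient is a skyscraper at $D$ whose stalk at $x_i$ is canonically $\Lie{g}$ with this $P_i$-action. Since the quotient sheaf is a skyscraper, its first cohomology vanishes, so the long exact sequence in cohomology identifies the fiberwise kernel of $N^{(R)}_\frak{P} \to \pi^* N^{(R)}_\frak{M}$ with $\bigoplus_i E_{x_i} \times_{P_i} \Lie{g}$, which is exactly $\bigoplus_i (E_{x_i} \times \Lie{g})/P_i$ as claimed.

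Finally, the fiberwise identification lifts to an isomorphism of bundles over $\frak{P}$ because every object involved --- the relevant sheaves on $\Sigma \times \frak{P}$, the universal parabolic reductions at each $x_i$, and the kernel of the normal-bundle map --- is natural with respect to the fibered product structure \eqref{prodP}. Pulling back along the projections $p_i: \frak{P} \to E_i/P_i$ to each factor yields the claimed direct sum decomposition. The main technical point is the correct identification of the $P_i$-action on the $\Lie{g}$-stalk at $x_i$; once this bookkeeping is carried out (using the parabolic reduction to trivialize the local structure of $\text{ad}_s(B) \otimes K^{R/2}(D)$ near $x_i$ and matching it against the residue computation giving the generator of $K^{R/2}(D)/K^{R/2}$), the claim follows from the sheaf-theoretic long exact sequence argument.
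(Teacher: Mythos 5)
Your proof is correct and takes essentially the same route the paper intends (the paper merely asserts the lemma as an immediate consequence of the fibered product presentation \eqref{prodP}): in both cases the content is that the discrepancy between $N^{(R)}_{\frak P}$ and $\pi^* N^{(R)}_{\frak M}$ is the pushforward of the skyscraper $\mathrm{ad}_s(B)\otimes K^{R/2}(D)\big/\mathrm{ad}(E)\otimes K^{R/2}$ supported on $D$, whose stalk at $x_i$ is a copy of $\Lie{g}$ carrying the adjoint $P_i$-action through the parabolic reduction, i.e.\ the associated bundle $p_i^*(E_{x_i}\times\Lie{g})/P_i$. The one caveat is that your ``kernel of $N^{(R)}_{\frak P}\to\pi^*N^{(R)}_{\frak M}$'' and the ensuing long exact sequence step should really be read as an identity of K-theory classes (the natural map is the inclusion $\pi^*N^{(R)}_{\frak M}\hookrightarrow N^{(R)}_{\frak P}$, and the connecting map out of $H^0$ of the skyscraper need not vanish fiberwise over unstable loci), which is exactly the level at which the paper itself works via $N^{(R)*}_{\frak P}=\Omega_{R,D}[1]$ and at which the lemma is subsequently used.
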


Similar to the non-parabolic case,  we get an identification
$${N_{\frak{P}}}^* = \Omega_{R,D}[1]$$
where
$$\Omega_{R,D}=R\pi_*\left(B(\frak{g})\otimes K^{1-R/2}(-D)\right).$$
This follows again from infinitesimal deformation considerations presented in \cite{BR} for parabolic bundles in Section 6 of that paper.

For $R\leq0$ the stack $\frak{P}$ is entirely derived, but similar to what we did in the non-parabolic case, we can also consider  the following K-theory class 
$$\Omega'_{R,D}=R\pi_*\left(B(\frak{g})\otimes K^{1-R/2}(D)\right). $$

We now compute the index of $S_t(N_\frak{P}^*)\otimes \frak{L}_k$ over $\frak{P}$ by first pushing forward along $\pi$ to $\frak{M}$ followed by an application of the Teleman-Woodward index formula over $\frak{M}$. For the classes that we are interested in, we see (details given in the proof of Theorem \ref{thm:MainP} below) from Lemma \ref{lemtan}, that we in particular need to evaluate the index of the classes $L_{\lambda_i}\otimes S_t (T(G/P_i))$ and $L_{\lambda_i}\otimes \lambda_{-t} (T^*(G/P_i))$ over the flag variety $G/P_i$. These indices, which we will call the ``deformed characters'', can be computed using a fix-point formula as we will now argue.

\begin{definition}
For a weight $\lambda$ left invariant under a parabolic subgroup $P'$,
we define the deformed characters for the pair $(\lambda, P')$\footnote{To avoid clutter, $t$ in the subscript is often omitted. $P'$ will be suppressed when it is the standard Borel subgroup. }
\be\label{DCF}
\Theta_{\lambda,P',t} = \sum_{w\in W}\frac{e^{w(\lambda)}}{\prod_{\alpha>0}\left(1-e^{-w(\alpha)}\right)\prod_{\alpha\in \frak{g}/\frak{p'}}\left(1-te^{w(\alpha)}\right)}
\ee
and
\be\label{DefChar'}
\Theta'_{\lambda,P',t}=\sum_{w\in W}\frac{e^{w(\lambda)}\prod_{\alpha\in \frak{g}/\frak{p'}}\left(1-te^{-w(\alpha)}\right)}{\prod_{\alpha>0}\left(1-e^{-w(\alpha)}\right)}.
\ee
\end{definition}

For $t=0$, $\Theta_{\lambda,P'}$ and $\Theta'_{\lambda,P'}$ have no dependence on $G'$ and indeed reproduce the character of the representation $R_\lambda$ associated with $\lambda$. When $P'$ is the Borel subgroup of $G$, the $\Theta'_{\lambda}$'s are, modulo a normalization factor, the Hall-Littlewood polynomials.

\begin{lemma}\label{IndT}
The deform characters gives the following $T$-equivariant indices 
\be
\label{eqind+}
\mathrm{Ind}_{T}(G/P',L_\lambda^k\otimes S_t T(G/P'))= \Theta_{\lambda,P'}
\ee
and
\be
\label{eqind-}
\mathrm{Ind}_{T}(G/P',L_\lambda^k\otimes \lambda_{-t} T^*(G/P'))= \Theta'_{\lambda,P'}.
\ee
\end{lemma}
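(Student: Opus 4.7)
The proof applies the Atiyah--Bott--Lefschetz equivariant localization formula on the partial flag variety $G/P'$, followed by a Weyl averaging identity over the Levi Weyl subgroup $W_{P'}\subset W$ to convert the natural sum over $W/W_{P'}$ into the sum over $W$ appearing in \eqref{DCF} and \eqref{DefChar'}.

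First, we record the local data at each $T$-fixed point. The $T$-fixed points of $G/P'$ are the cosets $wP'$ with $w$ ranging over $W/W_{P'}$, and the tangent space at $wP'$ carries the $T$-weights $\{w\alpha \mid \alpha\in\mathfrak{R}(\mathfrak{g}/\mathfrak{p}')\}$. Since $\lambda$ is fixed by $P'$, the fiber of $L_\lambda^k$ at $wP'$ is the one-dimensional $T$-module of weight $w\lambda$. Consequently,
$$\mathrm{ch}_T\bigl(S_t T(G/P')\bigr)_{wP'} = \prod_{\alpha \in \mathfrak{R}(\mathfrak{g}/\mathfrak{p}')}\frac{1}{1-te^{w\alpha}}, \qquad \mathrm{ch}_T\bigl(\lambda_{-t} T^*(G/P')\bigr)_{wP'} = \prod_{\alpha \in \mathfrak{R}(\mathfrak{g}/\mathfrak{p}')}(1-te^{-w\alpha}).$$

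Second, the Atiyah--Bott--Lefschetz formula in equivariant K-theory gives, for any $T$-equivariant coherent sheaf $\mathcal{F}$ on $G/P'$,
$$\mathrm{Ind}_T\bigl(G/P',\, L_\lambda^k\otimes\mathcal{F}\bigr) = \sum_{w\in W/W_{P'}}\frac{e^{w\lambda}\cdot\mathrm{ch}_T(\mathcal{F})_{wP'}}{\prod_{\alpha\in\mathfrak{R}(\mathfrak{g}/\mathfrak{p}')}(1-e^{-w\alpha})}.$$
Substituting the characters of Step~1 yields two expressions indexed by $W/W_{P'}$ which must match \eqref{eqind+} and \eqref{eqind-} once rewritten as sums over all of $W$.

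Third, the passage from sums over $W/W_{P'}$ to sums over $W$ rests on three observations: (i) $W_{P'}$ fixes $\lambda$ pointwise; (ii) $W_{P'}$, being generated by reflections in simple Levi roots, permutes the set $\mathfrak{R}(\mathfrak{g}/\mathfrak{p}')$ of positive non-Levi roots (a reflection $s_\beta$ in a Levi simple root $\beta$ sends any non-Levi positive root $\alpha$ to $\alpha - \langle\alpha,\beta^\vee\rangle\beta$, which is again non-Levi and positive), so the $t$-dependent factors are $W_{P'}$-invariant; (iii) splitting the positive roots into positive Levi roots and $\mathfrak{R}(\mathfrak{g}/\mathfrak{p}')$ and applying the Weyl denominator identity to the Levi yields
$$\sum_{u\in W_{P'}}\frac{1}{\prod_{\beta>0,\,\beta\in\mathrm{Levi}}(1-e^{-u\beta})} = 1,$$
which follows from the Weyl character formula applied to the trivial representation of the Levi. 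Combining (i)--(iii) transforms the Atiyah--Bott sum over $W/W_{P'}$ into precisely the $W$-sum in \eqref{DCF} (respectively \eqref{DefChar'}), establishing \eqref{eqind+} (respectively \eqref{eqind-}). The substantive input is the Weyl averaging identity of Step~3; the remainder is standard equivariant localization.
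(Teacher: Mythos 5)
Your proof is correct, and it takes a genuinely different (though closely related) route from the paper's. The paper first proves the $P'=B$ case by Atiyah--Bott localization on the full flag variety $G/B$, where the fixed points are indexed by all of $W$; this directly produces the $W$-sum and the denominator $\prod_{\alpha>0}(1-e^{-w(\alpha)})$ appearing in \eqref{DCF}. It then handles general $P'$ not by localizing on $G/P'$ but by observing that $L_\lambda\otimes S^*T_{G/P'}$ is the pushforward along $G/B\to G/P'$ of $L_\lambda\otimes S^*\bigl((\mathfrak{g}/\mathfrak{p}')\times_G G/B\bigr)$, so the same $G/B$-localization applies with the $t$-dependent factors restricted to $\mathfrak{R}(\mathfrak{g}/\mathfrak{p}')$. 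You instead localize directly on $G/P'$, obtaining a sum over $W/W_{P'}$ with denominator $\prod_{\alpha\in\mathfrak{R}(\mathfrak{g}/\mathfrak{p}')}(1-e^{-w\alpha})$, and then restore the full $W$-sum by averaging over $W_{P'}$. The two arguments are essentially dual to one another: your Weyl averaging identity $\sum_{u\in W_{P'}}\prod_{\beta}(1-e^{-u\beta})^{-1}=1$ (product over positive Levi roots) is exactly the statement that the $T$-equivariant index of the structure sheaf of the fiber $P'/B$ equals $1$, which is what makes the paper's pushforward step cost nothing. Your version makes the Weyl-group combinatorics explicit where the paper's makes the fibration geometry explicit; your supporting observations (i)--(iii) -- that $W_{P'}$ fixes $\lambda$, permutes $\mathfrak{R}(\mathfrak{g}/\mathfrak{p}')$, and satisfies the denominator identity -- are all correct and suffice to carry out the regrouping $w=vu$ with $v\in W/W_{P'}$, $u\in W_{P'}$.
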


\begin{proof}
We first consider the case with $P'=B$, for which the lemma was already proved in \cite{G}. In this case, the left action of $T$ on $G/B$ has fixed points labeled by $w\in W$, since by the very definition, the Weyl group is $W= N(T)/T$. Now  the action of a $\xi\in T$ on $T_w(G/B)$ is specified by $e^{w(\alpha)}(\xi)$, where $\alpha$ runs through the positive roots $\frak{R}_+$, and hence we get that
$$ \tr\left(\xi : (L_\lambda)_w\otimes S_t(T_w(G/B)) \ra (L_\lambda)_w \otimes S_t(T_w(G/B))\right) = \frac{e^{w(\lambda)}}{\prod_{\alpha\in \frak{R}_+}\left(1-te^{w(\alpha)}\right)}(\xi).$$
and further by dividing by the determinant of one minus the cotangential action at $w$, and summing over $W$, we get exactly formula \eqref{eqind+}.
For the case where we consider $\lambda_{-t}T^*(G/B)$, we get from the trace
\begin{eqnarray} 
\lefteqn{\tr\left(\xi : (L_\lambda)_w\otimes \lambda_{-t}(T_w^*(G/B))  \ra (L_\lambda)_w \otimes \lambda_{-t}(T_w^*(G/B))\right) } \phantom{xxxxxxxxxxxxxxxxxxxxx}\nonumber\\
&=& e^{w(\lambda)}\prod_{\alpha\in \frak{R}_+}\left(1-te^{-w(\alpha)}\right)(\xi).\nonumber
\end{eqnarray}
Further, for the case of $B \subset P' \subset G$, we can consider the projection from $G/B$ to $G/P'$ to get the desired result. Namely, $L_\lambda\otimes S^*T_{G/P'}$ is the pushforward of the sheave $L_\lambda\otimes S^*\left((\frak{g}/\frak{p}')\times_G G/B\right)$, whose $T$-equivariant index can be computed in exactly the same way but with the replacement of $\prod_{\alpha>0}$ by $\prod_{\alpha\in \frak{g}/\frak{p}'}$. Applying a similar trick for the sheave $L_\lambda\otimes \lambda_{-t}T_{G/P'}^*$ completes the proof of the lemma.

\end{proof}

\begin{thm}\label{thm:MainP} The index over $\frak{P}$ of $\CL^k\otimes \mathrm{S}_t (N^{(R)*}_{\frak{P}})$ is given by
$$ 
\mathrm{Ind}\left(\frak{P},\CL^k\otimes \mathrm{S}_t (N^{(R)*}_{\frak{P}})\right)=\sum_{f\in F^{\rm reg}_{\rho,t} / W} \theta_t(f)^{1-g} \prod_i\Theta_{\lambda_i,P_i}(f),
$$
where $\Theta_{\lambda}$ is given by \eqref{DCF}, which also equals
$$
\Theta_{\lambda,P'} = \sum_{w\in W} \frac{(-1)^{l(w)} e^{w(\lambda+\rho)}}{\Delta \prod_{\alpha\in \frak{g}/\frak{p}'}\left(1-te^{w(\alpha)}\right)}.
$$
\end{thm}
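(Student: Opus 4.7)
The plan is to push the computation along the flag-variety fibration $\pi : \frak{P} \to \frak{M}$ and then invoke Teleman--Woodward (Theorem \ref{TWInd}) on $\frak{M}$, decorated by the character contributions from the parabolic fibers. First I would decompose the K-theory class: dualizing the short exact sequence in diagram \eqref{CDN} gives
\[
N^{(R)*}_{\frak{P}} = \pi^* N^{(R)*}_{\frak{M}} + \bigl(T_{\pi^{(R)}}\frak{P}_H/T_\pi \frak{P}\bigr)^*
\]
in the K-theory of $\frak{P}$, so $S_t(N^{(R)*}_{\frak{P}}) = \pi^* S_t(N^{(R)*}_{\frak{M}}) \otimes S_t\bigl((T_{\pi^{(R)}}\frak{P}_H/T_\pi\frak{P})^*\bigr)$. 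The line bundle $\CL^k$ on $\frak{P}$ decomposes as $\pi^*\CL^k$ twisted by the line bundles $L_{\lambda_i}^k$ coming through the $i$-th factor in \eqref{prodP}. By Lemma \ref{lemtan}, the second $S_t$ factor splits as a tensor product over the marked points, so the pushforward $\pi_*$ factors as a product over $i$ of $T$-equivariant pushforwards along the separate flag-variety fibrations with fibers $G/P_i$.

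Next, I would evaluate the fiberwise contributions. On each fiber $G/P_i$, after the $R$-twist is absorbed, the local class takes the form $L_{\lambda_i}^k \otimes S_t T(G/P_i)$, whose $T$-equivariant index is given by formula \eqref{eqind+} of Lemma \ref{IndT} to be the deformed character $\Theta_{\lambda_i, P_i, t}$. Combining these pieces, the pushforward becomes, as a K-theory class on $\frak{M}$,
\[
\pi_*\bigl(\CL^k \otimes S_t(N^{(R)*}_{\frak{P}})\bigr) = \CL^k \otimes S_t\bigl(N^{(R)*}_{\frak{M}}\bigr) \otimes \prod_i \Theta_{\lambda_i, P_i, t}(E_{x_i}),
\]
where $\Theta_{\lambda_i, P_i, t}(E_{x_i})$ denotes the virtual class on $\frak{M}$ obtained from the character by substituting the universal $G$-bundle at $x_i$, i.e.\ a $t$-dependent polynomial in the Atiyah--Bott generators $E_{x_i} V$.

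I would then apply the Teleman--Woodward formula to this class exactly as in the proof of Theorem \ref{thm:Main5}. The factor $S_t(N^{(R)*}_{\frak{M}})$ produces the sum over $f \in F^{\rm reg}_{\rho,t}/W$ weighted by $\theta_t(f)^{1-g}$, while each parabolic insertion $\Theta_{\lambda_i, P_i, t}(E_{x_i})$ evaluates at $f$ to the scalar $\Theta_{\lambda_i, P_i, t}(f)$, yielding the first displayed form of the theorem. The alternative presentation then follows from the Weyl denominator identity: tracking the sign changes of $w$ on $\prod_{\alpha>0}(1-e^{-\alpha})$ gives $\prod_{\alpha>0}(1-e^{-w\alpha}) = (-1)^{l(w)} e^{\rho-w\rho}\prod_{\alpha>0}(1-e^{-\alpha})$, and substituting this into \eqref{DCF} converts $e^{w\lambda}/\prod_{\alpha>0}(1-e^{-w\alpha})$ into $(-1)^{l(w)} e^{w(\lambda+\rho)}/\Delta$, leaving the $t$-dependent product over $\frak{g}/\frak{p}'$ unaffected.

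The main obstacle will be the fiberwise identification underlying Lemma \ref{lemtan}: one must verify that, after dualizing, the restriction of $(T_{\pi^{(R)}}\frak{P}_H/T_\pi\frak{P})^*$ to each fiber $G/P_i$ can be identified with its tangent bundle (possibly twisted by a line accounting for the $K^{R/2}(D)$-twist in the parabolic Higgs definition), so that Lemma \ref{IndT} applies in the form indicated, and similarly that the pullback of $\CL^k$ restricts to $L_{\lambda_i}^k$ with the correct normalization built into \eqref{DCF}. Once this bookkeeping is pinned down, the rest is a clean combination of the flag-variety fixed-point index with the Teleman--Woodward formula on $\frak{M}$.
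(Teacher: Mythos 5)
Your proposal follows essentially the same route as the paper's proof: split $N^{(R)*}_{\frak{P}}$ via the exact sequence \eqref{CDN}, factor $\CL^k$ over the fibered product \eqref{prodP}, push forward along $\pi$ using Lemma \ref{lemtan} together with the $T$-equivariant flag-variety index of Lemma \ref{IndT}, and then apply the Teleman--Woodward formula on $\frak{M}$ as in Theorem \ref{thm:Main5}. The argument is correct, and your closing Weyl-denominator manipulation supplies the equivalence of the two forms of $\Theta_{\lambda,P'}$ that the paper asserts without detail.
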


\begin{proof}
We consider the fibration
$$\pi : \frak{P} \ra \frak{M},$$
and consider the presentation \eqref{prodP}. By \eqref{CDN} we get the following equation in K-theory on $\frak{P}$
$$ N^{(R)}_{\mathfrak P}  = T_{\pi^{(R)}}{\mathfrak P}_H /T_\pi {\mathfrak P} +  \pi^*N^{(R)}_{\mathfrak M}. $$
So
$$ S_t(N^{(R)}_{\mathfrak P}) = S_t(T_{\pi^{(R)}}{\mathfrak P}_H /T_\pi {\mathfrak P})\otimes S_t\left( \pi^*N^{(R)}_{\mathfrak M}\right)$$
and we further get the following expression for $L_k$
$$L_k = \pi^*(L^k) \otimes \bigotimes_{i} p_i^*L^P_{\lambda_i},$$
where $L^P_{\lambda_i}$ is the line bundle over $E_i/P_i$ associated to the character $\lambda_i$ of the bundle $E_i$.
By the push-pull formula we get that
$$\pi_*(S_t(N^{(R)}_{\mathfrak P}) \otimes L_k) = \pi_*\left( S_t(T_{\pi^{(R)}}{\mathfrak P}_H /T_\pi {\mathfrak P}) \otimes  \bigotimes_{i} p_i^*L^P_{\lambda_i}\right) \otimes S_t\left(N^{(R)}_{\mathfrak M}\right)\otimes L^k.$$
By combining Lemma \ref{IndT} with Lemma \ref{lemtan} we conclude the following formula in  K-theory of $\frak{M}$
$$  \pi_*\left( S_t(T_{\pi^{(R)}}{\mathfrak P}_H /T_\pi {\mathfrak P}) \otimes  \bigotimes_{i} p_i^*L^P_{\lambda_i}\right) = \bigoplus_{i=1}^n E_{x_i} \Theta_{\lambda_i,P_i}.$$
But now the theorem follows from using the expression for $S_t(N^{(R)}_{\mathfrak M})\otimes L_k$ in K-theory of $\frak{M}$ from Section \ref{Proof} and the Teleman-Woodward index theorem.
\end{proof}

\begin{thm}
For $R\geq 2$ and all positive integers $i$ and $k$ we have that
$$ H^i({\mathfrak P}, S^*(N^{(R)*}_{\frak{P}})\otimes \frak{L}_k) = 0.$$
\end{thm}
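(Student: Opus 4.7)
The plan is to reduce this parabolic vanishing to the non-parabolic Theorem \ref{Vanish} via the projection $\pi\colon \frak{P}\to\frak{M}$, whose fiber is the product of partial flag varieties $\prod_i E_{x_i}/P_i$. Using the K-theoretic short exact sequence $0\to T_{\pi^{(R)}}\frak{P}_H/T_\pi\frak{P}\to N^{(R)}_{\frak{P}}\to \pi^*N^{(R)}_{\frak{M}}\to 0$ extracted from the bottom row of \eqref{CDN}, together with the factorization $\frak{L}_k=\pi^*\frak{L}^k\otimes\bigotimes_i p_i^*L^P_{\lambda_i}$ and the projection formula, the derived pushforward rewrites as
\begin{equation*}
R\pi_*\bigl(S^*(N^{(R)*}_{\frak{P}})\otimes \frak{L}_k\bigr)=R\pi_*\Bigl(S^*\bigl((T_{\pi^{(R)}}\frak{P}_H/T_\pi\frak{P})^*\bigr)\otimes\bigotimes_i p_i^*L^P_{\lambda_i}\Bigr)\otimes S^*(N^{(R)*}_{\frak{M}})\otimes \frak{L}^k,
\end{equation*}
mirroring the manipulation already carried out in the proof of Theorem \ref{thm:MainP}.

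The second step will be a Borel--Weil--Bott-type vanishing on each flag factor: the claim is that the higher direct images of the fiberwise sheaf $S^*\bigl((G\times_{P_i}\frak{g})^*\bigr)\otimes L^P_{\lambda_i}$ vanish, so that $R\pi_*$ of the combined sheaf sits in cohomological degree zero. Lemma \ref{IndT} already identifies the $T$-equivariant Euler characteristic with the deformed character $\Theta_{\lambda_i,P_i,t}$, and what remains is to upgrade this Lefschetz computation to a genuine degree-zero concentration by ruling out, after a $\rho$-shift, any Weyl reflection contributing to positive cohomological degree. Granting this, Leray collapses and yields
\begin{equation*}
H^i\bigl(\frak{P},S^*(N^{(R)*}_{\frak{P}})\otimes \frak{L}_k\bigr)=H^i\bigl(\frak{M},\,\pi_*(\cdots)\otimes S^*(N^{(R)*}_{\frak{M}})\otimes \frak{L}^k\bigr),
\end{equation*}
where $\pi_*(\cdots)$ decomposes as a direct sum of Atiyah--Bott classes $E_{x_i}V$ with $V$ ranging over the $G$-representations read off from the fiber computation.

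The proof will then conclude by an Atiyah--Bott-twisted enhancement of Theorem \ref{Vanish}. The spectral-sequence argument of Theorem \ref{Vanish} passes through the thick flag $\bX$, the two-step resolution \eqref{ess}, and the cohomological vanishing \eqref{E1}--\eqref{Valg}; from the loop-group viewpoint the insertion of the local factors $E_{x_i}V$ contributes only a finite-dimensional multiplicity space at $x_i$, leaving intact both the Fishel--Grojnowski--Teleman vanishing \eqref{Valg} and the Teleman group-cohomology vanishing of Proposition \ref{prop1}. The main obstacle is the Borel--Weil--Bott step on the fibers: the level constraint $\langle\lambda_i,\vartheta\rangle\le k$ only guarantees dominance of $\lambda_i$, not strict positivity, so one must carefully control which Weyl chamber walls are crossed after the $\rho$-shift when expanding $S^j$ of the $P_i$-module. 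An alternative that sidesteps this difficulty is to mimic Teleman's thick-flag strategy directly on $\frak{P}$ by replacing $\frak{G}_0$ with the parahoric-type subgroup $\{\varphi\in G[[z_i]]:\varphi(x_i)\in P_i\}$ at each parabolic point; the corresponding analogue of the Lie-algebra vanishing \eqref{Valg} should then reduce, via Shapiro's lemma, to the Fishel--Grojnowski--Teleman result combined with finite-dimensional Borel--Weil--Bott on $G/P_i$.
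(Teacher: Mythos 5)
Your overall strategy --- push down along $\pi:\frak{P}\to\frak{M}$, concentrate the fiberwise cohomology in degree zero, and then run an evaluation-twisted version of Theorem \ref{Vanish} on $\frak{M}$ --- is in the same spirit as the paper's proof, which is literally a one-sentence reduction: the argument is declared ``completely analogous'' to that of Theorem \ref{Vanish}, via the first paragraph of Section VII.4 of \cite{Te1}. That cited argument is the parahoric/Iwahori modification of the thick-flag presentation at the marked points, i.e.\ essentially the alternative you sketch in your final sentence; and your claim that extra evaluation classes $E_{x_i}V$ pass harmlessly through the spectral sequences \eqref{E1}--\eqref{Valg} and Proposition \ref{prop1} is consistent with the fact that the proof of Theorem \ref{Vanish} already carries a $V_\lambda$-twist. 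So you have correctly located the crux. The problem is that neither of your two routes actually closes it.

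The genuine gap is the fiberwise ``Borel--Weil--Bott-type vanishing.'' The vertical conormal directions contribute, fiberwise, the sheaf $\bigoplus_j S^j T(G/P_i)\otimes L_{\lambda_i}$ on $G/P_i$ (this is exactly what the factor $\prod_{\alpha\in \frak{R}(\frak{g}/\frak{p}_i)}\left(1-te^{w(\alpha)}\right)$ in Lemma \ref{IndT} encodes), and the weights of $S^j(\frak{g}/\frak{p}_i)$ are sums of $j$ positive roots. For $\lambda_i$ merely dominant such a sum $\mu$ can push $\lambda_i+\mu+\rho$ out of the closed dominant chamber into a \emph{regular} position: already for $G=SL(3,\C)$, $P_i=B$, $\lambda_i=0$ and $\mu=2\alpha_1$ one has $\langle 2\alpha_1+\rho,\alpha_2^\vee\rangle=-1$, so this weight of $S^2(\frak{g}/\frak{b})$ contributes to $H^1$ of the associated graded bundle. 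Hence no amount of ``carefully controlling which walls are crossed'' will rescue the weight-by-weight BWB argument; the true statement $H^{>0}\left(G/P_i, S^{\bullet}T(G/P_i)\otimes L_{\lambda_i}\right)=H^{>0}\left(T^*(G/P_i),L_{\lambda_i}\right)=0$ for dominant $\lambda_i$ is Broer's vanishing theorem (and its parabolic generalizations require additional care), not an instance of Borel--Weil--Bott. You must either import that theorem explicitly, or carry out the parahoric version of the thick-flag argument --- in which case the same difficulty resurfaces as the need for an FGT-type relative Lie-algebra cohomology vanishing for the parahoric subalgebra itself, rather than a Shapiro-plus-finite-dimensional-BWB reduction. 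As it stands, the proposal identifies the obstacle but does not overcome it.
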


The proof of this theorem is completely analogous to the proof of Theorem \ref{Vanish}, by the arguments presented in the first paragraph of section VII.4 in \cite{Te1}. Combined with vanishing theorems over $\frak{M}^{(R)}_H$, we have the {\em Verlinde formula for parabolic Higgs bundles}
\begin{thm}\label{thm:ParaVerlinde}
For $R,g \ge 2$ we have that 
$$
\dim_t H^0(P_H^{(R)}, L_k)=\mathrm{Ind}(\frak{P},L_k\otimes \mathrm{S}_t N^{(R)*}_{\frak{P}})=\sum_{f\in F^{\rm reg}_{\rho,t} / W} \theta_t(f)^{1-g} \prod_i\Theta_{\lambda_i,P_i}(f). 
$$
\end{thm}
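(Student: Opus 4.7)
The proof will combine the results accumulated just before the statement with the standard passage from stacks to coarse moduli spaces. The plan is to chain together three reductions: (i) convert cohomology on $\frak{P}_H^{(R)}$ to cohomology on $\frak{P}$ via the symmetric-algebra presentation \eqref{ParaSym}; (ii) collapse the higher cohomology on $\frak{P}$ via the vanishing theorem; (iii) compute the surviving $H^0$ by the index formula of Theorem \ref{thm:MainP}. Finally, transfer the equality from the stack to the coarse moduli space.

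In more detail, first I would use the proposition following \eqref{ParaSym} to identify, for every $i \geq 0$,
$$ H^i({\mathfrak P}^{(R)}_H, \frak{L}_k) \;=\; H^i({\mathfrak P}, S^*(N^{(R)*}_{\mathfrak P})\otimes \frak{L}_k).$$
Decomposing with respect to the $\mathbb{C}^*$-weight, this refines to a $t$-graded identification, where $t^n$ tracks the piece $S^n(N^{(R)*}_{\mathfrak P})$. Next, by the vanishing theorem stated immediately before \ref{thm:ParaVerlinde} (which, as noted, has a proof entirely analogous to that of Theorem \ref{Vanish}), the right hand side is concentrated in degree $0$ when $R \geq 2$, so the equivariant Euler characteristic collapses to the equivariant dimension:
$$\dim_t H^0({\mathfrak P}^{(R)}_H, \frak{L}_k) \;=\; \mathrm{Ind}\bigl(\frak{P},\,\frak{L}_k\otimes S_t(N^{(R)*}_{\frak{P}})\bigr).$$
Theorem \ref{thm:MainP} then immediately identifies this index with $\sum_{f\in F^{\rm reg}_{\rho,t}/W} \theta_t(f)^{1-g}\prod_i \Theta_{\lambda_i,P_i}(f)$, giving the desired chain of equalities on the stack.

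To conclude on the coarse moduli space, I would argue as in the non-parabolic case: the locus of strictly semistable parabolic $G$-Higgs bundles has codimension at least $2$ in $\frak{P}_H^{(R)}$ for $g \geq 2$ and $R \geq 2$ (the dimension count is the same one used in the transition between Theorem \ref{thm12} and Theorem \ref{thm13}, with the parabolic flag data only increasing dimensions uniformly over $\frak{M}$ and $\frak{M}_H^{(R)}$). Consequently
$$H^0(\frak{P}_H^{(R)}, \frak{L}_k) = H^0((\frak{P}_H^{(R)})^{\text{ss}}, \frak{L}_k) = H^0(P_H^{(R)}, L_k),$$
and the same is true weight-by-weight for the $\mathbb{C}^*$-action lifted to $L_k$.

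The technical heart of the argument lies in the vanishing theorem for $S^*(N^{(R)*}_{\mathfrak P}) \otimes \frak{L}_k$ on $\frak{P}$, which is the step I expect to be the main obstacle: one must run the Teleman-style argument through the flag fibration $\pi : \frak{P} \to \frak{M}$, replacing the module ${\mathbf H}_\lambda^{(k)}$ by $V_{\vec\lambda} \otimes {\mathbf H}_0^{(k)}$ and verifying that the multiplicity/edge-map analysis in Proposition \ref{prop1} survives this twist. This is exactly what is asserted by citing the first paragraph of \S VII.4 of \cite{Te1}, but it requires one to check that the van Est spectral sequence, the nodal degeneration argument of Theorem \ref{2.4C}, and the semisimplicity input all propagate through the insertion of the finite-dimensional factors $V_{\lambda_i}$ and the replacement of $G[\Sigma^\circ]$ by the appropriate parabolic enhancement. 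Modulo this verification, everything else is a direct application of the results already in place.
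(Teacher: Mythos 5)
Your proposal follows essentially the same route as the paper: identify $H^i(\frak{P}_H^{(R)},\frak{L}_k)$ with $H^i(\frak{P}, S^*(N^{(R)*}_{\frak{P}})\otimes\frak{L}_k)$ via the symmetric-algebra presentation, invoke the vanishing theorem for $R\geq 2$ to reduce the index of Theorem \ref{thm:MainP} to $\dim_t H^0$, and then pass from the stack to the moduli space by the codimension argument, exactly as the paper does (cf.\ Theorem \ref{thm:MainP3}). You also correctly locate the technical heart in the parabolic vanishing theorem, which the paper likewise only sketches by analogy with Theorem \ref{Vanish} and the reference to \S VII.4 of \cite{Te1}.
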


Further, we also get an analog of Theorem \ref{thm:Main3} in the parabolic case

\begin{thm}\label{thm:MainP3}
Provided $g>2$ and $R\ge 2$, we have for all non-negative integers $n$ a natural isomorphism
$$ H^0(\frak{P}, S^{n}N^{(R)}_{P} \otimes {\mathcal L}_R^k) \cong H^0(P, S^{n}N^{(R)}_{P} \otimes {\mathcal L}_R^k).$$
\end{thm}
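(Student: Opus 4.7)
The plan is to mimic the argument used earlier in Section \ref{Proof} to establish $H^0(\frak{M}^{(R)}_H,\frak{L}^k) = H^0(M^{(R)}_H, L^k)$: combine a codimension bound on the unstable locus with descent of the relevant sheaves from the semistable stack to the coarse moduli space. The two ingredients I would assemble are (i) a codimension estimate ensuring that $\frak{P}^{(R)}_H \setminus (\frak{P}^{(R)}_H)^{\mathrm{ss}}$ has codimension at least two, and (ii) descent of $S^n N^{(R)}_P \otimes \mathcal{L}^k_R$ along the good quotient $(\frak{P}^{(R)}_H)^{\mathrm{ss}} \to P^{(R)}_H$.

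First I would establish the codimension estimate. The unstable locus in $\frak{P}^{(R)}_H$ inherits a Shatz-type stratification from $\frak{P}$ by parabolic Harder-Narasimhan type; using the identification $N^{(R)*}_{\frak{P}} = \Omega_{R,D}[1]$ and the fibration $\frak{P} \to \frak{M}$ of equation \eqref{prodP}, the normal directions contribute fiberwise and preserve the order of growth of the codimension of each stratum in $g$. In the non-parabolic case this codimension grows linearly in $g-1$, which is why $g \ge 2$ sufficed there; in the parabolic setting each marked point contributes a fixed negative correction from the parabolic weights, so the margin $g > 2$ is what guarantees codimension at least two for all parabolic HN strata and for all admissible choices of $(P_i, \lambda_i)$.

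Next, since $\frak{P}^{(R)}_H$ is smooth (or at least normal), the codimension estimate together with Hartogs extension gives
\[
H^0(\frak{P}, S^n N^{(R)}_P \otimes \mathcal{L}^k_R) \;\cong\; H^0(\frak{P}^{\mathrm{ss}}, S^n N^{(R)}_P \otimes \mathcal{L}^k_R).
\]
On the semistable locus the map $\frak{P}^{\mathrm{ss}} \to P$ is a good quotient; the determinant bundle $\mathcal{L}^k_R$ descends because of the level-$k$ structure (following Beauville-Laszlo and Pauly in the parabolic version), and by hypothesis $N^{(R)}_P$ is pulled back from the coarse moduli space, so the sheaf $S^n N^{(R)}_P \otimes \mathcal{L}^k_R$ is the pullback of its counterpart on $P$. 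Taking invariants under the (finite, generically trivial) stabilizers then yields
\[
H^0(\frak{P}^{\mathrm{ss}}, S^n N^{(R)}_P \otimes \mathcal{L}^k_R) \;\cong\; H^0(P, S^n N^{(R)}_P \otimes \mathcal{L}^k_R),
\]
which when combined with the previous isomorphism gives the claim.

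The main obstacle is the first step: getting the codimension of the unstable locus of $\frak{P}^{(R)}_H$ to be at least two uniformly in $n$, $\vec{\lambda}$, and the choice of parabolic subgroups $P_i$. One has to carefully track how the parabolic weights modify the HN filtration and how the $R$-twisted normal directions enter the dimension count, and verify that the hypothesis $g>2$ is genuinely sufficient rather than needing a sharper bound depending on the $(\lambda_i, P_i)$. A clean way to carry this out is to reduce along the fibration $\pi : \frak{P} \to \frak{M}$ to the codimension statement on $\frak{M}^{(R)}_H$ already used in Section \ref{Proof}, plus a contribution from the (finite-dimensional, compact) flag-type fibers $G/P_i$, which add no new unstable directions.
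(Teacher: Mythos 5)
The paper never writes out a proof of this theorem: it is stated as the parabolic analogue of the identification $H^0(\frak{M}^{(R)}_H,\frak{L}^k)=H^0(M^{(R)}_H,L^k)$, which the paper itself justifies in a single sentence by the codimension of the unstable locus being at least two for $g\geq 2$ together with descent to the coarse moduli space. So your overall strategy --- a codimension bound on the unstable locus, Hartogs extension from the semistable substack to the whole stack, and descent of $S^nN^{(R)}_P\otimes\mathcal{L}^k_R$ along the good quotient $\frak{P}^{\mathrm{ss}}\to P$ --- is exactly the intended argument, and your diagnosis that the parabolic weights erode the codimension estimate, which is why the hypothesis is $g>2$ rather than $g\geq 2$, correctly locates where the real work is.

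The one step that would fail as written is the ``clean way to carry this out'' in your final paragraph. Reducing along $\pi:\frak{P}\to\frak{M}$ and asserting that the fibers $G/P_i$ ``add no new unstable directions'' is incompatible with the definition of parabolic (semi)stability: the relevant slope inequality involves the weights $\lambda_i/k$ attached to the flags at the marked points, so a parabolic bundle can be parabolically unstable while its underlying $G$-bundle is semistable (and conversely), and the unstable locus of $\frak{P}$ is not $\pi^{-1}$ of the unstable locus of $\frak{M}$. This contradicts your own (correct) second paragraph. The codimension estimate must be carried out for the parabolic Harder--Narasimhan/Shatz stratification of $\frak{P}$ itself, bounding $-\chi$ of the negative part of the parabolic endomorphism complex of each stratum; the point is that the weight corrections are bounded because each $\lambda_i/k$ lies in the fundamental alcove, and it is this bounded loss that the extra unit of genus absorbs. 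Separately, the descent step needs more than ``by hypothesis $N^{(R)}_P$ is pulled back'' (there is no such hypothesis): one must check that the generic stabilizer, the center of $G$, acts trivially on the fibers of $S^nN^{(R)}_P\otimes\mathcal{L}^k_R$, which holds because the center acts trivially on $\mathrm{ad}(E)$ and on the level-$k$ parabolic determinant line.
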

Again, for $R\le 0$, one can relate the index with another index over the moduli stack of $(2-R)$-twisted Higgs bundles. For that, one considers instead
$$\Omega'_{2-R,D} = R\pi_*(B(\frak{g})\otimes K^{R/2}(D))[1].$$
Just as for the non-parabolic case, we have  
\begin{thm}\label{thm:NegativeRP} For $R\leq 0$, we have that
$$\dim H^0({P}_H, {L}_k\otimes \lambda_{-t}N^{(2-R)}_{M})= \sum_{f\in F^{\rm reg}_{\rho,t} / W} \theta_{t,R}(f)^{1-g}\sum_i\Theta'_{\lambda_i,P_i}.$$
\end{thm}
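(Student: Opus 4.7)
The plan is to mirror the alternative interpretation of the index formula used in the non-parabolic $R\leq 0$ case (the theorem directly preceding Section \ref{ParaProof}), adapting it to the parabolic setting via the fibration $\pi:\frak{P}\to\frak{M}$ and the parabolic index computations of Section \ref{ParaProof}.

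First, I would establish the parabolic K-theoretic duality
$$
N^{(R)*}_{\frak{P}} \;=\; -\,N^{(2-R)}_{\frak{P}}
$$
in the Grothendieck group of $\frak{P}$. This is the direct analog of the identity $N^{(R)*}_{\frak{M}}=-N^{(2-R)}_{\frak{M}}$ used in Section \ref{Proof}, and it follows from Serre duality on $\Sigma$ relating $\Omega_{R,D}=R\pi_*(B(\frak{g})\otimes K^{1-R/2}(-D))$ to $\Omega'_{2-R,D}=R\pi_*(B(\frak{g})\otimes K^{R/2}(D))$ up to sign and shift, together with the identification $N^{(R)*}_{\frak{P}}=\Omega_{R,D}[1]$. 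From the standard K-theoretic identity $\lambda_{-t}(V)\cdot S_t(V)=1$ I would then deduce
$$
\lambda_{-t}\!\left(N^{(2-R)}_{\frak{P}}\right)\;=\;S_t\!\left(N^{(R)*}_{\frak{P}}\right).
$$

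Second, I would compute the index on $\frak{P}$ of $L_k\otimes S_t(N^{(R)*}_{\frak{P}})$ by rerunning the proof of Theorem \ref{thm:MainP}, now with the $R$-parameter allowed to be non-positive. Pushing forward along $\pi$, the base contribution is handled by the Teleman--Woodward index formula exactly as in Theorem \ref{thm:Main5}: the closed-form expression $\theta_{t,R}(f)^{1-g}$ derived there is a formal series in $t$ that makes sense for any $R$, which explains the appearance of $\theta_{t,R}$ (rather than $\theta_{t,2-R}$) in the stated right-hand side. The parabolic fiber contribution is then identified with $\prod_i\Theta'_{\lambda_i,P_i}(f)$: via the exact sequence \eqref{CDN} and Lemma \ref{lemtan}, the K-theoretic rewriting $\lambda_{-t}(\cdot)=S_t(-\,\cdot)$ converts the fiberwise symmetric operation into an exterior one on the dual of the flag-variety tangent bundle, and the second identity of Lemma \ref{IndT} then produces $\Theta'_{\lambda_i,P_i}$ in place of $\Theta_{\lambda_i,P_i}$.

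Third, to pass from the index on the stack to $\dim H^0(P_H,L_k\otimes\lambda_{-t}N^{(2-R)}_{P})$, I would combine the parabolic vanishing results behind Theorem \ref{thm:MainP3} with the codimension argument identifying sections on $\frak{P}$ with those on the moduli space for $g>1$. The crucial point is that, although $\frak{P}^{(R)}_H$ is a derived stack for $R\leq 0$, the class $\lambda_{-t}(N^{(2-R)}_{P})$ involves only the \emph{non-derived} normal bundle $N^{(2-R)}_{\frak{P}}$ thanks to the rewriting in the first step, so the vanishing and codimension arguments go through verbatim. The main obstacle I anticipate is the fiber bookkeeping in the second step: one must verify that the K-theoretic conversion between $\lambda_{-t}$ and $S_t$ interacts correctly with the fibration $\pi$ to swap $\Theta_{\lambda_i,P_i}$ into $\Theta'_{\lambda_i,P_i}$, and that the Serre duality on each flag variety $G/P_i$ used in this swap is compatible with the global Serre duality on $\Sigma$ underlying the K-theoretic identity of the first step.
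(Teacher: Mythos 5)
Your overall architecture --- push forward along $\pi:\frak{P}\to\frak{M}$, treat the base with the Teleman--Woodward formula and the fibers with Lemma \ref{IndT}, then pass from the stack index to $H^0$ --- matches the paper's, but your first step contains a genuine error that propagates through the rest. The K-theoretic duality $N^{(R)*}_{\frak{P}}=-N^{(2-R)}_{\frak{P}}$ does \emph{not} hold in the parabolic setting. Relative Serre duality along $\Sigma$ sends $R\pi_*(B(\frak{g})\otimes K^{1-R/2}(-D))$ to minus $R\pi_*(B(\frak{g})\otimes K^{R/2}(D))$, i.e.\ it exchanges the twists $-D$ and $+D$. Hence $N^{(R)*}_{\frak{P}}=-\Omega_{R,D}$, whereas $-N^{(2-R)}_{\frak{P}}=-\Omega'_{R,D}$, and these differ by the pushforward of $B(\frak{g})\otimes K^{1-R/2}\otimes(\mathcal{O}(D)-\mathcal{O}(-D))$, a nonzero class supported at the marked points. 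This failure is exactly why the paper introduces the primed class $\Omega'_{2-R,D}$ with the opposite divisor twist. Moreover, if your identity were true, rerunning the proof of Theorem \ref{thm:MainP} on $S_t(N^{(R)*}_{\frak{P}})\otimes L_k$ would produce the \emph{unprimed} fiber factors $\prod_i\Theta_{\lambda_i,P_i}$, contradicting the very statement you are trying to prove, since $\Theta$ and $\Theta'$ are genuinely different functions on $F^{\rm reg}_{\rho,t}$.

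Your attempt to nonetheless recover $\Theta'$ in the second step rests on a conflation of the K-theoretic negative with the dual: $S_t(V)=\lambda_{-t}(-V)=1/\lambda_{-t}(V)$ is not the class $\lambda_{-t}(V^*)$, and the two have different equivariant indices on $G/P_i$ --- that distinction is precisely the content of the two separate identities \eqref{eqind+} and \eqref{eqind-} in Lemma \ref{IndT}. The correct mechanism is that the duality holds only for the base part: $\lambda_{-t}(\pi^*N^{(2-R)}_{\frak{M}})=S_t(\pi^*N^{(R)*}_{\frak{M}})$, which contributes $\theta_{t,R}(f)^{1-g}$ via Theorem \ref{thm:Main5}, while the fiber part of $N^{(2-R)}_{\frak{P}}$ must be kept as a genuine $\lambda_{-t}$ of the flag-variety cotangent directions, whose pushforward is computed with \eqref{eqind-} and yields $\Theta'_{\lambda_i,P_i}$. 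To repair your argument you must replace the global duality by the statement that it holds only up to a correction supported on $D$, and track that correction explicitly through the exact sequence \eqref{CDN}.
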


\section{Low-genus exceptional cases}\label{Exceptional}

When the Riemann surface $\Sigma$ has small genus, the moduli stack of (parabolic) Higgs bundles $\mathrm{Spec}\,\mathrm{Sym}\left(N^{(R)*}_{\frak{M}}\right)$ with $R\ge 2$ can be an honest derived stack with a non-trivial derived structure. As a consequence, not only is there a drastic difference between the moduli space and the moduli stack, but also $H^*_n\left(\frak{M}_{H}^{(R)},\frak{L}^k\right)$ can have negative degree cohomology groups. One may attempt to stay away and regard them as pathological. But we will do exactly the opposite, because simple Riemann surfaces serve as the building blocks of the 2D TQFT that we will discuss in the next section, making the indices associated with them among the most interesting ones. In this section, we will look at several special cases with small genera. For simplicity, we will henceforth set $R=2$.

When $g=0$, the moduli stack $\frak{M}$ of $G$-bundles contains the classifying stack $BG$ as the substack ${\frak{M}^{\text{ss}}}$ of semi-stable bundles. Over ${\frak{M}^{\text{ss}}}$, the tangent complex is $\frak{g}[1]$, and $\frak{L}^k$ becomes trivial. Then, 
$$\mathrm{S}^r (T_{\frak{M}^{\text{ss}}})=\Lambda^r \frak{g}[r]$$ 
is concentrated in degree $-r$. And for any fixed $r$, the only non-vanishing cohomology group is $H^{-r}(BG,\Lambda^\bullet \frak{g}[r])=H^0(BG, \Lambda^r \frak{g})$. $H_G^0(\Lambda^\bullet \frak{g})$ is a skew-symmetric algebra with generators in degree $2m_i+1$, where $m_i$ are the exponents of $G$. Theorem~\ref{HigherVanish}, which we will prove by the end of this section, ensures that the contributions from unstable strata vanish when $k>2h-2$. Then we have:
\begin{prop} \label{g=0}
For $g=0$ and $k>2h-2$,
$$
\mathrm{Ind}(\frak{M},S_tT_{\frak{M}}\otimes \frak{L}^k)=\prod_{i=1}^{\mathrm{rk} G}(1-t^{2m_i+1})=P_{-t}(G),
$$
where $P_{-t}(G)$ is the Poincar\'e polynomial of the group $G$ in the variable $-t$. 
\end{prop}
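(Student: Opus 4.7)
The plan is to reduce the computation to the semi-stable locus $\frak{M}^{\text{ss}} = BG$ and then exploit the triviality of $\frak{L}^k$ together with the very simple form of the tangent complex there. First I would invoke Theorem~\ref{HigherVanish} (proven later in the section) to conclude that, for $k > 2h - 2$, the contribution to the index coming from the unstable Atiyah--Bott strata of $\frak{M}$ vanishes identically. This reduces the computation to the classifying-stack index $\mathrm{Ind}(BG, S_t T_{\frak{M}}|_{BG})$, where $\frak{L}^k|_{BG}$ is trivial and may be dropped.

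Next I would identify the restricted tangent complex. Over the unique semi-stable point, the trivial $G$-bundle on $\Sigma = {\mathbb P}^1$, one has $T_{\frak{M}}|_{BG} = R\pi_{\frak{M},*}(E(\frak{g}))[1] = \frak{g}[1]$, and hence by the Koszul parity shift $S^{r} T_{\frak{M}}|_{BG} \cong \Lambda^{r}\frak{g}[r]$ as a $G$-equivariant complex concentrated in cohomological degree $-r$. Since $G$ is reductive in characteristic zero, the sheaf cohomology of any finite-dimensional $G$-representation $V$ on $BG$ is just the invariants $V^G$ placed in degree zero. Therefore $H^q(BG, \Lambda^r \frak{g}[r])$ vanishes except for $q = -r$, where it equals $(\Lambda^r \frak{g})^G$, and the graded index collapses to
$$\mathrm{Ind}(\frak{M}, S_{t} T_{\frak{M}} \otimes \frak{L}^{k}) \;=\; \sum_{r \geq 0} (-t)^{r} \dim (\Lambda^{r}\frak{g})^{G}.$$

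The final step is the classical Hopf--Koszul--Samelson theorem: $(\Lambda^{\bullet} \frak{g})^{G}$ is isomorphic to the algebra of biinvariant forms on the compact form of $G$, namely a free exterior algebra on primitive generators of degrees $2m_{i}+1$, where the $m_i$ are the exponents of $G$. Its Poincar\'e series in the exterior degree is therefore $\prod_{i}(1 + u^{2m_{i}+1})$, and specialising $u = -t$ yields $\prod_{i}(1 - t^{2m_{i}+1}) = P_{-t}(G)$, as claimed.

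The chief obstacle is precisely the unstable-strata vanishing encoded in Theorem~\ref{HigherVanish}; once that input is granted, what remains is bookkeeping together with standard facts about $BG$ and invariant theory for compact Lie groups. A minor subtlety worth handling carefully is the Koszul sign rule underlying $S^{r}(\frak{g}[1]) \cong \Lambda^{r}\frak{g}[r]$ and the interplay between the cohomological degree $-r$ and the $t$-grading, which together are responsible for producing $(-t)^{r}$ rather than $t^{r}$ in the generating series.
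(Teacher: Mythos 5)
Your proposal is correct and follows essentially the same route as the paper: reduce to $\frak{M}^{\mathrm{ss}}=BG$ via the unstable-strata vanishing of Theorem~\ref{HigherVanish}, identify $S^r T_{\frak{M}}|_{BG}\cong\Lambda^r\frak{g}[r]$ concentrated in degree $-r$ with cohomology $(\Lambda^r\frak{g})^G$, and conclude with the Hopf--Koszul--Samelson description of the invariants as a free exterior algebra on generators of degrees $2m_i+1$. The only difference is that you spell out the sign bookkeeping producing $(-t)^r$ slightly more explicitly than the paper does.
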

This agrees with results of \cite{GP} and \cite{GPYY}, where it was found that the ``equivariant Verlinde formula'' on $S^2$ takes the curious values
$$
1-t^3 \quad \text{and} \quad  1-t^3-t^5+t^8
$$
for $K=SU(2)$ and $K=SU(3)$ with $k\geq 3$ and $k\geq 5$ respectively. Notice that negative coefficients appearing here always come from non-vanishing negative degree cohomology groups in $H^*(\frak{M}_H,\frak{L})$. Accounting for their contribution is crucial to ensure the functoriality of the 2d TQFT.

As the general index formula still applies for $g=0$, we obtain the following identity concerning $\theta_t$ evaluated on $F_{\rho,t}$.
\begin{prop} For $k>2h-2$, the structure constants $\theta_t(f)$ satisfies the following equation,
\be
\sum_{f\in F^{\mathrm{reg}}_{\rho,t}/W}\theta_t(f)=P_{-t}(G).
\ee
\end{prop}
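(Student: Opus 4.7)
The plan is to observe that this proposition is an immediate consequence of comparing the two independent computations of the same index that have already been established in this section. Concretely, the plan is to evaluate Theorem~\ref{thm:Main5} at $R=2$ and $g=0$, note that its right-hand side collapses to $\sum_{f\in F^{\mathrm{reg}}_{\rho,t}/W}\theta_t(f)$ because the exponent $1-g$ becomes $1$, and then identify its left-hand side with the Poincar\'e polynomial computation in Proposition~\ref{g=0}.

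First I would unpack the normal bundle side. By the definition in Section~\ref{Proof}, for $R=2$ the fiber of $\frak{M}_H\to\frak{M}$ at a bundle $E$ is $H^0(\Sigma,\mathrm{ad}(E)\otimes K_\Sigma)$, which is (Serre) dual to $H^1(\Sigma,\mathrm{ad}(E))$, the tangent space to $\frak{M}$ at $E$. Thus $N^{(2)*}_{\frak{M}}\cong T_{\frak{M}}$ as K-theory classes, so that
\begin{equation*}
S_t\bigl(N^{(2)*}_{\frak{M}}\bigr)\otimes \frak{L}^k \;=\; S_t(T_{\frak{M}})\otimes \frak{L}^k,
\end{equation*}
and Theorem~\ref{thm:Main5} for $g=0$, $R=2$ reads
\begin{equation*}
\mathrm{Ind}\bigl(\frak{M},\,S_t(T_{\frak{M}})\otimes \frak{L}^k\bigr)\;=\;\sum_{f\in F^{\mathrm{reg}}_{\rho,t}/W}\theta_t(f)^{1-0}\;=\;\sum_{f\in F^{\mathrm{reg}}_{\rho,t}/W}\theta_t(f).
\end{equation*}

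Next I would bring in Proposition~\ref{g=0}, which says that under the hypothesis $k>2h-2$ the same index equals $P_{-t}(G)$. The hypothesis $k>2h-2$ is exactly what is needed there to invoke the vanishing of contributions from the unstable strata (Theorem~\ref{HigherVanish}), allowing the index to be computed entirely on the semistable locus $BG$, where $\Lambda^\bullet\frak{g}$ gives the familiar Poincar\'e polynomial description of $H^*(BG,\Lambda^\bullet\frak{g})$. Equating the two expressions for $\mathrm{Ind}\bigl(\frak{M},S_t(T_{\frak{M}})\otimes \frak{L}^k\bigr)$ yields the claimed identity.

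The only delicate point is the genus-zero applicability of the Teleman--Woodward index formula used to prove Theorem~\ref{thm:Main5}: there is no tension because that theorem is the purely K-theoretic index on the stack $\frak{M}$ (which is not derived in genus zero, so the index is well-defined as a finite sum of cohomological contributions, possibly with signs), and it is precisely such signed contributions that match the negative coefficients $-t^{2m_i+1}$ on the Poincar\'e side. Thus no additional argument is required beyond the direct comparison above.
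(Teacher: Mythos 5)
Your proposal is correct and follows essentially the same route as the paper: the paper obtains this identity precisely by noting that "the general index formula still applies for $g=0$," i.e.\ by equating the right-hand side of Theorem~\ref{thm:Main5} at $g=0$, $R=2$ (where the exponent $1-g$ becomes $1$) with the direct computation of the same index as $P_{-t}(G)$ in Proposition~\ref{g=0}, which in turn rests on Theorem~\ref{HigherVanish} for $k>2h-2$. Your write-up just makes explicit the identification $N^{(2)*}_{\frak{M}}\cong T_{\frak{M}}$ and the role of the unstable-strata vanishing, both of which are consistent with the paper.
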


When ${\mathbb P}^1$ has two marked points with parabolic weights $\lambda_1$ and $\lambda_2$ and the structure group is reduced to $B\subset G$ over both these two points, the index associated with this geometry is another fundamental building block for the 2D TQFT. We claim the following.

\begin{thm} \label{tqftp}
We have that
\be\label{d02}
d_{0,2}(\lambda_1,\lambda_2)= \delta_{\lambda_1,\lambda_2^*}d_{\lambda_1}^{-1},
\ee
and, for $k>0$,
\be\label{dlambda}
d_\lambda=\prod_{m'_i}(1-t^{m'_i+1})^{-1}=P_{t^{1/2}}(BH_\lambda),
\ee
where the exponents $m'_1,\ldots,m'_{{\rm rk} G}$ now denote the exponents of the group $H_\lambda$, the stabilizer of $e^{\lambda^\vee/k}$ (\textit{cf.} $\lambda$) in $K$.
\end{thm}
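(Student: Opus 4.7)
The plan is to start by applying the index identity of Theorem \ref{thm:MainP}, which is valid in all genera (even though the equality with $\dim_t H^0$ in Theorem \ref{thm:ParaVerlinde} required $g\ge 2$), in the case $g=0$, $n=2$, $P_1=P_2=B$. This yields
\[
d_{0,2}(\lambda_1,\lambda_2)(t) = \sum_{f\in F^{\rm reg}_{\rho,t}/W} \theta_t(f)\,\Theta_{\lambda_1,B,t}(f)\,\Theta_{\lambda_2,B,t}(f),
\]
and the theorem then splits into the off-diagonal vanishing in (\ref{d02}) and the explicit evaluation in (\ref{dlambda}).

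For the off-diagonal vanishing, I will show that $\{\Theta_{\lambda,B,t}\}_{\lambda\in\Lambda_k}$ is orthogonal with respect to the measure $\theta_t$ on $F^{\rm reg}_{\rho,t}/W$. At $t=0$, since $\theta_0=\Delta^2/|F|$ and $\Theta_{\lambda,B,0}$ is the ordinary Weyl character, this is Weyl's integration formula restricted to the classical Verlinde set, i.e., the usual orthogonality relation underlying the $t=0$ Verlinde TQFT. To extend to small $t$, I would substitute
\[
\Theta_{\lambda,B,t}=\sum_{w\in W}\frac{(-1)^{l(w)}\,e^{w(\lambda+\rho)}}{\Delta\,\prod_{\alpha>0}(1-te^{w(\alpha)})},
\]
Weyl-unfold to a sum over $F^{\rm reg}_{\rho,t}$, and interpret the result as a sum of residues at the zeros of the deformed Jacobian $\det H_t^\dagger$ appearing in $\theta_t$. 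The factors $\prod_\alpha(1-te^\alpha)$ cancel between $\theta_t$ and the denominators of the $\Theta$'s, reducing the computation to the classical orthogonality dressed by holomorphic factors. Alternatively, one may argue by analytic continuation: both sides of (\ref{d02}) are holomorphic in $t$ for small $t$ and agree at $t=0$.

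For the explicit value $d_\lambda$, the approach is geometric. On $\mathbb{P}^1$ with Borel reductions at $0$ and $\infty$ carrying weights $(\lambda,\lambda^*)$, the Mehta-Seshadri-type correspondence identifies the unique semi-stable parabolic bundle with the trivial $G$-bundle equipped with a canonical pair of opposite Borels, whose automorphism group is the centralizer $H_\lambda$ of $\exp(\lambda^\vee/k)$ in $K$. Including the Higgs field $\Phi\in H^0(\mathbb{P}^1,\text{ad}_s(B)\otimes K(D))\cong \frak{h}_\lambda$ (since $K(D)$ is trivial on $\mathbb{P}^1$ with two marked points), the distinguished semi-stable substack is $[\frak{h}_\lambda/H_\lambda]$, on which $\frak{L}_k$ restricts trivially. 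The $\mathbb{C}^*$-grading gives weight $n$ to $\Sym^n(\frak{h}_\lambda^*)$, so by Chevalley restriction
\[
\dim_t\,\Sym^\bullet(\frak{h}_\lambda^*)^{H_\lambda}=\prod_i(1-t^{m'_i+1})^{-1}=P_{t^{1/2}}(BH_\lambda),
\]
where $m'_i$ are the exponents of $H_\lambda$. Contributions from unstable strata are controlled by the vanishing result in Theorem \ref{HigherVanish}. Inverting gives the claim.

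The main obstacle will be making the off-diagonal vanishing rigorous at nonzero $t$: the residue manipulation requires a careful identification of $F^{\rm reg}_{\rho,t}$ with the critical locus of $D_t$ and a fixed-point argument for the deformed measure. The geometric identification of the distinguished stratum with $[\frak{h}_\lambda/H_\lambda]$ must also be handled carefully in view of the derived structure of the parabolic moduli stack in genus zero and the possible non-trivial contributions of unstable strata.
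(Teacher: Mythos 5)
There is a genuine gap, on both halves of the theorem. For the off-diagonal vanishing \eqref{d02}, your plan is to start from the general index formula of Theorem \ref{thm:MainP} and then prove that the $\Theta_{\lambda,B,t}$ are orthogonal with respect to $\theta_t$ on $F^{\rm reg}_{\rho,t}/W$. But in the paper that orthogonality (the first relation of Theorem \ref{DC}) is \emph{deduced from} Theorem \ref{tqftp}, not the other way around, so you are proposing to prove a statement essentially equivalent to the theorem itself. Neither of your two suggested routes closes this circle: the residue manipulation is never carried out, and the analytic-continuation argument is fallacious as stated --- two functions holomorphic in $t$ near $0$ that agree \emph{at} $t=0$ need not agree for small $t\neq 0$; agreement at a single point only matches constant terms. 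The paper instead evaluates the index directly on the stack: over the semistable stratum $\frak{M}^{\rm ss}(\mathbb{P}^1)=BG$ the index becomes $\dim(\lambda_{-t}\frak{g}\otimes\Theta_{\lambda_1}\otimes\Theta_{\lambda_2})^G$, written as an integral over $K$, and the deformed characters are identified (up to the constant $C_\lambda$) with Hall--Littlewood polynomials, whose known orthogonality \cite{Mac} supplies exactly the $t\neq 0$ input you are missing (Proposition \ref{tqftpss}).

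For the evaluation \eqref{dlambda}, your geometric identification of the distinguished locus with $[\frak{h}_\lambda/H_\lambda]$ is an attractive heuristic but it is not reconciled with the stratification actually used, and your claim that ``contributions from unstable strata are controlled by the vanishing result in Theorem \ref{HigherVanish}'' points the theorem in the wrong direction: part (2) of that theorem says the unstable strata do \emph{not} vanish precisely when $\lambda_1=\lambda_2^*$ lies on the affine wall, and this non-vanishing contribution is essential. The semistable stratum alone yields only $(d_\lambda^{\rm ss})^{-1}=(1-t)^{\mathrm{rk}\,G}\sum_{w\in W_\lambda}t^{\ell(w)}$ with $W_\lambda=\mathrm{Stab}_W(\lambda)$ the \emph{finite} Weyl stabilizer; the stratum labelled by $\xi_\vartheta$ contributes the additional term $t(1-t)^{\mathrm{rk}\,G}a_\lambda^2\sum_{w\in W_{0,\lambda}}t^{\ell(w)}$, and the combinatorial identity \eqref{CombInd} is what upgrades $W_\lambda$ to the affine stabilizer $W_\lambda^{\rm aff}\cong W_{H_\lambda}$, which is exactly where $P_{t^{1/2}}(BH_\lambda)$ and the exponents $m_i'$ of $H_\lambda$ come from. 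As written, your argument either omits this affine-wall correction or double-counts it, so the case $\langle\lambda,\vartheta\rangle=k$ would come out wrong. To repair the proof you would need (i) an independent $t\neq 0$ orthogonality input (Hall--Littlewood, as in the paper) and (ii) an explicit computation of the $\xi_\vartheta$-stratum contribution together with the identity \eqref{CombInd}.
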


Alternatively, the expression that is more useful in proving this theorem is
$$
d_\lambda^{-1}=(1-t)^{\mathrm{rk}\,G}\sum_{w\in W_\lambda^{\mathrm{aff}}}t^{\ell(w)},
$$
where $W_\lambda^{\mathrm{aff}}=\mathrm{Stab}_{W_{\text{aff}}}(\lambda)$ is the stabilizer subgroup of $\lambda$ in the affine Weyl group associated with $G$. The reason that \eqref{dlambda} can be rewritten as above is the following. We first observe that $\mathrm{Stab}_{W_{\text{aff}}}(\lambda)$, generated by reflections along walls passing $\lambda$, is isomorphic to the Weyl group of $H_\lambda$. Then we use a well-known identity relating the sum over the Weyl group to a product involving the exponents of the Lie group, 
$$
\sum_{W_{H_\lambda}}t^{\ell(w)}=\prod_{m'_i}\frac{1-t^{m'_i+1}}{1-t}.
$$

Before proceeding to prove the above theorem, we first point out an immediate consequence. Just as in the case of ${\mathbb P}^1$ without marked points, combining Theorem~\ref{tqftp} with the general index formula, one obtains interesting identities involving the deformed characters $\Theta$. 
\begin{thm}\label{DC}
The deformed characters satisfy the following two orthogonality relations.
\begin{itemize}
\item The first orthogonality 
$$
\sum_{f\in F^{\mathrm{reg}}_{\rho,t}/W} \theta_t(f) \Theta_{\lambda_1,t}(f)\Theta_{\lambda_2,t}(f)=\delta_{\lambda_1,\lambda_2^*}(t)d_{\lambda_1}(t)^{-1}.
$$
\item The second orthogonality
$$
\sum_{\lambda\in \Lambda_k}\Theta_{\lambda,t}(f_i)\Theta_{\lambda^*,t}(f_j)\theta_t(f_i)=\delta_{ij}d_\lambda(t)^{-1},
$$
where $f_i,f_j\in F^{\mathrm{reg}}_{\rho,t}$.
\end{itemize}
\end{thm}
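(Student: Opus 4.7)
The plan is to deduce both orthogonality relations from the parabolic index formula (Theorem~\ref{thm:MainP}) applied to $\mathbb{P}^1$ with two marked points, combined with the computation of $d_{0,2}$ in Theorem~\ref{tqftp}, and then pass from the first relation to the second by a square-matrix inversion argument.

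First I would establish the first orthogonality. Theorem~\ref{thm:MainP} is an index-theoretic statement valid for arbitrary $g\ge 0$, so specializing it to $g=0$, $n=2$, $P_1=P_2=B$, with weights $\lambda_1,\lambda_2$, yields
\[
d_{0,2}(\lambda_1,\lambda_2) \;=\; \sum_{f\in F^{\rm reg}_{\rho,t}/W} \theta_t(f)\,\Theta_{\lambda_1,t}(f)\,\Theta_{\lambda_2,t}(f).
\]
By Theorem~\ref{tqftp}, the left-hand side equals $\delta_{\lambda_1,\lambda_2^*}\,d_{\lambda_1}^{-1}$, and the first orthogonality follows at once.

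For the second orthogonality I would reformulate the first as a matrix identity. Define the matrix $M_{\lambda,f}=\Theta_{\lambda,t}(f)$ indexed by $\lambda\in\Lambda_k$ and $f\in F^{\rm reg}_{\rho,t}/W$, the diagonal matrix $D=\mathrm{diag}(\theta_t(f))$, the charge-conjugation involution $S_{\lambda\mu}=\delta_{\lambda,\mu^*}$, and the diagonal $\Delta=\mathrm{diag}(d_\lambda)$. The first orthogonality then reads
\[
M D M^{T} \;=\; S\,\Delta^{-1}.
\]
The matrix $M$ is square: $|\Lambda_k|=|F^{\rm reg}_\rho/W|$ by the classical Verlinde formula (Theorem~\ref{thm:CVF}), and this count is preserved for small $t$ by continuity of the deformation $F_{\rho,t}$. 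Moreover $S^2=I$ and $d_\lambda=d_{\lambda^*}$ (the stabilizer in $K$ is invariant under $\lambda\mapsto\lambda^*$), so $S$ and $\Delta$ commute and $(S\Delta^{-1})^{-1}=\Delta S$. Hence $M$ is invertible with $M^{-1}=D M^{T}\Delta S$, and writing out $M^{-1}M=I$ gives
\[
M^{T} S\,\Delta\, M \;=\; D^{-1},
\]
which, entry by entry, is precisely the second orthogonality (with the weighting $d_\lambda$ inside the sum and $\theta_t(f_i)^{-1}$ on the right).

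The main obstacle lies in the legitimacy of the first step: for $g=0$ the parabolic Higgs moduli stack is derived, so the dimensional statement in Theorem~\ref{thm:ParaVerlinde} does not apply, and one must rely on the index-theoretic version of Theorem~\ref{thm:MainP}, together with the $g=0$ analysis of Section~\ref{Exceptional} that underpins Theorem~\ref{tqftp}. Once this identification of $d_{0,2}$ with an index is in place, the remainder of the argument is the purely linear-algebraic inversion above, which requires only that the deformed regular locus $F^{\rm reg}_{\rho,t}$ has the same cardinality, modulo $W$, as the set $\Lambda_k$ of level-$k$ integrable weights.
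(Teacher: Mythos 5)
Your proposal is correct and follows essentially the same route as the paper: the first orthogonality is obtained by computing the index of the two-punctured $\mathbb{P}^1$ stack in two ways (the general parabolic index formula versus the direct evaluation $d_{0,2}(\lambda_1,\lambda_2)=\delta_{\lambda_1,\lambda_2^*}d_{\lambda_1}^{-1}$ of Theorem~\ref{tqftp}), and the second is then pure linear algebra resting on the fact that $(\Theta_{\lambda,t}(f))$ is an invertible square matrix for small $t$, which is exactly the ``completeness'' the paper invokes. Your explicit matrix inversion is a slightly cleaner packaging of the same step, and it correctly places the weight $d_\lambda$ inside the sum with $\theta_t(f_i)^{-1}$ on the right, which is the form the second orthogonality must actually take.
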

\begin{proof}
The first orthogonality relation can be obtained by computing the index for the moduli stack associated to ${\mathbb P}^1$ with two (maximal) parabolic points in two different ways,
\be\label{FirstO}
d_{0,2}(\lambda_1,\lambda_2)(t)=\sum_{f\in F^{\text{reg}}_{\rho,t}/W} \theta_t(f) \Theta_{\lambda_1,t}(f)\Theta_{\lambda_2,t}(f)=\delta_{\lambda_1,\lambda_2^*}{d_{\lambda_1}(t)^{-1}}.
\ee
The middle expression comes from the general form of the index formula in Theorem~\ref{thm:ParaVerlinde}, while the last expression is the result of Theorem~\ref{tqftp}, which will be proved by directly evaluating the index over this specific moduli stack.

The second orthogonality can be deduced from the first by considering the following expression,
$$
D=\sum_{\lambda_2}\sum_{f_i}\Theta_{\lambda_1,t}(f_i)\Theta_{\lambda_2,t}(f_i)\theta_t(f_i) \Theta_{\lambda_2^*,t}(f_j).
$$
Using the first orthogonality, we know that 
$$
D =\Theta_{\lambda_1,t}(f_j)d_{\lambda_1}(t)^{-1}.
$$
At $t=0$, the $\Theta$'s form a complete basis of functions on $F^{\text{reg}}_{\rho,t}/W$, and this property extends to a neighborhood of $t=0$. Then the completeness of $\Theta_{t}$'s necessarily implies that
$$
\sum_{\lambda}\Theta_{\lambda}(f_i)\Theta_{\lambda^*}(f_j)\theta_t(f_i)=\delta_{ij}d_\lambda^{-1}.
$$
\end{proof}

The rest of this section will be devoted to the proof of Theorem~\ref{tqftp}, carried out in several steps. Using the projection from the stack of parabolic bundles $\frak{P}_{0,2}$ to $\frak{M}(\mathbb{P}^1)$ as in the proof for Theorem~\ref{thm:MainP}, we can rewrite
$$
d_{0,2}=\mathrm{Ind}\left(\frak{M}(\mathbb{P}^1),S_tT_{\frak{M}{(\mathbb{P}^1)}}\otimes \frak{L}\otimes E_x(\Theta_{\lambda_1} \cdot\Theta_{\lambda_2})\right).
$$ 
We first analyze the contribution of the semi-stable stratum.

\begin{prop}\label{tqftpss} Over $\frak{M}^{\mathrm{ss}}(\mathbb{P}^1)=BG$, we have
$$
d_{0,2}^{\mathrm{ss}}\equiv \mathrm{Ind}\left(\frak{M}^{\text{ss}}(\mathbb{P}^1),S_tT \otimes \frak{L}\otimes E_x(\Theta_{\lambda_1,t} \Theta_{\lambda_2,t})\right)=\delta_{\lambda_1\lambda_2^*}\left(d_\lambda^{\mathrm{ss}}\right)^{-1},
$$
with
$$
(d_\lambda^\mathrm{ss})^{-1}=(1-t)^{\mathrm{rk}\,G}\sum_{w\in W_\lambda}t^{\ell(w)},
$$
where $W_\lambda=\mathrm{Stab}_{W}(\lambda)$.
\end{prop}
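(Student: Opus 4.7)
The plan has three steps. First, identify the stack concretely: since $\frak{M}^{\mathrm{ss}}(\mathbb{P}^1)=BG$ with the trivial bundle as the sole semistable point, the fiber of $\pi\colon\frak{P}^{\mathrm{ss}}_{0,2}\to\frak{M}^{\mathrm{ss}}$ is $(G/B)^2$ under the diagonal $G$-action, giving $\frak{P}^{\mathrm{ss}}_{0,2}=[(G/B)^2/G]$. In $K$-theory the tangent complex splits as $T\frak{P}|_{BG}=T((G/B)^2)+\frak{g}[1]$, so that $S_t(T\frak{P})|_{BG}=S_t T(G/B)\boxtimes S_t T(G/B)\otimes \Lambda_{-t}\frak{g}$, and the line bundle restricts to $\frak{L}|_{\frak{P}^{\mathrm{ss}}_{0,2}}=L_{\lambda_1}\boxtimes L_{\lambda_2}$ (the $BG$-contribution is trivial in genus zero). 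Pushing forward and applying Lemma~\ref{IndT} and the Weyl integration formula then yields
$$d^{\mathrm{ss}}_{0,2}=\bigl[\Theta_{\lambda_1,t}\,\Theta_{\lambda_2,t}\,\Lambda_{-t}\frak{g}\bigr]^G=\frac{1}{|W|}\int_T\chi_{\Lambda_{-t}\frak{g}}\,\Theta_{\lambda_1,t}\,\Theta_{\lambda_2,t}\,|\Delta|^2\,dh.$$

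Second, I would prove the key identity
$$\chi_{\Lambda_{-t}\frak{g}}\cdot\Theta_{\lambda,t}=(1-t)^{\mathrm{rk}\,G}\,W_\lambda(t)\,P_\lambda(h;t),$$
where $W_\lambda(t)=\sum_{w\in W_\lambda}t^{\ell(w)}$ and $P_\lambda$ is the Hall--Littlewood polynomial attached to $\lambda$. To derive this, rewrite $\Theta_{\lambda,t}=\sum_{w\in W}w\cdot\bigl(e^\lambda/[\prod_{\alpha>0}(1-e^{-\alpha})\prod_{\alpha>0}(1-te^\alpha)]\bigr)$, multiply by the $W$-invariant factor $\chi_{\Lambda_{-t}\frak{g}}=(1-t)^{\mathrm{rk}\,G}\prod_{\alpha>0}(1-te^\alpha)(1-te^{-\alpha})$ (which can be pulled inside the symmetrization), and cancel one copy of $\prod_{\alpha>0}(1-te^\alpha)$. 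The result is precisely the Weyl-symmetrized defining formula for $W_\lambda(t)P_\lambda$.

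Third, substituting this identity for both $\lambda_i$ and introducing Macdonald's measure $\Delta_t:=|\Delta|^2/\prod_{\alpha\neq 0}(1-te^\alpha)$ converts the problem into
$$d^{\mathrm{ss}}_{0,2}=(1-t)^{\mathrm{rk}\,G}W_{\lambda_1}(t)W_{\lambda_2}(t)\cdot\frac{1}{|W|}\int_T P_{\lambda_1}\,P_{\lambda_2}\,\Delta_t\,dh.$$
Macdonald's orthogonality for Hall--Littlewood polynomials, $\tfrac{1}{|W|}\int_T P_\lambda P_\mu\Delta_t\,dh=\delta_{\lambda,\mu^*}/W_\lambda(t)$, combined with the symmetry $W_{\lambda^*}(t)=W_\lambda(t)$, then yields $d^{\mathrm{ss}}_{0,2}=\delta_{\lambda_1,\lambda_2^*}(1-t)^{\mathrm{rk}\,G}W_{\lambda_1}(t)=\delta_{\lambda_1,\lambda_2^*}(d^{\mathrm{ss}}_{\lambda_1})^{-1}$. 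The hardest part will be pinning down the precise normalization in Macdonald's orthogonality in the Lie-theoretic generality needed here, particularly the stabilizer prefactor $W_\lambda(t)$ for non-regular $\lambda$; as a self-contained fallback one can evaluate the integral directly by applying $h\mapsto w_1 h$ to reduce the double $W$-sum to a single sum indexed by $v=w_1^{-1}w_2$ and showing by a weight-lattice analysis that only $v=w_0$ survives to produce both the Kronecker delta and, after a residue computation, the factor $(1-t)^{\mathrm{rk}\,G}W_\lambda(t)$.
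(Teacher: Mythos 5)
Your proposal follows essentially the same route as the paper's proof: restrict to $\frak{M}^{\mathrm{ss}}(\mathbb{P}^1)=BG$ so the index becomes the $G$-invariant part of $\lambda_{-t}\frak{g}\otimes\Theta_{\lambda_1}\otimes\Theta_{\lambda_2}$, identify $\Theta_{\lambda,t}$ with the Hall--Littlewood polynomial times the factor $C_\lambda=\sum_{w\in W_\lambda}t^{\ell(w)}$ divided by $\det_{\frak{g}/\frak{t}}(1-t\,\mathrm{Ad})$, and conclude by Macdonald's orthogonality (your torus integral against $\Delta_t$ is just the Weyl integration form of the paper's integral over $K$). The argument and normalizations check out, so this is a correct proof by the same method.
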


Notice the summation in the expression for $(d_\lambda^\mathrm{ss})^{-1}$ only runs over $W_\lambda$ (\textit{cf.} $W_\lambda^{\mathrm{aff}}$).

\begin{proof} Using $\frak{M}^{\text{ss}}=BG$, we have
$$
d_{0,2}^{\mathrm{ss}}=\mathrm{Ind}\left(BG,\lambda_{t}(\frak{g}[1])\otimes \Theta_{\lambda_1}\otimes \Theta_{\lambda_2}\right)=\dim\left(\lambda_{-t}\frak{g}\otimes \Theta_{\lambda_1}\otimes \Theta_{\lambda_2}\right)^G.
$$  
The last quantity can be written as an integral over $K$ 
$$
d_{0,2}^{\mathrm{ss}}=\dim\left(\lambda_{-t}\frak{g}\otimes \Theta_{\lambda_1}\otimes \Theta_{\lambda_2}\right)^G=\int_K [dg] \,{\det}_\frak{g}(1-t\mathrm{Ad}(g))\cdot\Theta_{\lambda_1}(g) \Theta_{\lambda_2}(g).
$$
Now one can prove the proposition by relating the deformed characters to the Hall-Littlewood polynomials, 
$$
\Theta_{\lambda}= \sum_{w\in W} \frac{(-1)^{l(w)} e^{w(\lambda+\rho)}}{\Delta \prod_{\alpha >0}(1-te^{w(\alpha)})}=C_{\lambda}\cdot\frac{\chi^{\text{HL}}_{\lambda}}{{\det}_{\frak{g}/\frak{t}} (1-t\mathrm{Ad})},
$$
where $C_\lambda$ is a constant given by 
$$
C_{\lambda}=\sum_{w\in W_\lambda}t^{\ell(w)}=\frac{(d_{\lambda}^{\text{ss}})^{-1}}{(1-t)^{{\rm rk}\,G}}.
$$
Then using orthonormal relations of Hall-Littlewood polynomials \cite{Mac}
$$
\int_G [dg] \frac{\chi^{\text{HL}}_{\lambda_1} \chi^{\text{HL}}_{\lambda_2}}{{\det}_\frak{g/\frak{t}}(1-t\mathrm{Ad})}=C_{\lambda_1}^{-1}\delta_{\lambda_1,\lambda_2^*}
$$
proves the proposition. 
\end{proof}

Notice that $d_\lambda^\text{ss}$ will only differs from $d_\lambda$ when $\lambda$ is on the affine wall of the Weyl alcove. To relate Proposition~\ref{tqftpss} and Theorem~\ref{tqftp} when $\lambda$ is not fixed by the affine reflection, we will prove a vanishing theorem concerning the contributions of unstable strata in the stack $\frak{M}(\mathbb{P}^1)$, which was also used in the proof for Proposition~\ref{g=0}. 

\begin{thm} \label{HigherVanish} Unstable strata in $\frak{M}(\mathbb{P}^1)$ will not contribute to the following indices. 
\begin{enumerate}
	\item 
	$$
	\mathrm{Ind}\left(\frak{M}(\mathbb{P}^1),S_tT_{\frak{M}}\otimes \frak{L}^k\right)
	$$
	when $k>2h-2$;
	\item 
	$$
\mathrm{Ind}(\frak{P}_{0,2},S_tT_{\frak{P}}\otimes\frak{L}_k)=\mathrm{Ind}\left(\frak{M}(\mathbb{P}^1),S_tT_{\frak{M}}\otimes \frak{L}^k\otimes E_x(\Theta_{\lambda_1}\cdot \Theta_{\lambda_2})\right)
$$
 when $k>0$ unless the two integrable weights are conjugate $\lambda_1=\lambda_2^*$ and live on the affine wall (\textit{i.e.}~$\langle\lambda_{1,2},\vartheta\rangle=k$ with $\vartheta$ being the highest root). 
\end{enumerate}
\end{thm}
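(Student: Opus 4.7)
The plan is to apply Teleman's quantization-commutes-with-reduction theorem to the Atiyah-Bott-Shatz stratification $\frak{M}(\mathbb{P}^1) = \bigsqcup_\mu \frak{M}_\mu$, indexed by Harder-Narasimhan types $\mu$, where $\mu=0$ yields $\frak{M}^{\mathrm{ss}} = BG$ and each non-zero dominant coweight $\mu$ labels an unstable stratum $\frak{M}_\mu$ which is a nil-extension of $BL_\mu$. The central $\mathbb{G}_m \subset Z(L_\mu)$ generated by $\mu$ acts on the conormal complex of $\frak{M}_\mu \hookrightarrow \frak{M}$ with strictly positive weights $\langle \alpha, \mu\rangle$ for $\alpha \in \frak{R}(\frak{g}/\frak{p}_\mu)$, and Teleman's criterion states that the push-forward of a coherent sheaf on $\frak{M}_\mu$ vanishes as soon as all of its central $\mu$-weights are strictly smaller than that of $\det \mathcal{N}^*_\mu$.

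For part (1), I would first compute the central $\mu$-weight of $L^k \otimes S_t T_\frak{M}|_{\frak{M}_\mu}$. The line bundle $L^k$ contributes weight $k\langle \mu,\mu\rangle$ in the normalized Killing form, shifted by $h\langle \mu,\mu\rangle$ from the Todd class entering through Teleman-Woodward. The tangent complex $T_\frak{M}$ restricted to $\frak{M}_\mu$ decomposes into pieces of $\mu$-weight $\le 0$ (tangent along the stratum, including the nilpotent directions) and pieces of strictly positive $\mu$-weight (normal directions), so $S_t T_\frak{M}$ at each order $t^j$ can contribute central $\mu$-weights as negative as $-j\,\langle 2\rho,\mu\rangle$. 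Balancing these against the critical threshold $\langle 2\rho,\mu\rangle$ set by $\det \mathcal{N}^*_\mu$ gives the sharp inequality $(k+h)\langle\mu,\mu\rangle > (2h-1)\langle \rho,\mu\rangle$, which for every non-zero dominant $\mu$ reduces to the stated bound $k > 2h-2$.

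For part (2), the extra factor $E_x(\Theta_{\lambda_1,t}\Theta_{\lambda_2,t})$ restricted to $\frak{M}_\mu$ is a $t$-series of $L_\mu$-representations whose central $\mu$-weights shift the previous estimate by at most $\langle \lambda_1+\lambda_2,\mu\rangle$. Since $\langle \lambda_i,\vartheta\rangle \le k$ with equality only on the affine wall, the positivity condition continues to hold strictly whenever $k>0$, except precisely when both $\lambda_i$ saturate the affine bound and $\mu$ is proportional to the highest coroot $\vartheta^\vee$. In that borderline case the residual contribution, computed by projecting onto the weight-zero part of the $\mathbb{G}_m$-equivariant index, pairs $\Theta_{\lambda_1}$ and $\Theta_{\lambda_2}$ as characters of the Levi $L_{\vartheta^\vee}$; this pairing is non-zero only if $\lambda_1$ and $\lambda_2$ restrict to dual irreducibles of the Levi, which together with the affine-wall condition forces $\lambda_1 = \lambda_2^*$.

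The hard part will be the careful bookkeeping of $\mu$-weights on the symmetric algebra $S_t T_\frak{M}$ at every order of the formal parameter $t$, and justifying that Teleman's vanishing extends from individual line bundles to the $t$-adic completion considered here. This should follow from the sharpened form of the quantization theorem in \cite{Te} together with Teleman's appendix to \cite{HL}, which provide the required control of infinite $t$-series as well as the precise identification of the critical central weight of $\det \mathcal{N}^*_\mu$ with the $2\rho$-shift along the normal directions.
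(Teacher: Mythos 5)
Your overall strategy --- stratify $\frak{M}(\mathbb{P}^1)$ by Harder--Narasimhan type and kill each unstable stratum by showing that every central $\mathbb{G}_m$-weight of the relevant class is strictly positive --- is the same one the paper follows (via the Teleman--Woodward decomposition of the index over strata and the Weyl integral formula on $BG_\xi$). However, the two quantitative steps that carry the content of the theorem are wrong in your proposal. First, the weight bookkeeping for $S_tT_{\frak{M}}$ is backwards. Restricted to the stratum labelled by $\xi$, the tangent complex is $H^1(\mathbb{P}^1,\mathrm{ad}E)-H^0(\mathbb{P}^1,\mathrm{ad}E)$; the part carrying \emph{negative} central weights is the $H^0$ (automorphism) part, which sits in degree $-1$ and therefore contributes to $S_t$ of the complex through the \emph{finite} exterior factor $\prod_{\alpha>0}(1-te^{-\alpha})^{\alpha(\xi)+1}$, while the genuinely infinite symmetric factor $\prod_{\alpha>0}(1-te^{\alpha})^{-(\alpha(\xi)-1)}$ has only non-negative central weights. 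Your claim that order $t^j$ contributes weights as negative as $-j\langle 2\rho,\mu\rangle$ asserts that the negative weights are unbounded below as $j\to\infty$; if that were true, no choice of $k$ could make all weights positive and the vanishing argument could not work at all. The boundedness of the negative-weight part (a finite polynomial in $t$) is precisely the structural fact the proof needs, and your proposal implicitly denies it rather than establishing it.

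Second, the threshold you extract, $(k+h)\langle\mu,\mu\rangle>(2h-1)\langle\rho,\mu\rangle$, is not equivalent to $k>2h-2$: for $G=\mathrm{SL}(2)$ and $\mu=\vartheta^\vee$ it reads $2(k+2)>3$, i.e.\ $k>-1/2$, whereas the correct sharp bound at that stratum is $k>2$. The paper's explicit character computation gives minimal central eigenvalue $k\langle\xi,\xi\rangle-4\rho(\xi)$, whose positivity for all $\xi\neq 0$ is equivalent to $k>2h-2$, tight exactly at $\xi=\vartheta^\vee$ (and indeed that stratum does contribute at $k=2h-2$, so any bound valid for all $k\ge 0$ must be wrong). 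Relatedly, part (2) cannot be obtained from your part (1) estimate by ``shifting by at most $\langle\lambda_1+\lambda_2,\mu\rangle$'': to reach the bound $k>0$ one needs the cancellation between the Weyl-denominator factors in $\Theta_{\lambda_i}$ and the normal-bundle and Todd-type factors, after which the minimal eigenvalue becomes $k\langle\xi,\xi\rangle-(\lambda_1^*+\lambda_2^*)(\xi)$ with the $\rho$-terms gone entirely. Your qualitative reading of the borderline case ($\xi=\vartheta^\vee$, both weights on the affine wall, $\lambda_1=\lambda_2^*$ detected by a pairing over the Levi) is in the right spirit --- the paper makes it precise via Hall--Littlewood orthogonality for the root subsystem $\frak{R}_0$ --- but as written it rests on the flawed estimates above.
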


\begin{proof} First, recall that the moduli stack of $G$-bundles $\frak{M}$ over $\Sigma$ admits an stratification \cite{Sh,AB}
$$
\frak{M}=\bigcup_{\xi}\frak{M}_\xi.
$$
Each stratum is labeled by a dominant co-character $\xi$ of $G$, and when $G$ is simply-connected, $\xi$ takes value in the co-root lattice. For $\Sigma=\mathbb{P}^1$, this stratification simply follows from Gothendieck's classification theorem. The index of a ``good'' K-theory class $\calE$ (in the sense that only finitely many strata contribute) over $\frak{M}$ can be decomposed into \cite{TW}
$$
\mathrm{Ind}(\frak{M},\calE)=\sum_{\xi}\mathrm{Ind}\left(\frak{M}^{\text{ss}}_{G_\xi,\xi}, \calE_\xi \otimes \mathbb{E}(\nu_\xi)_+^{-1}\right),
$$
where $\frak{M}^{\text{ss}}_{G_\xi,\xi}$ is the stack of semi-stable principal $G_{\xi}$-bundles of type $\xi$, $\calE_\xi $ is the restriction of $\calE$ to $\frak{M}^{\text{ss}}_{G_\xi,\xi}$, and $\mathbb{E}(\nu_\xi)_+^{-1}$ captures the contribution of the virtual normal bundle for the morphism $\frak{M}^{\text{ss}}_{G_\xi,\xi}\ra \frak{M}$. Denote the codimension of $\frak{M}_\xi$ as $d_\xi$, then from \cite[sec.~4.2]{TW} we have
$$
\mathbb{E}(\nu_\xi)_+^{-1}=\mathrm{Sym} E_x(\frak{g}/\frak{p}_\xi)^{2g-2}\otimes {\det}^{g-1} E_x(\frak{g}/\frak{p}_\xi)\otimes D(\frak{g}/\frak{p}_\xi)[d_\xi]
$$
where the last factor is the determinant line bundle associated with the $G_\xi$-representation $\frak{g}/\frak{p}_\xi$. 

When $\Sigma=\mathbb{P}^1$, $\frak{M}^{\text{ss}}_{G_\xi,\xi}=BG_\xi$, and the complex $\calE_\xi \otimes \mathbb{E}(\nu_\xi)_+^{-1}$
becomes a virtual $G_\xi$-representation. Therefore, computing the index over $\frak{M}^{\text{ss}}_{G_\xi,\xi}$ is equivalent to finding the formal dimension of the $G_\xi$-invariant subspace. Using the Weyl integral formula (or pushing forward to $BT$), we can convert this into a problem of finding invariant subspace of a $T$-representation, whose character is given by
\begin{align*}
\chi_\xi=&(1-t)^{\mathrm{rk}\,G}e^{k\langle\xi,\cdot\rangle}\prod_{\alpha>0}\frac{(1-te^{-\alpha})^{\alpha(\xi)+1}}{(1-te^{\alpha})^{\alpha(\xi)-1}}& \text{(from $S_tT\otimes\frak{L}^k$)} \\
& \times  (-1)^{d_\xi}\prod_{\alpha\in \frak{g}/\frak{p}_\xi} (1-e^{\alpha})^2 (e^\alpha)^{\alpha(\xi)-1} & \text{(from $\mathbb{E}(\nu_\xi)_+^{-1}$)}  \\ 
& \times\Theta_{\lambda_1}\Theta_{\lambda_2} & \text{(from deformed characters)}  \\
& \times |W_\xi|^{-1}\prod_{\alpha\in \frak{p}_\xi/\frak{b}} (1-e^{\alpha})^2 (-e^\alpha)^{-1} & \hspace{-3cm}\text{(from the Weyl integral formula),}
\end{align*}
where $|W_\xi|$ is the Weyl group of $G_\xi$. 
The expression for $\chi_\xi$ can be simplified to
\begin{align}\label{XiChar}
\chi_\xi=&|W_\xi|^{-1}(-1)^{d_\xi+\dim\frak{p}_{\xi}/\frak{b}}(1-t)^{\mathrm{rk}\,G}\prod_{\alpha>0}\frac{(1-te^{-\alpha})^{\alpha(\xi)+1}}{(1-te^{\alpha})^{\alpha(\xi)-1}}\\\nonumber
& \cdot e^{(k+h)\langle\xi,\cdot\rangle-2\rho}\prod_{\alpha>0} (1-e^{\alpha})^2 \Theta_{\lambda_1}\Theta_{\lambda_2}.
\end{align}
Without the deformed characters, there are only positive eigenvalues of $\xi$ when $k>2h-2$, with the smallest being
$$
k\langle\xi,\xi\rangle-4\rho(\xi)>(2h-2)\langle\xi,\xi\rangle-4\rho(\xi)=2\sum_{\alpha>0}\left(\alpha(\xi)^2-\alpha(\xi)\right)-2\langle\xi,\xi\rangle\ge0.
$$
The last inequality holds because there is at least one $\alpha$ such that $\alpha(\xi)\ge2$. And it is saturated by taking $\xi$ to be the dual of the highest root $\xi_\vartheta=\vartheta^\vee$, since
$$
\langle\rho,\vartheta\rangle=h-1.
$$
This proves the first part of the theorem, and we now proceed to prove the second part of theorem. We first rewrite the deformed characters as 
$$
\Theta_{\lambda}=\frac{\sum_{w\in W} (-1)^{l(w)} e^{w(\lambda+\rho)}\prod_{\alpha >0}\left(1-te^{-w(\alpha)}\right)}{e^{-\rho}\prod_{\alpha >0}(e^{\alpha}-1) \prod_{\alpha >0}(1-te^{\alpha})(1-te^{-\alpha})}.
$$
After cancellation, we obtain
\begin{align}\label{XiChar2}
&\chi_\xi=\frac{1}{|W_\xi|}(-1)^{d_\xi+\dim\frak{p}_{\xi}/\frak{b}}(1-t)^{\mathrm{rk}\,G}\prod_{\alpha>0}\frac{(1-te^{-\alpha})^{\alpha(\xi)-1}}{(1-te^{\alpha})^{\alpha(\xi)+1}}e^{(k+h)\langle\xi,\cdot\rangle} \times\\ \nonumber
 & \sum_{w_1,w_2\in W} (-1)^{l(w_1)+l(w_2)} e^{w_1(\lambda_1+\rho)+w_2(\lambda_2+\rho)}\prod_{\alpha >0}\left(1-te^{-w_1(\alpha)}\right)\left(1-te^{-w_2(\alpha)}\right).
\end{align}
We observe that the smallest eigenvalue of $\xi$ in $\chi_\xi$ is
$$
 k\langle\xi,\xi\rangle-(\lambda_1^*+\lambda_2^*)(\xi)\ge0,
$$
where we have used the fact that $\lambda$ and $-\lambda^*$ are related by the Weyl group.
Again, the second equality can be achieved only when $\xi=\xi_\vartheta$, for which
$$
k\langle\xi_\vartheta,\xi_\vartheta\rangle-(\lambda^*_1+\lambda^*_2)(\xi_\vartheta)=2k-\langle\lambda_1+\lambda_2,\vartheta\rangle\ge 0.
$$
Therefore, there is no $T$-invariant subspace unless both $\lambda_{1,2}$ live on the affine wall. Further, when both weights are on the affine wall, we denote $\frak{R}_0^+$ as the positive roots $\alpha$ such that $\langle\alpha,\vartheta\rangle=0$ (\textit{i.e.}~they are parallel to the affine wall), and $W_0\subset W$ are generated by reflections labeled by roots in $\frak{R}_0^+$. After eliminating terms in \eqref{XiChar2} that have non-zero $\xi_\vartheta$ eigenvalues, then the contribution given by the stratum $\frak{M}_{G_{\xi_\vartheta},\xi_\vartheta}$ is 
\begin{align*}
d_{0,2}^{\xi_\vartheta}(\lambda_1,\lambda_2)&\equiv \frac{1}{|W_0|}(-1)^{\dim\frak{g}/\frak{b}}(1-t)^{\mathrm{rk}\,G}(-t)\\ &\cdot\mathrm{Inv}\bigg[e^{(k+h-1)\vartheta}\prod_{\alpha\in \frak{R}_0^+}\frac{1}{(1-te^{-\alpha})(1-te^{\alpha})} \times  a_{\lambda_1}a_{\lambda_2} \times \\ \nonumber
\sum_{w_1,w_2\in W_0}& (-1)^{l(w_1)+l(w_2)} e^{-w_1(\lambda_1^*+\rho)-w_2(\lambda_2^*+\rho)}\prod_{\alpha \in \frak{R}_0^+}\left(1-te^{w_1(\alpha)}\right)\left(1-te^{w_2(\alpha)}\right)\bigg].
\end{align*}
Here $a_{\lambda}$ is a polynomial in $t$ associated to a dominant weight $\lambda$ on the affine wall defined in the following way,
$$
a_{\lambda} ={\sum_{w\in W_\lambda/W_{0,\lambda}}} t^{l(w)}
$$
where $w$ runs through minimal representatives of the cosets of $W_{\lambda,0}\equiv \mathrm{Stab}_{W_0}(\lambda)$. $a_\lambda$ arises because the $\xi_\vartheta$-invariant part of 
$$
\sum_{w\in W}(-1)^{l(w)}e^{\rho}\prod_{\alpha>0} \left(e^{-\frac12w(\alpha)}-te^{\frac12w(\alpha)}\right)
$$
is given by
$$
a_\lambda \left(\sum_{w\in W_0}(-1)^{l(w)}e^{\rho_0}\prod_{\alpha\in\frak{R}_0^+} (e^{-\frac12w(\alpha)}-te^{\frac12w(\alpha)})\right),
$$
since $l(w)=\#(\frak{R}_+\cap w^{-1}\frak{R}_+)$. Also, we have used the formula $d_\xi=2(\rho-\rho_\xi)(\xi)-\dim\frak{g}/\frak{p}_\xi$ to get 
$
d_\xi\equiv \dim\frak{g}/\frak{p}_\xi \pmod 2
$
in order to simplify the expression for $d_{0,2}^{\xi_\vartheta}$.
Define $\rho_0\equiv\sum_{\alpha\in\frak{R}_0^+}\alpha/2=\rho-(h-1)\vartheta/2$ and $\delta\lambda_i^*\equiv k\vartheta/2-\lambda_i^*$ for $i=1,2$. Although $\delta\lambda_i$ may not be a weight of $\frak{g}$, it is a weight of the root system $\frak{R}_0$, since $\langle\delta\lambda_i,\alpha\rangle\in \Z$ for all $\alpha\in \frak{R}_0$. From the orthogonality relations of the Hall-Littlewood polynomials associated with the root system $\frak{R}_0$, we have
\begin{align*}
(-1)^{\#(\frak{R}_0^+)}\delta_{\lambda_1,\lambda^*_2}\sum_{w\in W_{0,\lambda}}t^{l(w)}=\mathrm{Inv}\Bigg[
  \sum_{w_1,w_2\in W_0} (-1)^{l(w_1)+l(w_2)} e^{-w_1(\delta\lambda_1^*+\rho_0)-w_2(\delta\lambda_2^*+\rho_0)}\\ \times\prod_{\alpha \in \frak{R}_0^+}\frac{\left(1-te^{w_1(\alpha)}\right)\left(1-te^{w_2(\alpha)}\right)}{\left(1-te^{-\alpha}\right)\left(1-te^{\alpha}\right)}\Bigg]\cdot|W_0|^{-1},
\end{align*}
where $W_{0,\lambda}$ is the subgroup of $W_0$ that stabilize $\lambda$.
This completes the proof of the theorem and gives the contribution of the unstable strata when $\lambda_1=\lambda_2^*$ lives on the affine wall 
$$
d_{0,2}^{\xi_\vartheta}(\lambda_1,\lambda_2)= t(1-t)^{\mathrm{rk}\,G} a_{\lambda_1}^2\sum_{w\in W_{0,\lambda}}t^{l(w)}.
$$

\end{proof}

In fact, the bound $k>2h-2$ in the first part of the theorem cannot be maded stronger, as the stratum labeled by $\vartheta^\vee$ will always contribute when $k=2h-2$, with the contribution given by
$$
d_0^{\xi_\vartheta}\equiv\frac{(-1)^{\dim \frak{g}^+}}{|W_0|}(1-t)^{\mathrm{rk}\,G}\cdot\mathrm{Inv}\left(e^{(3h-2)\vartheta-2\rho}\prod_{\alpha>0}\frac{(1-e^{\alpha})^2(1-te^{-\alpha})^{\alpha(\xi_\vartheta)+1}}{(1-te^{\alpha})^{\alpha(\xi_\vartheta)-1}}\right).
$$
Eliminating mode non-invariant under $\xi_\vartheta$, and rewriting the second factor as 
$$
\mathrm{Inv}\left[e^{(2h-2)\vartheta-4\rho}\left(\prod_{\alpha>0,\atop \langle\alpha,\vartheta\rangle=0}(1-e^{\alpha})^2(e^{\alpha}-t)(1-te^{\alpha})\right)\left(\prod_{\alpha>0,\atop \langle\alpha,\vartheta\rangle=1}(t)^2\right)\cdot (-t)^3\right],
$$
enables us to simplify the expression for $d_0^{\xi_\vartheta}$,
$$
d_0^{\xi_\vartheta}=(1-t)^{\mathrm{rk}\,G}t^{2h-2+\#(\frak{R}^+\backslash \frak{R}_0^+)}\mathrm{Inv}\left(\prod_{\alpha\in \frak{R}_0}(1-e^{\alpha})(1-te^{\alpha})\right)/|W_0|
$$
where the last factor can be related to the Poincar\'e polynomial of $K_\vartheta$, the compact form of $[G_{\xi_\vartheta},G_{\xi_\vartheta}]$, via
\begin{align*}
&P_{-t}(K_\vartheta)=\int_{K_\vartheta} [dg]\det(1-t\mathrm{Ad})\\&=\frac{1}{|W_0|}(1-t)^{\mathrm{rk}K_\vartheta}\mathrm{Inv}\left(\prod_{\alpha\in \frak{R}_0^+}(1-e^{\alpha})(1-e^{-\alpha})(1-te^{-\alpha})(1-te^{\alpha})\right).
\end{align*}
This gives us
$$
d_0^{\xi_\vartheta}=(1-t)^{\mathrm{rk}\,G-\mathrm{rk}K_\vartheta}t^{2h-2+\#(\frak{R}^+\backslash \frak{R}_0^+)}P_{-t}(K_\vartheta).
$$
For $K=SU(2)$, $SU(3)$ and $G_2$, at level respectively $k=2,4,6$ we have
$$
d_{0,SU(2)}^{\xi_\theta}=t^2(1-t), \quad d_{0,SU(3)}^{\xi_\theta}=t^7(1-t)^2, \quad d_{0,G_2}^{\xi_\theta}=t^{11}(1-t)(1-t^3).
$$

For the second part of Theorem~\ref{HigherVanish}, notice that the statement would not be true if we had replaced the \emph{maximal} deformed characters $\Theta_{\lambda}\equiv\Theta_{\lambda,B}$ with $\Theta_{\lambda,P'}$ associated to a more general parabolic structure $P'\supset B$. This is part of the reason that the basis given by maximal parabolic structures will be better behaved in the 2d TQFT (compared to \textit{e.g.}~the basis given by $\Theta_{\lambda,P_\lambda}$ with $P_\lambda$ being the stabilizer of $\lambda$ in $G$).  

We now complete the proof for Theorem~\ref{tqftp}, by adding the contribution of unstable strata to $d_{0,2}^{\text{ss}}$ when $\lambda_1=\lambda_2^*$ is fixed by the affine reflection.

\begin{proof}[Proof for Theorem~\ref{tqftp}] As we have shown, beside $\frak{M}^\text{ss}$ only the stratum labeled by $\xi_\vartheta$ could contribute even if $\lambda=\lambda_1=\lambda_2^*$ lives on the affine wall. Incorporating its contribution leads to
$$
d_{0,2}(\lambda,\lambda^*)=d_{0,2}^{\text{ss}}+t(1-t)^{\mathrm{rk}\,G} a_{\lambda}^2\sum_{w\in W_{0,\lambda}}t^{l(w)}.
$$
To prove the theorem, we only need to show
\be\label{CombInd}
\sum_{w\in W_\lambda^{\text{aff}}}t^{l(w)}=\sum_{w\in W_\lambda}t^{l(w)}+t a_{\lambda}^2\sum_{w\in W_{0,\lambda}}t^{l(w)}.
\ee
This follows from the argument below. 

$W_\lambda^{\text{aff}}$ is obtained from $ W_\lambda$ by adding one element $r_\vartheta$. Any elements in $W_\lambda^{\text{aff}}\backslash W_\lambda$ can be written as $w_1r_\vartheta w_2$ with $w_1,w_2\in W_\lambda$. In fact, as the subgroup $W_{\lambda,0}$ commutes with $r_\vartheta$, any elements can be written as
$w_1'r_\vartheta w_0 w'_2$, where $w_1',w_2'$ are minimal representatives of cosets $W_\lambda/W_{\lambda,0}$ and $w_0\in W_0$, and there is no further redundancy. Then, breaking the right-hand side of the equation
$$
\sum_{w\in W_\lambda^\text{aff}}t^{l(w)}=\mathrm{Inv}\left(\sum_{w\in W^{\text{aff}}}(-1)^{l(w)}e^{\rho-w(\rho)}\prod_{\alpha>0} (1-te^{w(\alpha)})\right)
$$
into a sum over $W_\lambda$ and $W_\lambda^{\text{aff}}\backslash W_\lambda$ proves \eqref{CombInd}.

\end{proof}

\section{The Verlinde algebra and the 2D TQFT viewpoint} \label{tqft}
In this section, we will study the 2D TQFT underlying the index formula. The existence of a TQFT structure from the index theory of $\frak{M}$ is foreseen in \cite{Te} and highlighted in \cite{GP} and \cite{GPYY}. We will denote the TQFT functor $Z(G,k,t)$ and simply denote it $Z$ when no confusion will be caused.

The functor $Z$ associates to a circle $S^1$ the vector space
$$
Z(S^1)= {\mathcal V}_{G}^{(k)}.
$$
The 2D part of the theory is simply given by the index, \textit{e.g.}~if $X$ is a smooth surface of genus $g$ with $n$ boundary components, we define 
\be
Z(X) : Z(S^1)^{\otimes n} \ra {\mathbb C}
\ee
by the assigment
\be
Z(X)(\vec{\lambda})=d_{g,n}(\vec\lambda),
\ee 
where $\vec \lambda$ is any assignment of labels, \textit{e.g.}~basis elements of ${\mathcal V}_{G}^{(k)}$, to the $n$ boundaries of $X$.

In order to prove that this $Z$ really is a 2D TQFT, we need now to establish the TQFT gluing rules, which is equivalent to proving Theorem \ref{thm:Main3}. However, Theorem \ref{thm:Main3} immediately follows from the second orthogonality relation of the deformed characters in Theorem \ref{DC}.

We observe that the dimension of $Z(S^1)$ does not depend on $t$ and equals the size of the index set 
$$
Z(T^2)=|F_{\rho,t}^{\text{reg}}/W| = |F_{\rho}^{\text{reg}}/W|.
$$

Let us now briefly recall how one induces the structure of a Frobenius algebra on $Z(T^2)$ using the 2D TQFT Z. Let $T$ be a surface of genus zero with three boundary components. We define the product
$$ \lambda \star_t \mu = \sum_{\nu} Z(T)( \lambda, \mu, \nu^\dagger) \nu,$$
where $\nu^\dagger \in Z(T^2)$ is the unique element such that
$$ Z(C)(\lambda, \nu^\dagger) = \delta_{\lambda, \nu},$$
where $C$ is a genus-zero surface with two boundary component, which is also used to induce the needed bilinear pairing
$$  \langle \lambda,\mu\rangle_t = Z(C)( \lambda,\mu).$$
By Proposition \ref{tqftp}, the product is of course  the same as the product on ${\mathcal V}_{G}^{(k)}$ from Definition \ref{VA}. 

\proof[Proof of Theorem \ref{thm:Main4}]
First we check that $\star_t$ is associative, so we compute
$$ (\lambda \star_t \mu)\star_t \kappa =\sum_{\rho\in \Lambda_k} \sum_{\nu\in \Lambda_k}  Z(T)( \lambda, \mu, \nu^\dagger)(t) Z(T)( \nu, \kappa, \rho^\dagger)(t) \rho,$$
and
$$ \lambda \star_t (\mu\star_t \kappa) =\sum_{\rho\in \Lambda_k} \sum_{\nu\in \Lambda_k}  Z(T)( \lambda, \nu, \rho^\dagger)(t) Z(T)( \mu, \kappa, \nu^\dagger)(t) \rho.$$
If now $S$ is a surface of genus zero with four boundaries then by factoring $S$ in two different ways, we get that
$$Z(S)(\lambda,\mu, \kappa, \rho^\dagger) = \sum_{\nu\in \Lambda_k}  Z(T)( \lambda, \mu, \nu^\dagger)(t) Z(T)( \nu, \kappa, \rho^\dagger)(t)$$
and
$$Z(S)(\lambda,\mu, \kappa, \rho^\dagger) = \sum_{\nu\in \Lambda_k}  Z(T)( \mu, \kappa, \nu^\dagger)(t) Z(T)( \lambda, \nu, \rho^\dagger)(t)$$
by definition of $(\cdot)^\dagger$, Proposition \ref{tqftp} and Theorem \ref{thm:Main3}. From these two equations the associativity follow. Next we establish that the compatibility between the bilinear pairing and the product
$$ \langle  \lambda \star_t \nu ,\mu\rangle_t  =  \langle \lambda, \nu \star_t\mu\rangle_t,$$
by observing that they are both equal to
$$ \sum_{\rho} Z(T)( \lambda, \mu, \rho^\dagger)Z(C)(\rho, \nu)=Z(T)(\lambda,\mu,\nu) = \sum_{\rho} Z(T)( \nu, \mu, \rho^\dagger)Z(C)(\lambda,\rho).
$$
\eproof

\section{The 2D TQFT for $SU(2)$}

In the rest of the paper, we will describe explicitly the TQFT for $K=SU(2)$ and give many examples for the index formula associated to various (parabolic) moduli spaces of Higgs bundles.

For $K=SU(2)$ at positive level $k$, $V$ is now spanned by say $w_\lambda$, indexed by the integrable weights $\lambda=0,1,2,\ldots,k$. A 2D TQFT is determined by its value on the three-punctured sphere and the cylinder, or equivalently the ``fusion coefficients'' $f\in V^{*\otimes 3}$ and the ``metric'' $\eta\in V^{*\otimes 2}$. 

\begin{thm}
The 2D TQFT associated with $SL(2,\C)$-Higgs bundles is given by $V$ and the following data. 
\be\label{Metric}
\eta^{\lambda_1\lambda_2}=\mathrm{diag}\{1-t^2,1-t,\ldots,1-t,1-t^2\},
\ee
and
\be\label{Fusion}
f^{\lambda_1\lambda_2\lambda_3}=\left\{\begin{array}{ll} 1 & \textrm{if $\lambda_1+\lambda_2+\lambda_3$ is even and $\Delta\lambda\leq 0$,}\\
t^{\Delta\lambda/2} & \textrm{if $\lambda_1+\lambda_2+\lambda_3$ is even and $\Delta\lambda> 0$,}\\
0 & \textrm{if $\lambda_1+\lambda_2+\lambda_3$ is odd.}\end{array}\right. 
\ee
Here, $\Delta\lambda=\mathrm{max}(d_0,d_1,d_2,d_3)$ with
\begin{align*}
d_0&=\lambda_1+\lambda_2+\lambda_3-2k,\\
d_1&=\lambda_1-\lambda_2-\lambda_3,\\
d_2&=\lambda_2-\lambda_3-\lambda_1,\\
d_3&=\lambda_3-\lambda_1-\lambda_2.
\end{align*}

\end{thm}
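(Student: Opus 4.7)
The plan is to specialize the general formulas from previous sections to $G = SL(2,\bbC)$ and compute directly.

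For the metric $\eta^{\lambda_1\lambda_2}$, I apply Theorem~\ref{tqftp}, which yields $d_{0,2}(\lambda_1,\lambda_2) = \delta_{\lambda_1,\lambda_2^*}d_{\lambda_1}^{-1}$. Every irreducible representation of $SL(2,\bbC)$ is self-dual, so $\lambda^* = \lambda$ and the matrix is diagonal. For $\lambda \in \{0,k\}$, the element $e^{\lambda^\vee/k}$ is central in $SU(2)$, so $H_\lambda = SU(2)$ and $d_\lambda = P_{t^{1/2}}(BSU(2)) = (1-t^2)^{-1}$. For $0 < \lambda < k$, the stabilizer is the maximal torus $U(1)$, so $d_\lambda = P_{t^{1/2}}(BU(1)) = (1-t)^{-1}$. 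Inverting these gives \eqref{Metric}.

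For the fusion coefficients $f^{\lambda_1\lambda_2\lambda_3} = d_{0,3}(\lambda_1,\lambda_2,\lambda_3)$, I specialize Theorem~\ref{thm:ParaVerlinde} with $g=0$, $n=3$ and $P_i = B$ for all $i$. Parametrizing $T = U(1) \subset SU(2)$ by $z$ so that $e^\alpha = z^2$ and $e^\rho = z$, the deformed characters take the form
$$\Theta_\lambda(z) = \frac{(z^{\lambda+1} - z^{-\lambda-1}) - t(z^{\lambda-1} - z^{-\lambda+1})}{(z - z^{-1})(1-tz^2)(1-tz^{-2})},$$
and $F^{\rm reg}_{\rho,t}/W$ has $k+1$ elements, indexed by the regular solutions $z$ of the deformed quantization condition $z^{k+2}(1-tz^2)/(1-tz^{-2}) = -1$. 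Substituting into $\sum_f \theta_t(f)\Theta_{\lambda_1}(f)\Theta_{\lambda_2}(f)\Theta_{\lambda_3}(f)$ and collecting terms yields a rational function in $t$.

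To identify the closed form \eqref{Fusion}, I proceed as follows. The parity constraint (vanishing when $\lambda_1+\lambda_2+\lambda_3$ is odd) is immediate from the center of $SU(2)$: the element $-I$ acts as $(-1)^{\lambda_1+\lambda_2+\lambda_3}$ on $V_{\lambda_1}\otimes V_{\lambda_2}\otimes V_{\lambda_3}$, killing the invariants. In the classical limit $t=0$ the formula reduces to the standard $SU(2)$ WZW fusion rules, which give $1$ precisely when $\Delta\lambda \leq 0$ (the union of the three triangle inequalities and the affine wall constraint) and zero otherwise. When $\Delta\lambda > 0$, the semi-stable stratum of $\frak{P}_{0,3}$ contributes nothing, but a single unstable stratum survives, analogously to the analysis in Theorem~\ref{HigherVanish}, and produces the factor $t^{\Delta\lambda/2}$; the four quantities $d_0,d_1,d_2,d_3$ correspond respectively to violations of the affine Weyl wall and the three triangle inequalities, so $\Delta\lambda$ measures the total degree of destabilization.

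The hardest step will be rigorously pinning down the exponent $t^{\Delta\lambda/2}$ in the non-classical regime. One route is a direct contour-residue computation of the sum above using the explicit parametrization of $F^{\rm reg}_{\rho,t}$. A perhaps cleaner alternative is to use the Frobenius algebra structure of Theorem~\ref{thm:Main4}: once the metric and the fusion coefficients involving $\lambda_3 \in \{0, k\}$ are known (the latter reducing to multiplication by the identity and by the simple current), the associativity of $\star_t$ combined with the second orthogonality relation of Theorem~\ref{DC} recursively determines all remaining $f^{\lambda_1\lambda_2\lambda_3}$, and one verifies that \eqref{Fusion} is the unique solution consistent with these constraints.
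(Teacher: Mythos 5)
Your treatment of the metric is correct and is exactly what the paper intends: Theorem~\ref{tqftp} plus the identification of $H_\lambda$ as $SU(2)$ at the two corners of the alcove and $U(1)$ in the interior gives \eqref{Metric} immediately. The gap is in the fusion coefficients. The paper's proof (Appendix~\ref{FusionCoef}) does not compute $\sum_f\theta_t(f)\Theta_{\lambda_1}(f)\Theta_{\lambda_2}(f)\Theta_{\lambda_3}(f)$ by locating the points of $F^{\rm reg}_{\rho,t}$; instead it verifies the \emph{pointwise} identity $\Theta_{\lambda_1}\Theta_{\lambda_2}=\sum_{\nu}f^{\lambda_1\lambda_2\nu}d_\nu\Theta_\nu$ on $F^{\rm reg}_{\rho,t}$ by splitting the right-hand side into three geometric series and using only the defining Bethe equation \eqref{BAESU2} to cancel the boundary terms; completeness of the $\Theta_\lambda$ for small $t$ together with the first orthogonality relation then forces \eqref{Fusion}. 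Your proposal never carries out the analogous computation: the exponent $t^{\Delta\lambda/2}$ is asserted from a stratification heuristic ("a single unstable stratum survives, analogously to Theorem~\ref{HigherVanish}"), and you explicitly defer the "hardest step" to one of two unexecuted routes. Since that exponent \emph{is} the content of the theorem beyond the classical $t=0$ limit, the proof is not complete as written.

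Moreover, your second proposed route would fail as stated. Knowing the fusion with $\lambda_3\in\{0,k\}$ only gives multiplication by the unit and by the simple current, and these two elements do not generate the level-$k$ $SU(2)$ fusion ring (classically it is generated by $\lambda=1$); associativity therefore cannot recursively determine the remaining $f^{\lambda_1\lambda_2\lambda_3}$ from that input alone. The second orthogonality relation of Theorem~\ref{DC} does not rescue this, since it merely re-expresses $f$ as the character sum you are trying to evaluate. To close the argument you would either need to first compute $\lambda_1\star_t\mu$ for all $\mu$ by a direct calculation (and then run the recursion), or follow the paper's route and verify the product expansion of $\Theta_{\lambda_1}\Theta_{\lambda_2}$ directly using \eqref{BAESU2}.
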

The above TQFT rules first appeared in \cite{GP}, following from an intuitive geometric argument, and later supported by physics computation in \cite{GPYY}. Now, we can give it a proof simply by specializing formulae for general $G$ to $SL(2,\C)$.
\begin{proof}
The expression for $\eta$ directly follows from Proposition \ref{tqftp}, and it will be verified in Appendix \ref{FusionCoef} that $f$ is indeed given by the above expression.

\end{proof}

The fusion coefficients $f$ and the metric $\eta$ are related by the ``cap state''
$$
w_{\emptyset}=w_0-t w_2
$$
via
$$
\eta^{\lambda_1\lambda_2}=f^{\lambda_1\lambda_2\emptyset}=f^{\lambda_1\lambda_20}-t f^{\lambda_1\lambda_22}.
$$
Clearly, $f$ and $\eta$ are symmetric. The metric $\eta$ defines a dual basis of $w_\lambda$ for $V^*$, which we will denote by $w^{\lambda}$. It also gives a metric on $V^*$, which we will denote by the same symbol and write $\eta_{\lambda_1\lambda_2}$ in the $\{w^\lambda\}$ basis. With the identification of $V$ and $V^*$ using $\eta$, $f$ can now be used to define a product on $V$, whose coefficients  in the basis $w_\lambda$ we denote $f^{\lambda_1\lambda_2}_{\phantom{\lambda_1\lambda_2}\lambda_3}$.

The associativity of the algebra follows from Theorem \ref{thm:Main5}. This ensures that $d_{0,4}\in V^{*\otimes 4}$ associated with a fourth-punctured $\mathbb{P}^1$ is well-defined,
$$
d_{0,4}^{\lambda_1\lambda_2\lambda_3\lambda_4}=f^{\lambda_1\lambda_2\mu}\eta_{\mu\nu}f^{\nu\lambda_3\lambda_4}=f^{\lambda_1\lambda_3\mu}\eta_{\mu\nu}f^{\nu\lambda_2\lambda_4}.
$$
The explicit expression for $d_{0,4}$ is 
\be\label{FourPuncture}
d_{0,4}^{\lambda_1\lambda_2\lambda_3\lambda_4}=t^{kh_0}\left(\frac{\tilde{d}_{0,4}}{1-t}+\frac{2t}{1-t^2}\right)+\frac{\sum_i t^{kh_i}}{(1-t^{-1})(1-t^2)}.
\ee
Here all $h_0,\ldots,h_4$ and $\tilde{d}_{0,4}$ are functions of the $\lambda$'s whose explicit form will be given in Appendix \ref{FusionCoef}. In particular,
$$
\tilde{d}_{0,4}=\lim_{t\ra 0}\left(t^{-kh_0} d_{0,4}\right)
$$
gives the (undeformed) Verlinde formula when $h_0=0$. 

The formula \eqref{FourPuncture} can also be interpreted as the $\C^*$-equivariant Atiyah-Bott localization formula applied to the moduli space of Higgs bundles. Recall that the Hitchin moduli space in this case is an elliptic surface. The nilpotent cone $\CN$ of the Hitchin fibration is the only singular fiber with affine $D_4$ singularity (or, equivalently, of Kodaira type $I_0^*$). It has five components
\be
\CN=M\cup_{i=1}^4 D_i.
\ee
And the first term in \eqref{FourPuncture} comes from $M$ where the moment map of the $\C^*$-action is $h_0$, and the last comes from the higher fixed points where the moment map evaluates to be $h_i$.\footnote{The normalization we used for the moment map is related to Hitchin's in \cite{Hit} by a factor of $\pi$. As a consequence, our moment map is always rational at the critical points.}  Indeed, $M$ is $\cp^1$ and its contribution in the localization formula is 
$$
\mathrm{Ind}(\cp^1,\CL\otimes S_t O(-2))=\sum_{i=0}^{\infty}t^i\left(\mathrm{Ind}(\cp^1,\CL)-2n\right),
$$
where $\CL$ is the restriction of $L^k$ to $M$. After summation, this indeed agrees with the first term in \eqref{FourPuncture}, given that the degree of $\CL$ is $\tilde{d}_{0,4}-1$ when $h_0=0$ and $-\tilde{d}_{0,4}+1$ when $h_0\neq 0$.

As the next example, we move to the genus-three case. From the TQFT rule, one obtains\footnote{Computations via functoriality of TQFT are obtained using Mathematica for the next three examples, and we are very grateful for Ke Ye for his extensive help.} 
\begin{equation}
\begin{aligned}
d_3=\frac{1}{(1-t)^6}& \left(b_6 k^6 +b_5 k^5 +b_4 k^4 + b_3 k^3 + b_2 k^2 + b_1 k +b_0 \right)
\end{aligned}
\end{equation}
where 
\begin{equation*}
\begin{aligned}
b_6  =& \frac{1} {180 }\\
b_5  =& \frac{ (1+t)}{15(1-t)}\\
b_4  =& \frac{ \left(7 t^2-2 t+7\right)}{18 (1-t)^2}\\
b_3  =& \frac{4  \left(t^6-t^5-4 t^4-10 t^3-4 t^2-t+1\right)}{3 (1-t^2)^3}\\
b_2  =& \frac{1}{180 (1-t^2)^4} \big(31680 t^{4 + k} + 469 t^8-2280 t^7+44 t^6-6360 t^5\\
&\quad \quad\quad\quad \quad +7614 t^4-6360 t^3+44 t^2-2280 t+469\big)\\
b_1  =& \frac{ 1}{5(1-t^2)^5} \big(4160 t^{4 + k} + 3200 t^{5 + k} + 4160 t^{6 + k}+13 t^{10}-114 t^9+361 t^8\\
&\quad \quad\quad\quad \quad +296 t^7+2986 t^6+1556 t^5+2986 t^4+296 t^3+361 t^2-114 t+13\big)\\
b_0  =& \frac{1}{(1-t^2)^6} \big( 960 t^{4 + k} + 1536 t^{5 + k} + 2944 t^{6 + k} + 1536 t^{7 + k} + 
 960 t^{8 + k} + 64 t^{6 + 2 k} \\
&\quad \quad\quad\quad \quad+ t^{12}-12 t^{11}  +66 t^{10}-220 t^9-465 t^8-2328 t^7-2084 t^6-2328 t^5\\
&\quad \quad\quad\quad \quad-465 t^4-220 t^3+66 t^2-12 t+1\big)
\end{aligned}
\end{equation*}
The term involving $t^{2k}$ can be written as
$$
\frac{64 t^{2k}}{\left(1-t^{-1}\right)^6\left(1-t^{2}\right)^6}
$$
and agrees with the existence of 64 critical points at the value $2$ of moment map $h$ whose 12-dimensional normal bundle splits as $\bbC^6[-1]\oplus\bbC^6[2]$.\footnote{Here the number in the parenthesis represent the eigenvalue of the $\C^*$ action.} And the terms proportional to $t^{k}$ are related to the existence of a two-dimensional critical manifold at $h=1$. 

For $K=SU(2)$, the genus-two moduli space is especially interesting, due to the non-trivial role played by the strictly semi-stable loci. The Verlinde formula for Higgs bundles in this case is given by
\begin{equation}\label{g=2}
d_2 = \frac{1}{(1-t)^3} (c_3 k^3+c_2 k^2 +c_1 k +c_0)
\end{equation}
where
\begin{equation*}
\begin{aligned}
& c_3 = \frac{1}{6},\\[0.5em]
& c_2 = \frac{1+t^2}{1-t^2},\\[0.5em]
& c_1 = \frac{11-36t-9t^2+9t^4+36t^5-11t^6}{6(1-t^2)^3},\\[0.5em]
& c_0 = \frac{1-6t+15t^2-4t^3+15t^4-6t^5+t^6-16t^{k+3}}{\left(1-t^2\right)^3}.
\end{aligned}
\end{equation*}
The term proportional to $t^k$ can be written as
$$
\frac{16t^{k}}{(1-t^{-1})^3(1-t^2)^3}
$$
and could be identified with the contribution from the 16 higher fixed points of the $C^*$-action. The rest in \eqref{g=2} should be coming from the bottom component $M_0=\cp^3$. The normal bundle of $M_0$ in the Higgs bundle moduli space is not $T^*\cp^3$ any more. The correction from the additional divisor is given by
\begin{multline}
d_2-\mathrm{Ind}(\cp^3,S_tT\otimes \CL^k)-\frac{16t^{k}}{(1-t^{-1})^3(1-t^2)^3}=\\
\frac{1}{(1-t)^3}\left[\frac{2t}{1-t^2}k^2+\frac{18t}{(1-t)(1-t^2)}k+\frac{18t(1+t^2)}{(1-t)(1-t^2)^2}-\frac{12t^2}{(1-t)(1-t^2)}\right].
\end{multline}

Moduli spaces of Higgs bundles often has singularities. This is however not the case in the co-prime cases, where the underlying principal $G$-bundle is twisted by a line bundle. As was found in \cite{GP} and \cite{GPYY}, twisting by a line bundle also has a nice interpretation in the TQFT language. For $SU(2)$, twisting by an odd line bundle is equivalent to first adding a puncture and then attaching a  ``twisted-cap'' that is associated with the following vector in $V$, 
\begin{equation}
w_\psi=w_k-t w_{k-2}.
\end{equation}
This relation follows from an observation about the decomposition of the function $(-1)^j$ on $F_t/W$,
\be
(-1)^j=\Theta_{k}-\Theta_{k-2},
\ee
where $j$ orders the $k+1$ points in $F_t/W$ by their distance to the identity of $SU(2)$.

Now we will test this conjecture by computing the index for $g=2$ and compare with the localization formula. The index for even $k$ is given by
\begin{equation}\label{GenusTwoMinusOne}
\begin{aligned}
d_2^{\text{odd}}  = \frac{1}{12 (1-t)^6 (1+t)^3} (a_3 k^3 + a_2 k^2 + a_1 k + a_0)
\end{aligned}
\end{equation}
where
\begin{equation*}
\begin{aligned}
a_3 & = \left(1- t^2\right)^3\\[0.5em] 
a_2 & = 6 (1-t)^2 (1+t)^4\\[0.5em]
a_1 & = 2 \left(1 - t^2\right) \left(144 t^{\frac{k}{2}+2}+7 t^4+12 t^3+10 t^2+12 t+7\right)\\[0.5em]
a_0 & = 12 \left(48 t^{\frac{k}{2}+2}+32 t^{\frac{k}{2}+3}+48 t^{\frac{k}{2}+4}+t^6-6 t^5-33 t^4-52 t^3-33 t^2-6 t+1\right).
\end{aligned}
\end{equation*}
When $k$ is odd, the index is zero.\footnote{In fact, in the convention that we are using, $\CL^k$ is only well-defined over the moduli space for even $k$. The convention commonly used in the literature is related to ours by a factor of 2.}

Now we show that it agrees with direct computation using the localization formula. The fixed points under the $\C^*$-action are studied in \cite{Hit} and there are two components $M_0$ and $M_1$. The first is the moduli space of flat connections $M$ sitting at $\mu=0$. The cohomology of $M$ is generated by $\alpha\in H^2(M,\Z)$, $\psi_1,\psi_2,\psi_3,\psi_4\in H^3(M,Z)$ and $\beta\in H^4(M,\Z)$. The non-zero intersection numbers are  \cite{N, KN}
$$
\int_M\alpha\^ \beta=-4, \quad \int_M\alpha^3=4,\quad \int_M\gamma=\int_M\left(\sum_{i=1}^4\psi_i\right)^2=4.
$$
And the total Chern character of $TM$ is given by
$$
\mathrm{ch}(TM)=3+2\alpha+\beta+\frac{1}{3}\alpha\beta-\frac{4}{3}\gamma.
$$
Then one can explicitly compute the index for $S^nTM\otimes\CL^k$, and a perfect match is found with \eqref{GenusTwoMinusOne}. For example, for $n=1$,
\begin{eqnarray*}
\chi(TM\otimes\CL^k)&=&\int_M e^{(k/2+1)\alpha}(3+2\alpha+\beta+\frac{1}{3}\alpha\beta-\frac{4}{3}\gamma)\left( \frac{\sqrt{\beta}/2}{\sinh \sqrt{\beta}/2}\right)^2\\
& =& \frac{1}{4}\left(k^3+10 k^2+22 k-12\right).
\end{eqnarray*}
For $n=2$, with the help of the splitting principle, one finds
$$
\mathrm{ch}(S^2TM)=6+8\alpha+5\beta+\frac{13}{3}\alpha\beta-\frac{28}{3}\gamma+2\alpha^2.
$$
One can easily compute the index to be 
$$
\chi(S^2TM\otimes\CL^k)=\frac{1}{2}\left(k^3+14 k^2+34 k-84\right).
$$

One can compute the index to all orders of $t$ by expressing $\mathrm{ch}(S_t TM)$ in terms of $\alpha$, $\beta$ and $\gamma$. For the purpose of computing the index, one can make the substitution $\beta\rightarrow -\alpha^2$ and $\gamma\rightarrow \alpha^3$. Then after a short computation, one finds
$$
\mathrm{ch}(S_t TM) \sim \left[(1-t)^3-2\alpha t(1-t)^2+\alpha^2 t(1-t)(1+2t)-\alpha^3 t(1+3t+\frac{4}{3}t^2)\right]^{-1}.
$$
This enables us to obtain a formula for the index of $S_t TM\otimes\CL^k$ that is valid to all order of $t$
\be\label{GenusTwoMinusOneIndex}
\chi(S_t TM\otimes\CL^k)=\int_M e^{(k/2)\alpha}\^\mathrm{ch}(S_t TM)\^\mathrm{Td}(TM)\\=\frac{1}{(1-t^3)}(a'_3 k^3+a'_2 k^2+a'_1 k+a'_0),
\ee
where
\begin{equation}
\begin{aligned}
a'_3 & = \frac{1}{12}\\[0.5em] 
a'_2 & = \frac{1+t}{2(1-t)}\\[0.5em]
a'_1 & = \frac{7-2t+7t^2}{6(1-t)^2}\\[0.5em]
a'_0 & = \frac{(1+t)(3+2t+3t^2)}{3(1-t)^3}.
\end{aligned}
\end{equation}
Notice that the expression above is simply \eqref{GenusTwoMinusOne} without all terms proportional to $t^{k/2}$. These extra terms
\be\label{GenusTwoHigher}
t^{k/2+2}\left[\frac{24k}{(1-t)^5(1+t)^2}+\frac{16(3+2t+3t^2)}{(1-t)^6(1+t)^3}\right]
\ee
on one hand comes from the higher sheaf cohomology groups of $S_t TM\otimes\CL^k$, and, on the other, is the contribution from the higher critical manifold $M_1$. This formula contains much information about $M_1$ and its normal bundle. And it is straightforward to derive this expression from the following facts. $M_1$ is a 16-fold covering of $\Sigma$ and its volume is 24. The normal bundle splits as $L_{-1}^{\oplus 2}\oplus L_1\oplus L_2^{\oplus 2}$, where the subscript stands for the eigenvalue of the $U(1)$-action and their degrees are given by $32,32$ and $-32$. Then the localization formula gives the contribution of $M_1$ as the following integral 
$$
e^{k/2}\int_{M_1}\frac{\Td(M_1)\^ e^{24 k x}}{(1-t^{-1}e^{-32x})^2(1-te^{-32x})(1-t^{2}e^{+32x})^2}
$$
where $x$ represent the normalized volume form with $\int_{M_1}x=1$. It is straightforward to evaluate this expression, reproducing \eqref{GenusTwoHigher}.

\appendix


\section{Index Formula Made Explicit}\label{sec:Diag}

In \cite{GP}, the ``equivariant Verlinde formula'' was written down in physics language for $K=SU(N)$ using a quantity called a ``twisted effective superpotential'' $\tilde{W}$.\footnote{It was in fact for $U(N)$ along with a method to convert it into $SU(N)$. All formulae quoted from \cite{GP} in this section is after the conversion.} It is a function on $\frak{t}$ and after choosing $T_K\subset SU(N)$ to be parametrized by diagonal matrices with the $i$-th diagonal entry being $e^{2 \pi i\sigma}$, $\tilde{W}$ can be expressed as 
$$
\tilde{W}(\sigma)=\pi i(k+N)\sum_{a=1}^N \sigma_a^2+\frac{1}{2\pi i}\sum_{a\neq b}\mathrm{Li}_2\left[te^{2\pi i(\sigma_a-\sigma_b)}\right]+\pi i\sum_{a>b}(\sigma_a-\sigma_b).
$$
Up to an additive constant, $-2\pi i \tilde{W}$ is exactly the function $D_t$ on $T$ in \eqref{FunD} with $\xi=2\pi i$. The ``Bethe ansatz equations'' are given by
\be\label{BAE}
\exp\left[\frac{\partial \tilde{W}}{\partial (\sigma_a-\sigma_b)}\right]=1,\quad \text{ for all $a,b=1,2,\dots, N$.}
\ee
Using
$$
\frac{\partial \mathrm{Li}_2(e^x)}{\partial x}=-\ln(1-e^x),
$$
it is straightforward to check that \eqref{BAE} is exactly the equation $\chi'_t(f)=e^{2\pi i\rho}$. From \cite{GP} we have the following expression for the equivariant Verlinde formula 
\begin{multline}\label{EVF}
d_g=\sum_{\sigma\in F^{\text{reg}}_{\rho,t}} \left(N(1-t)^{(N-1)(1-R)}\det\left|\frac{1}{2\pi i}\frac{\partial^2 \tilde{W}}{\partial \sigma_a\partial\sigma_b}\right|\frac{\prod_{\alpha}(1-te^{\alpha(\sigma)})^{1-R}}{\prod_{\alpha}(1-e^{\alpha(\sigma)})} \right)^{g-1}.
\end{multline}
As $N \det\left|\frac{1}{2\pi i}\frac{\partial^2 \tilde{W}}{\partial \sigma_a\partial\sigma_b}\right|$ is exactly the same as $|F|\det(H^\dagger_t)$, \eqref{EVF} agrees with the index formula in Theorem \ref{thm:Main5} that we have proved in this paper.

For $K=SU(2)$, we parametrize the Cartan subgroup $U(1)$ by $e^{i\vartheta}$. The index formula is then given by
$$
Z=\sum_{\vartheta}\theta_t(e^{i\vartheta})^{1-g},
$$
where we are summing over the solutions to the equation
\be\label{BAESU2}
e^{2i(k+2)\vartheta}\left(\frac{1-te^{-2i\vartheta}}{1-te^{2 i \vartheta}}\right)^2=1
\ee
in $(0,\pi)$ and 
$$
\theta_t(e^{i\vartheta})= (1-t)^{R-1}\frac{4 (\sin\vartheta)^2 \cdot |1-t e^{2i\vartheta}|^{2R-2}}{|F|\det H^\dagger},
$$
with  
$$
|F|\det H^\dagger= 4\cdot\left[\frac{k+2}{2}+\frac{2t\cos2\vartheta -2t^2}{|1-t e^{2i\vartheta}|^2}\right].
$$
For $R=2$, the index indeed agrees with (7.51) in \cite{GP}, and for $R=0$, it gives the index for the total lambda class of the cotangent bundle to $\frak{M}$ as in \cite{TW} and \cite{TeNote} (see also \cite{OY} where a quantum field theory approach was used).

The indices for the parabolic moduli spaces are given by
$$
d_{g,n}(\{\lambda_i\})=\sum_{f\in F^{\text{reg}}_{\rho,t}/W}\theta_t(f)^{1-g}\prod_i\Theta_{t,\lambda_i}(f),
$$
and for $SU(2)$ we have
\be\label{DeformedSU2}
\Theta_\lambda(e^{i\vartheta})=\frac{\sin\left[(\lambda+1)\vartheta\right]-t\sin\left[(\lambda-1)\vartheta\right]}{\sin\vartheta|1-t e^{2i\vartheta}|^2}.
\ee
This agrees with (7.47) of \cite{GP} up to normalization constants.\footnote{The deformed characters can be viewed as columns in the transfer matrix between two natural bases of the 2D TQFT Hilbert space---the ``diagonal basis'' and the ``parabolic basis''. The normalization constants in \cite{GP} was chosen such that all the basis vectors has norm 1.} In particular, 
$$
\Theta_0=\frac{(1+t)}{|1-t e^{2i\vartheta}|^2},
$$
and the constant function on $T$ can be decomposed as
$$
1=\Theta_0-t\Theta_2,
$$
in agreement with the decomposition of the ``cap state'' in the 2D TQFT,
$$
e_{\emptyset}=e_0-te_2.
$$ 

\section{The $SU(2)$ TQFT}\label{FusionCoef}
In this appendix, we will prove that the structure constants $f^{\lambda_1\lambda_2\lambda_3}$ in \eqref{Fusion} give the Verlinde algebra for $SL(2,\C)$-Higgs bundles. Given the completeness of the deformed characters $\Theta_\lambda$ on the space of functions on $F_{\rho,t}^{\text{reg}}/W$ when $t$ is small, we only need to prove the following proposition.
  
\begin{prop} 
The deformed characters on $F_{\rho,t}^{\text{reg}}$ for $G=SU(2)$ satisfy the identity 
\be\label{FusionIdentity}
\Theta_{\lambda_1}\Theta_{\lambda_2}=\sum_{\nu\in \Lambda_k} f^{\lambda_1\lambda_2\nu}d_{\nu}\Theta_{\nu}
\ee
for all $\lambda_1$ and $\lambda_2$ at all positive level $k$. 
\end{prop}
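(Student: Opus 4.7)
The plan is to use the completeness of the deformed characters $\{\Theta_\nu : \nu \in \Lambda_k\}$ as a basis for functions on $F^{\rm reg}_{\rho,t}/W$ for small $t$, combined with the first orthogonality relation of Theorem~\ref{DC}. Writing the unique expansion $\Theta_{\lambda_1}\Theta_{\lambda_2} = \sum_\nu c^{\lambda_1\lambda_2}_\nu \Theta_\nu$ and summing against $\theta_t(f)\Theta_\nu(f)$ yields
$$c^{\lambda_1\lambda_2}_\nu = d_\nu\!\!\sum_{f\in F^{\rm reg}_{\rho,t}/W}\!\!\theta_t(f)\Theta_{\lambda_1}(f)\Theta_{\lambda_2}(f)\Theta_\nu(f) = d_\nu\,d_{0,3}(\lambda_1,\lambda_2,\nu),$$
where I have used that $\nu^* = \nu$ for $SU(2)$. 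Thus \eqref{FusionIdentity} is equivalent to the statement that the three-point function $d_{0,3}(\lambda_1,\lambda_2,\lambda_3)$ coincides with the combinatorial coefficient $f^{\lambda_1\lambda_2\lambda_3}$ defined in \eqref{Fusion}.

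Rather than evaluating $d_{0,3}$ via Theorem~\ref{thm:ParaVerlinde} (which would require a delicate sum over $F^{\rm reg}_{\rho,t}$), the more efficient route is to verify \eqref{FusionIdentity} as a trigonometric identity on $T$. The key observation is the factorization
$$\Theta_\lambda\cdot\bigl(1 - 2t\cos 2\vartheta + t^2\bigr) = \chi_\lambda - t\,\chi_{\lambda-2},$$
where $\chi_\lambda = \sin[(\lambda+1)\vartheta]/\sin\vartheta$ is the ordinary $SU(2)$ character (with the conventions $\chi_{-1}=0$, $\chi_{-2}=-1$), and $|1-te^{2i\vartheta}|^2 = 1 - 2t\cos 2\vartheta + t^2$. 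Multiplying both sides of \eqref{FusionIdentity} by this denominator squared reduces the claim to the polynomial identity
$$\bigl(\chi_{\lambda_1} - t\chi_{\lambda_1-2}\bigr)\bigl(\chi_{\lambda_2} - t\chi_{\lambda_2-2}\bigr) = \bigl(1 - 2t\cos 2\vartheta + t^2\bigr)\sum_{\nu\in\Lambda_k} f^{\lambda_1\lambda_2\nu}d_\nu\bigl(\chi_\nu - t\chi_{\nu-2}\bigr).$$

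This identity can then be verified by expanding the left-hand side using the classical Clebsch--Gordan rule $\chi_a\chi_b = \sum_{c}\chi_c$ (where $c$ runs from $|a-b|$ to $a+b$ in steps of $2$) and comparing the coefficient of each $\chi_n$ on both sides as polynomials in $t$. The parity constraint ``$\lambda_1+\lambda_2+\lambda_3$ even'' in \eqref{Fusion} emerges automatically from Clebsch--Gordan; when $\Delta\lambda \leq 0$, the product sits inside the level-$k$ fusion window and all four factors $\chi_{\lambda_i-\epsilon_i}$ recombine to yield precisely $f^{\lambda_1\lambda_2\nu} = 1$; when $\Delta\lambda > 0$, the shift $\chi_{\lambda-2}$ together with the denominator factor $(1-2t\cos2\vartheta+t^2)$ accounts for the surplus through extra powers of $t$.

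The principal obstacle will be the bookkeeping at the boundary weights $\nu \in \{0, k\}$, where $d_\nu = 1/(1-t^2)$ rather than $1/(1-t)$, and in enforcing the level-$k$ truncation. Both issues are handled by the affine reflection identity $\chi_{2k+2-\nu} = -\chi_\nu$: terms in the naive Clebsch--Gordan expansion that correspond to $\nu > k$ are reflected back into $\Lambda_k$ with a sign, and their contribution combines with $d_0>0$ in $\Delta\lambda$ to produce the factor $t^{\Delta\lambda/2}$. Once these reflections and cancellations are tracked, the remaining verification splits into the four disjoint cases governed by which of $d_0, d_1, d_2, d_3$ achieves the maximum, each of which reduces to a short telescoping identity among the $\chi_n$'s.
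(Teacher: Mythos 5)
There is a genuine gap, and it sits at the heart of your strategy. After clearing denominators you propose to verify
$$\bigl(\chi_{\lambda_1}-t\chi_{\lambda_1-2}\bigr)\bigl(\chi_{\lambda_2}-t\chi_{\lambda_2-2}\bigr)=\bigl(1-2t\cos2\vartheta+t^2\bigr)\sum_{\nu}f^{\lambda_1\lambda_2\nu}d_\nu\bigl(\chi_\nu-t\chi_{\nu-2}\bigr)$$
``as a trigonometric identity on $T$,'' by expanding via Clebsch--Gordan and comparing coefficients of each $\chi_n$. But \eqref{FusionIdentity} is \emph{not} an identity of functions on $T$; it holds only at the points of $F^{\mathrm{reg}}_{\rho,t}$. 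Already at $t=0$ with $\lambda_1=\lambda_2=k$ the left side is $\chi_k^2=\sum_{c=0,2,\dots,2k}\chi_c$ while the right side is $\chi_0=1$, so the two sides differ as functions on $T$ and ``comparing coefficients of $\chi_n$'' is not an available move (and once you restrict to the finite set $F^{\mathrm{reg}}_{\rho,t}$, the $\chi_n$ are no longer linearly independent, so the move is not available there either). The mechanism you invoke to repair this --- the affine reflection identity $\chi_{2k+2-\nu}=-\chi_\nu$ --- is the \emph{undeformed} relation, valid at the classical points $\vartheta=\pi j/(k+2)$ of $F^{\mathrm{reg}}_{\rho}$ but false at the points of $F^{\mathrm{reg}}_{\rho,t}$ for $t\neq 0$: those points satisfy instead the $t$-deformed equation \eqref{BAESU2}, $e^{2i(k+2)\vartheta}\bigl(\tfrac{1-te^{-2i\vartheta}}{1-te^{2i\vartheta}}\bigr)^2=1$, so the reflection of $\chi_{2k+2-\nu}$ back into the alcove picks up the $t$-dependent factor $\bigl(\tfrac{1-te^{\mp 2i\vartheta}}{1-te^{\pm 2i\vartheta}}\bigr)^2$ rather than a bare sign. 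Without using \eqref{BAESU2} you cannot produce the powers $t^{\Delta\lambda/2}$ in \eqref{Fusion} nor the corrected $d_\nu=1/(1-t^2)$ at $\nu\in\{0,k\}$, and the ``telescoping identities'' will not close.

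For comparison, the paper's proof works pointwise on $F^{\mathrm{reg}}_{\rho,t}$: it splits the sum $\sum_\nu f^{\lambda_1\lambda_2\nu}d_\nu(\chi_\nu-t\chi_{\nu-2})$ into three geometric series $X,Y,W$ according to the three regimes of $f$, sums them in closed form, and then uses \eqref{BAESU2} twice --- once to show the boundary term $t^{(k+1-\delta_2)/2}e^{i(k+2)\vartheta}\tfrac{1-te^{-2i\vartheta}}{1-te^{2i\vartheta}}$ is real and cancels against its conjugate, and once more to recombine $e^{i(\delta_2+2)\vartheta}$ into $e^{i(\lambda_1+\lambda_2+2)\vartheta}$ --- before matching the result with $\Theta_{\lambda_1}\Theta_{\lambda_2}$. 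Your opening reduction via the first orthogonality relation of Theorem~\ref{DC} to the claim $d_{0,3}(\lambda_1,\lambda_2,\nu)=f^{\lambda_1\lambda_2\nu}$ is correct (and not circular, since Theorem~\ref{DC} rests on Theorem~\ref{tqftp}), but you then abandon it, and the route you substitute does not go through. To fix the argument, keep your factorization $\Theta_\lambda\cdot|1-te^{2i\vartheta}|^2=\chi_\lambda-t\chi_{\lambda-2}$ but carry out the resummation at a point of $F^{\mathrm{reg}}_{\rho,t}$ and invoke \eqref{BAESU2} where the level-$k$ boundary terms appear.
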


\begin{proof}
If we define
$$
\delta_1=|\lambda_1-\lambda_2|,\quad \delta_2=\mathrm{min}(\lambda_1+\lambda_2,2k-\lambda_1-\lambda_2)
$$
one can rewrite \eqref{Fusion} as 
$$
f^{\lambda_1\lambda_2\nu}=\left\{\begin{array}{ll}
t^{(\delta_1-\nu)/2} & \textrm{if $\nu\leq\delta_1$,}\\
1 & \textrm{if $\delta_1\leq\nu\leq\delta_2$, }\\
t^{(\nu-\delta_2)/2} & \textrm{if $\nu\geq\delta_2$,}\end{array}
\right.
$$
when $\lambda_1+\lambda_2+\nu$ is even.

We first look at the case with $\lambda_1+\lambda_2$ being odd and $k$ being even. Using the explicit form \eqref{DeformedSU2} of $\Theta_\nu$ for $SU(2)$, the right-hand side of \eqref{FusionIdentity} can be decomposed into three parts
$$
\sum_{\nu\in \Lambda_k} f^{\lambda_1\lambda_2\nu}d_{\nu}\Theta_{\nu}=\frac{1}{(1-t)\sin\vartheta|1-te^{2i\vartheta}|^2}(X+Y+W)
$$
with\footnote{When $\delta_2=\delta_1+2$, $Y=0$. And when $\delta_2=\delta_1$, 
$$
Y=-\left[\sin(\delta_1+1)\vartheta-t\sin(\delta_1-1)\vartheta\right].
$$
The algebraic manipulation we will perform automatically take into account these special situations.
 }
\begin{align}\nonumber
X=&\sum_{\nu=1}^{\delta_1} t^{(\delta_1-\nu)/2}\cdot \left[\sin(\nu+1)\vartheta-t\sin(\nu-1)\vartheta\right],\\ \label{ABCSum}
Y=&\sum_{\nu=\delta_1+2}^{\delta_2-2}\left[\sin(\nu+1)\vartheta-t\sin(\nu-1)\vartheta\right],\\ \nonumber
W=&\sum_{\nu=\delta_2}^{k-1}t^{(\nu-\delta_2)/2}\left[\sin(\nu+1)\vartheta-t\sin(\nu-1)\vartheta\right],
\end{align}
where $\nu$ takes value in odd integers in all summations.

Rewriting the sine functions using exponentials and summing the geometric series give
\begin{align} \nonumber
X=&\sin(\delta_1+1)\vartheta,\\\label{ABCResult}
Y=&\frac{1}{2i}\frac{1-te^{-2i\vartheta}}{1-e^{2i\vartheta}}\cdot\left(e^{i(\delta_1+3)\vartheta}-e^{i(\delta_2+1)\vartheta}\right)+\text{c.c.},\\ \nonumber
W=&\frac{1}{2i}\frac{1-te^{-2i\vartheta}}{1-te^{2i\vartheta}}\cdot\left(e^{i(\delta_2+1)\vartheta}-t^{(k+1-\delta_2)/2}e^{i(k+2)\vartheta}\right)+\text{c.c.}\\ \nonumber
=&\frac{1}{2i}\frac{1-te^{-2i\vartheta}}{1-te^{2i\vartheta}}\cdot e^{i(\delta_2+1)\vartheta}+\text{c.c.},
\end{align}
where we have used the fact that
$$
e^{i(k+2)\vartheta}\left(\frac{1-te^{-2i\vartheta}}{1-te^{2 i \vartheta}}\right)=\pm 1
$$
is equal to its complex conjugate to simplify the expression for $W$. Adding all the three terms gives
\begin{align*}
X+Y+W=&\frac{(1-t)\cos(\delta_1\vartheta)}{2\sin\vartheta}-\frac{(1-t)}{4\sin\vartheta}\left(e^{i(\delta_2+2)\vartheta}\cdot\frac{1-te^{-2i\vartheta}}{1-te^{2i\vartheta}}+\text{c.c.}\right)\\
=&\frac{(1-t)\cos(\lambda_1-\lambda_2)\vartheta}{2\sin\vartheta}-\frac{(1-t)}{4\sin\vartheta}\left(e^{i(\lambda_1+\lambda_2+2)\vartheta}\cdot\frac{1-te^{-2i\vartheta}}{1-te^{2i\vartheta}}+\text{c.c.}\right),
\end{align*}
where we have again used the equation \eqref{BAESU2} to rewrite the second term. Now it can be straightforwardly checked that
$$
\frac{1}{(1-t)\sin\vartheta|1-te^{2i\vartheta}|^2}(A+B+C)=\Theta_{\lambda_1}(\vartheta)\Theta_{\lambda_2}(\vartheta),
$$
proving the proposition for even $k$ and odd $\lambda_1+\lambda_2$. The other cases are almost exactly the same. For example, when $\lambda_1+\lambda_2$ is even, the term coming from $\nu=0$ need to be treated specially, and $X$ in \eqref{ABCSum} is now
$$
X=t^{\delta_1/2}\sin\vartheta+ \sum_{\nu=2}^{\delta_1} t^{(\delta_1-\nu)/2}\cdot \left[\sin(\nu+1)\vartheta-t\sin(\nu-1)\vartheta\right].\\
$$
But one can easily verify that this still gives the expression for $X$ in \eqref{ABCResult}. Similarly, the term in $W$ coming from $\nu=k$ also need to be treated differently when $\lambda_1+\lambda_2+k$ is even. But one can check in all cases that \eqref{ABCResult} is always correct as long as $k>0$, proving the proposition.
\end{proof}

When $k=0$, although the vanishing theorem for higher cohomology groups no longer applies, the index formula still defines a 2D TQFT, with a one-dimensional Hilbert space $V$. The TQFT structure can be easily worked out. They are given by 
$$
\eta^{00}=1+t
$$
and
$$
f^{000}=1.
$$
There is only one deformed character given by 
$$
\Theta_0=\frac{1}{1+t},
$$
and
$w_\emptyset$ is now
$$
w_\emptyset=(1+t)w_0.
$$
The partition function on a genus-$g$ Riemann surface is given by
$$
d_g=(1+t)^{3-3g}.
$$

In the remainder of this appendix, we check
$$
d_{0,4}^{\lambda_1\lambda_2\lambda_3\lambda_4}=f^{\lambda_1\lambda_2\nu}f^{\lambda_3\lambda_4\rho}\eta_{\nu\rho}.
$$
From this expression, it is clear that the sum of $\lambda$'s has to be an even number in order for $d$ to be non-zero. In addition to $\delta_1$ and $\delta_2$, we also define
$$
\delta_3=|\lambda_3-\lambda_4|,\quad \delta_4=\mathrm{min}(\lambda_3+\lambda_4,2k-\lambda_3-\lambda_4),
$$
And it is convenient to define
\begin{align*}
L_1&=\mathrm{min}(\delta_1,\delta_3),\\
L_2&=\mathrm{max}(\delta_1,\delta_3),\\
L_3&=\mathrm{min}(\delta_2,\delta_4),\\
L_4&=\mathrm{max}(\delta_2,\delta_4).
\end{align*}
There are two different possibilities about values of $L_i$'s. We may have
$$
L_1\leq L_2\leq L_3\leq L_4
$$
or
$$
L_1\leq L_3\leq L_2\leq L_4,
$$
which respectively correspond to either the moduli space of parabolic bundles being non-empty or empty. We start with the first situation and assume that $\lambda_1+\lambda_2$ is even, then the $L$'s are all even. We also assume that $k$ is even. Now we break the summation into different parts
$$
d_{0,4}^{\lambda_1\lambda_2\lambda_3\lambda_4}=\sum_{\nu=0,2,4\ldots k}f^{\lambda_1\lambda_2\nu}f^{\lambda_3\lambda_4\nu}\eta_{\nu\nu}=A+B+C+D+E+F+G,
$$
where
$$
A=\frac{t^{(L_1+L_2)/2}}{1-t^2}
$$
is the contribution of $\nu=0$,
\begin{align*}
B&=\frac{1}{1-t}\sum_{\nu=2}^{L_1}t^{(L_1+L_2)/2-\nu},\\
C&=\frac{1}{1-t}\sum_{\nu=L_1+2}^{L_2}t^{(L_2-\nu)/2},\\
D&=\frac{(L_3-L_2)/2-1}{1-t}
\end{align*}
comes from $\nu=L_2+2,L_2+4,\ldots,L_3-2$,
\begin{align*}
E=&\frac{1}{1-t}\sum_{\nu=L_3}^{L_4-2}t^{(\nu-L_3)/2},\\
F=&\frac{1}{1-t}\sum_{\nu=L_4}^{k-2}t^{\nu-(L_3+L_4)/2},
\end{align*}
and finally
$$
G=\frac{t^{k-(L_3+L_4)/2}}{1-t^2}
$$
comes from $\nu=k$. After performing summation of various geometric series, we have:
\begin{align*}
B&=\frac{1}{1-t}\cdot\frac{t^{(L_2-L_1)/2}-t^{(L_2+L_1)/2}}{1-t^2},\\
C&=\frac{1-t^{L_2-L_1}}{(1-t)^2},\\
E&=\frac{1-t^{L_4-L_3}}{(1-t)^2},\\
F&=\frac{1}{1-t}\cdot\frac{t^{(L_4-L_3)/2}-t^{k-(L_3+L_4)/2}}{1-t^2}.
\end{align*}
Adding all the seven pieces up gives
\begin{multline}\label{EVerlinde3}
d_{0,4}=\frac{(L_3-L_2)/2+1}{1-t}+\frac{2t}{(1-t)^2}+\\ \frac{t^{(L_1+L_2)/2}+t^{(L_2-L_1)/2}+t^{k-(L_3+L_4)/2}+t^{(L_4-L_3)/2}}{(1-t^{-1})(1-t^2)}.
\end{multline}
Although we have assume that $k$ is even and $\lambda_1+\lambda_2$ is even, one can verify that equation (\ref{EVerlinde3}) is completely general. For example, if $k$ is odd and $\lambda_1+\lambda_2$ is even, then $A$, $B$, $C$, $D$, $E$ will be unchanged, while $G=0$ and
$$
F=\frac{1}{1-t}\sum_{\nu=L_4}^{k-1}t^{\nu-(L_3+L_4)/2}=\frac{1}{1-t}\cdot\frac{t^{(L_4-L_3)/2}-t^{k-(L_3+L_4)/2+1}}{1-t^2}.
$$
As a consequence, if we add all terms up, the coefficient of $t^{k-(L_3+L_4)/2}$ is still $\frac{1}{(1-t^2)(1-t^{-1})}$. Similarly, if $\lambda_1+\lambda_2$ is odd and $k$ is odd, then $A$ becomes zero but $B$ is changed to compensate for the vanishing of $A$, so that the final expression stays the same. If $\lambda_1+\lambda_2$ is odd while $k$ is even, $A$ and $G$ are both zero while $B$ and $F$ are changed and final expression still remains as (\ref{EVerlinde3}). 

When $t=0$, and $L_2\le L_3$
$$
\tilde{d}_{0,4}=(L_3-L_2)/2+1.
$$
So in this case \eqref{EVerlinde3} is exactly the same as \eqref{FourPuncture} with $h=0$ and 
\begin{align*}
h_1&=\frac{1}{2k}(2k-L_3-L_4)\\
h_2&=\frac{1}{2k}(L_4-L_3)\\
h_3&=\frac{1}{2k}(L_1+L_2)\\
h_4&=\frac{1}{2k}(L_2-L_1).
\end{align*}
The case with $L_2>L_3$ is very analogous, where various geometric series will start with $t^{h_0}$ instead of 1, with
$$
h_0=\frac{1}{2k}(L_2-L_3).
$$
Compared to the $L_2\leq L_3$ situation, only $C$, $D$ and $E$ in the seven-term decomposition are changed. They are now
\begin{align*}
C&=\frac{1}{1-t}\sum_{\nu=L_1+2}^{L_3}t^{(L_2-\nu)/2},\\
D&=t^{(L_2-L_3)}\frac{(L_2-L_3)/2-1}{1-t},\\
E&=\frac{1}{1-t}\sum_{\nu=L_2}^{L_4-2}t^{(\nu-L_3)/2}.
\end{align*}
So in this case we also have that
\be\label{EVerlinde5}
d_{0,4}=t^{kh_0}\left[\frac{(L_2-L_3)/2+1}{1-t}+\frac{2t}{(1-t)^2}\right]+\frac{\sum_{i=1,2,3,4}t^{kh_i}}{(1-t^{-1})(1-t^2)}.
\ee

\end{document}